\newtheorem{theorem}{Theorem}
\numberwithin{theorem}{section}
\newtheorem{lemma}[theorem]{Lemma}
\newtheorem{proposition}[theorem]{Proposition}
\newtheorem{corollary}[theorem]{Corollary}
\theoremstyle{definition}
\newtheorem{definition}[theorem]{Definition}
\newtheorem{problem}[theorem]{Problem}
\theoremstyle{remark}
\numberwithin{equation}{section}
\DeclareMathOperator{\sgn}{sgn}
\DeclareMathOperator{\degree}{deg}
\DeclareMathOperator{\dist}{dist}
\newcommand{\basilgroup}{B}
\newcommand{\tlambda}{\tilde{\lambda}}
\begin{document}
	
\title{Spectral properties of graphs associated to the Basilica group}

\author[Brzoska, George, Jarvis, Rogers, Teplyaev]{Antoni Brzoska, Courtney George, Samantha Jarvis, Luke~G. Rogers, Alexander Teplyaev}

\address{Department of Mathematics,
	University of Connecticut,
	Storrs, CT 06269-1009 USA}          
\thanks{Research supported in part by NSF grants 1659643  and 1613025.}                          
\email{\ 
	\newline
antoni.brzoska@uconn.edu
\newline
courtney.george@ucr.edu
\newline
sjarvis@gradcenter.cuny.edu
\newline
luke.rogers@uconn.edu
\newline
alexander.teplyaev@uconn.edu
	}          
\urladdr{https://math.uconn.edu/person/luke-rogers/	
\newline
https://math.uconn.edu/person/alexander-teplyaev/	
	}
\keywords{Orbital Schreier graphs, 
self-similar Basilica group, 
iterated monodromy group automata group,
graph Laplacian,
infinitely many gaps,
pure point spectrum, 
localized eigenfunctions. }                                   
\subjclass[2010]{
28A80, 
(05C25, 
05C50, 
20E08, 
31C25, 
37A30, 
37B15, 
37F10, 
60J10, 
81Q35). 
}                                  
\begin{abstract}
We provide the foundation of the spectral analysis of the Laplacian on the orbital Schreier graphs of the Basilica group, the iterated monodromy group   of the quadratic polynomial $z^2-1$. This group is an  important example in the class of self-similar amenable but not elementary amenable finite automata groups studied by Grigorchuk, \.Zuk, \v Suni\'c, Bartholdi, Vir\'ag, Nekrashevych, Kaimanovich, Nagnibeda et al. We prove that the spectrum of the Laplacian has infinitely many gaps and that the support of the KNS Spectral Measure is a Cantor set. Moreover, on a generic blowup, the spectrum coincides with this Cantor set, and is pure point with localized eigenfunctions and eigenvalues located at the endpoints of the gaps. 
\end{abstract} 
\date{\today}
\maketitle

\section{Introduction}

The Basilica group is a well studied example of a self-similar automata group. 
It has interesting algebraic properties, 
for which we refer to the work of  Grigorchuk and \.Zuk, who introduced the group  in~\cite{MR1902367} and studied some of its spectral properties in~\cite{MR1929716}, and of Bartholdi and Vir\'ag~\cite{MR2176547}, who proved that it is amenable but not sub-exponentially amenable.  However the spectral properties of the Basilica group do not seem to be fully accessible by using the techniques introduced in the foundational papers~\cite{BG2000a,BG2000b}.  By the work of Nekrashevych~\cite{Nekbook} the Basilica group is an iterated monodromy group and has as its limit set the Basilica fractal, which is the Julia set of $z^2-1$.  The resistance form and Laplacian on this fractal were introduced and studied in~\cite{RogTep}, where it was proved that the spectral dimension $d_s$ of the Basilica fractal is equal to~$\frac43$. 
In this paper we combine an array of tools from various areas of mathematics to study the spectrum of the orbital Schreier graphs of the Basilica group. Our work is strongly motivated by recent results of Grigorchuk, Lenz,  and Nagnibeda, see \cite[and references therein]{GLN1,GLN2}. Our results are closely related to the new substantial work 
\cite{dang2020self} by Dang,  Grigorchuk,   and Lyubich. In particular, our Corollary~\ref{cor-key} should be compared to 
\cite[Remark 1.3]{dang2020self} and, we hope, will provide a foundation for further study related to the recent preprints \cite{grigorchuk2020integrable,grigorchuk2020spectra}. 

As for self-similar groups in general, a great deal of the analysis of the Basilica group rests on understanding the structure of its Schreier graphs and their limits.     
Many  properties of such graphs were obtained by D'Angeli, Donno, Matter and Nagnibeda~\cite{nagnibeda}, 
including a classification of the orbital Schreier graphs, which are limits of finite Schreier graphs in the pointed Gromov-Hausdorff sense.
In the present work we consider spectral properties of some graphs obtained by a simple decomposition of the Schreier graphs.  
These graphs may still be used to analyze most orbital Schreier graphs.  

Our main results include construction of a dynamical system for the spectrum of the Laplacian on Schreier graphs that gives an explicit formula for the multiplicity of eigenvalues and a geometric description of the supports of the corresponding eigenfunctions, associated formulas for the proportion of the KNS spectral measure on orbital Schreier graphs that is associated to eigenvalues for each of the finite approximation Schreier graphs, and a proof that the spectra of orbital Schreier graphs contain infinitely many gaps and no intervals.  We also show that the Laplacian spectrum for a large class of orbital Schreier graphs is pure point. 

The paper is arranged as follows:
\begin{itemize}
	\item \mbox{In Section~\ref{sec-Gamma-Gn}} we introduce the Basilica group, its Schreier graphs $\Gamma_n$ and their Laplacians. We then make a simple decomposition of $\Gamma_n$ to introduce graphs  $G_n$ which will be more tractable in our later analysis. The main result of Section~\ref{sec-Gamma-Gn}, Theorem~\ref{thm:orbitalSareblowups}, is that moving from $\Gamma_n$ to $G_n$ is of little significance for the limiting structures.  Specifically we show that, with one exception, all isomorphism classes of orbital Schreier graphs of the Basilica group are also realized as infinite blowups of the graphs $G_n$.  Conversely, all blowups of $G_n$, except those with boundary points, are orbital Schreier graphs of the Basilica group.

\item
\mbox{In Section~\ref{section:dynamics}} we give a dynamical description of the spectrum of $G_n$ which reflects the self-similarity in its construction. It should be noted that a different dynamical system for the spectrum of the Basilica group was obtained some time ago in~\cite{MR1929716} by another method, but we do not know whether it is possible to do our subsequent analysis for that system. Subsection~\ref{subseccharpoly}  introduces our first recursion for characteristic polynomials of the Laplacian. 
Subsection~\ref{subsec:locefns} describes localized eigenfunctions and Theorem~\ref{thm:factorizationofcn} provides a factorization of the characteristic polynomial for the $G_n$ Laplacian. 
In particular, Theorem~\ref{thm:factorizationofcn} counts eigenvalues that are introduced in earlier levels of the construction of the structure, and describe  their multiplicities by using geometric features of the graphs. These geometric features represent local symmetries and correspond to the number of ``copies'' of localized and non-localized eigenfunctions. 
The recursive dynamics of these factors is considered in more detail in Subsection~\ref{subsec-dyn-gamma-n}, where we find in Corollary~\ref{cor:rootdynamics} that a vastly simpler dynamics is valid for a rational function having roots at the eigenvalues for $G_n$ that are not eigenvalues of any earlier $G_k$, $k<n$, and poles at the latter values with specified multiplicities.  This simpler dynamics is crucial in our later work because it is susceptible to a fairly elementary and direct analysis.
  
\item In  Section~\ref{sec-KMS}, Theorem~\ref{thm:howtogetmostofspect}, we prove an approximation result for the Kesten--von-Neuman--Serre (KNS) spectral measure of a blowup $G_\infty$ of the graphs $G_n$, which is a version of the integrated density of states. For details of this measure we refer to~\cite{GZ04}. 

\item
\mbox{In Section~\ref{section:gaps}} we prove the existence of {\em gaps}, which are intervals that do not intersect the spectum of the Laplacian for any of the graphs $G_n$, and show that for each $\lambda$ in the spectrum of the Laplacian for some $G_n$ there are a sequence $k_j$ and spectral values for the Laplacian on $G_{n+2k_j}$ that accumulate at $\lambda$,  see Theorem~\ref{thm:valuesandgaps}.  It follows readily that the support of the KNS spectral measure is a Cantor set.

\item
\mbox{In Section~\ref{sec-pp}} we use the approach developed in \cite{Teplyaevthesis,MT} to show that a generic set of  blowups of the graphs $G_n$, or equivalently a generic set of orbital Schreier graphs, have pure point spectrum, see Theorem~\ref{thm:pureptspect}. It follows that the spectrum of the natural Markov operator on the blowup, which is sometimes called the Kesten spectrum, coincides with the Cantor set that forms the support of the KNS spectral measure.
\end{itemize}

The motivation for our work comes from three sources. 
First, we are interested to develop methods that provide more information about certain self-similar groups, see the references given above and \cite{Kaimanovich09,Kaimanovich05,Kaimanovich03,NT,BKM}. 
Second, we are interested to develop new methods in spectral analysis on fractals. 
Our work gives one of the first results available in the literature 
that gives precise information about the spectrum of a graph-directed self-similar structure, 
making more precise the asymptotic analysis in \cite{HN}. 
For related results in self-similar setting, see  \cite{Teplyaevthesis,vib,HSTZ,MR3725301,MR2746525,MR2608413,MR2727362,MR2448573,MR2150975,MR1990573,MR1284792,MR1245223,Shima,j3,j3,j1,Sabot1,Sabot2,Sabot3,Sabot4}.
One can hope that spectral analysis of the Laplacian on  Schreier graphs in some sense can provide a basis for harmonic analysis on self-similar groups, following ideas of \cite{Sharmonic,W}. 
Third, our motivation comes from the  works in physics and probability dealing with various spectral oscillatory phenomena  
\cite[and references therein]{A,D,G,k1,k2,k3,ADT}.  
In general terms, our results are a part of the study of the systems with aperiodic order, 
see \cite[and references therein]{d3,d2,d1,AJ}.

\subsection*{Acknowledgments}
The last two authors thank 
Nguyen-Bac Dang, Rostislav Grigorchuk,  Mikhail Lyubich,
Volodymyr Nekrashevych, 
Tatiana Smirnova-Nagnibeda, and 
Zoran \v Suni\'c  
for helpful and interesting discussions. 

\section{The graphs $\Gamma_n$ and $G_n$ and their Laplacians}\label{sec-Gamma-Gn}

\subsection{The Basilica group and its Schreier graphs}

Let $T$ be the binary rooted tree.  We  write its vertices as finite words $v\in\{0,1\}^*:=\cup_{n=0}^\infty \{0,1\}^n$;  a vertex $v=v_1\dotsm v_n$ is said to be of level $n$, and by convention $\{0,1\}^0=\{\emptyset\}$ is the null word. The edges containing the vertex $v=v_1\dotsm v_n$ go to the children $v0$, $v1$ and the parent $v_1\dotsm v_{n-1}$.  Evidently a tree automorphism of $T$ preserves the levels of vertices.  The set of right-infinite words, which may be considered to be the boundary of $T$, is written as $\{0,1\}^\omega=\partial T$.

The Basilica group is generated by an automaton.  There is a rich theory of automata and automatic groups, for which we refer to the expositions in~\cite{Nekbook,BartGrigNek}.  For the Basilica the automaton is a quadruple consisting of a set of states $\mathcal{S}=\{e,a,b\}$ (where $e$ means identity), the alphabet $\{0,1\}$, a transition map $\tau:\mathcal{S}\times\{0,1\}\to\mathcal{S}$ and an output map $\rho:\mathcal{S}\times\{0,1\}\to\{0,1\}$.  It is standard to present the automaton by using a Moore diagram, given in Figure~\ref{Moorediag}, which is a directed graph with vertex set $\mathcal{S}$ and arrows for each $(s,j)$, $j\in\{0,1\}$ that point from $s$ to $\tau(s,j)$ and are labelled with $j|\rho(s,j)$.

\begin{figure}
\centering
\begin{tikzpicture}[->, >=stealth', shorten >=1pt,
auto,node distance=1cm and 5cm,on grid,semithick,
inner sep=2pt,bend angle=45]
\node[state] (id) {$id$};
\node[state] (a) [above left=of id] {$a$};
\node[state] (b) [below left = of id] {$b$};
\path[every node/.style={font=\footnotesize}]
	(a) 	edge[bend right] node[left] {$0|0$} (b)
		edge[bend left] node {$1|1$} (id)
	(b)	edge[bend right] node[right] {$0|1$}  (a)
		edge[bend right] node[below] {$1|0$} (id)
	(id)   edge[loop right] node {$0|0$ and $1|1$} ();
\end{tikzpicture}
\caption{The Moore diagram for the Basilica group automaton.}
\label{Moorediag}
\end{figure}
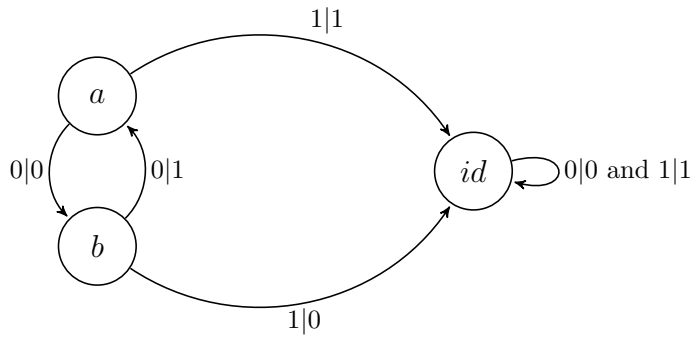

The automaton defines, for each $s\in\mathcal{S}$, self maps $\mathcal{A}_s$ of $\{0,1\}^*$ and $\{0,1\}^\omega$ (i.e. $T$ and $\partial T$) by reading along the word from the left and altering one letter at a time. Specifically, given a state $s$ and a word $v=v_1v_2v_3\dotsm$ (which may be finite or infinite), the automaton ``reads'' the letter $v_1$, writes $\rho(s,v_1)$, moves one position to the right and ``transitions'' to state $\tau(s,v_1)$, which then reads $v_2$, and so forth. Observe that these $\mathcal{A}_s$ are tree automorphisms of $T$.  The Basilica group is the group of automorphisms of $T$ generated by the $\mathcal{A}_s$ with $s\in\mathcal{S}$.

Classically, a Schreier graph of a group $\basilgroup$  is defined from a generating set $S$ and a subgroup $H$ by taking the vertices to be the left cosets $\{gH:g\in\basilgroup\}$ and the edges to be of the form $(gH,sgH)$ for $s\in S$.  In the case that $\basilgroup$ acts transitively on a set $\tilde{T}$ one takes $H$ to be the stabilizer subgroup of an element; this subgroup depends on the element, but the Schreier graphs are isomorphic.  Moreover, one may then identify cosets  of $H$ with elements of $\tilde{T}$, at which point the Schreier graph can be thought to have vertex set $\tilde{T}$ and edges $\bigl\{\{v,sv\}:v\in\tilde{T},s\in S\setminus\{e\}\bigr\}$.  Note that we remove the identity from $S$ to avoid unnecessary loops, and that the Schreier graphs considered in this paper have undirected edges.

The Basilica group is transitive on levels of the binary tree $T$, so we may define a Schreier graph for each level by the above construction.  Removing the identity from $\mathcal{S}$ we take the generating set to be $S=\{A_a,A_b\}$. More precisely, the $n^\text{th}$~Schreier graph $\Gamma_n$ of the Basilica group has vertices the words $\{0,1\}^n$ and (undirected) edges between pairs of words $w$, $w'$ for which $\mathcal{A}_a(w)=w'$ or $\mathcal{A}_b(w)=w'$; it is often useful to label the edge with $a$ or $b$ to indicate the associated generator.

The action of $\basilgroup$ on the boundary $\partial T$ is not transitive, but for each $v\in\partial T$ we may take the Schreier graph defined on the orbit of $v$, which is just that of the stabilizer subgroup of $\basilgroup$ at $v$. This is called the orbital Schreier graph $\Gamma_v$.  If the length $n$ truncation of $v$ is denoted  $[v]_n$  then the sequence of pointed finite Schreier graphs $(\Gamma_n,[v]_n)$ converges in the pointed Gromov-Hausdorff topology to $(\Gamma_v,v)$.  One description of this convergence is to define the distance between pointed graphs $(\Gamma',x'),(\Gamma'',x'')$ as follows:
\begin{equation}\label{eq:pGHdist}
\dist_{pGH}\bigl((\Gamma',x'),\Gamma'',x'')\bigr)= \inf \Bigl\{ \frac1{r+1}:  B_{\Gamma'}(x',r) \text{ is graph isomorphic to } B_{\Gamma''}(x'',r)\Bigr\}.
\end{equation}
A classification of the orbital Schreier graphs of the Basilica group is one main result of~\cite{nagnibeda}.

It is helpful to understand the relationship between the Schreier graphs for different levels.  To see it, we compute for a finite word $w$ that $a(1w)=1e(w)=1w$ and $a(0w)=0b(w)$, while  $b(1w)=0e(w)=0w$  and $b(0w)=1a(w)$.  This says that at any word beginning in $1$ there is an $a$-self-loop and every pair $\{1w,0w\}$ is joined by a $b$-edge.  It also says that if there is a $b$-edge $\{w,b(w)\}$ at scale $n$ then there is an $a$-edge $\{0w,0b(w)\}$ at scale $(n+1)$, if there is an $a$-edge $\{w,a(w)\}$ at scale $n$ there is a $b$-edge $\{0w,1a(w)\}$ at scale $n+1$, and if there is an $a$-loop at~$w$ there are two $b$-edges between~$0w$ and~$1w$.  With a little thought one sees that these may be distilled into a set of replacement rules for obtaining $\Gamma_{n+1}$ from $\Gamma_n$.  Each $b$-edge in $\Gamma_n$ becomes an $a$-edge in $\Gamma_{n+1}$, an $a$-loop at~$1w$ becomes two $b$-edges between~$01w$ and~$11w$, and an $a$-edge, which can only be between words $0w,0b(w)$, becomes $b$-edges from $10b(w)$ to both $00w$ and $00b(w)$; $a$-loops are also appended at words beginning in~$1$.  These replacement rules are summarized in Figure~\ref{replacementwholeSchreier} and may be used to construct any $\Gamma_n$ iteratively, beginning with $\Gamma_1$, which is shown along with $\Gamma_2$ and $\Gamma_3$ in Figure~\ref{fig:gammagraphs}.  For a more detailed discussion of these rules see  Proposition~3.1 in \cite{nagnibeda}.

\begin{figure}
\centering
\begin{tikzpicture}
\draw   (0,2) node[circle,fill,inner sep=2pt,label=below:$w$](tlb){} -- node[above]{$b$}  (3,2) node[circle,fill,inner sep=2pt,label=below:$z$](trb){};
\draw  (5,2) node[circle,fill,inner sep=2pt,label=below:$0w$](tla){} -- node[above]{$a$}  (8,2) node[circle,fill,inner sep=2pt,label=below:$0z$](tra){};
\draw   (0,0) node[circle,fill,inner sep=2pt,label=below:$0w$](bla){} -- node[above]{$a$}  (3,0) node[circle,fill,inner sep=2pt,label=below:$0z$](bra){};
\path  (5,0) node[circle,fill,inner sep=2pt,label=below:$00w$](blbb){}  (6.5,0) node[circle,fill,inner sep=2pt,label=above:$10z$](bmbb){}  (8,0) node[circle,fill,inner sep=2pt,label=below:$00z$](brbb){};
\draw (blbb)--node[above]{$b$} (bmbb);
\draw (bmbb)--node[above]{$b$} (brbb);
\path (1.5,-2) node[circle,fill,inner sep=2pt,label=below:$1w$](a){} (6.5,-2) node[circle,fill,inner sep=2pt,label=below:$11w$](abb){} (8,-2) node[circle,fill,inner sep=2pt,label=below:$01w$](bb){} ;
\draw (abb) to [bend right] node[below]{$b$} (bb);
\draw (abb) to [bend left] node[above]{$b$} (bb);
\path[-, every loop/.style= {looseness=10, distance=20, in=300,out=240}]   (bmbb) edge [loop below]  node{$a$} ();
\path[-, every loop/.style= {looseness=10, distance=20, in=150,out=210}]   (a) edge [loop left]  node{$a$} ();
\path[-, every loop/.style= {looseness=10, distance=20, in=150,out=210}]   (abb) edge [loop left]  node{$a$} ();

\path[->,thick] (3.5,2) node{} edge [bend left] (4.5,2) node{};
\path[->,thick] (3.5,0) node{} edge [bend left] (4.5,0) node{};
\path[->,thick] (3.5,-2) node{} edge [bend left] (4.5,-2) node{};
\end{tikzpicture}
\caption{Replacement Rules for $\Gamma_n$}
\label{replacementwholeSchreier}
\end{figure}
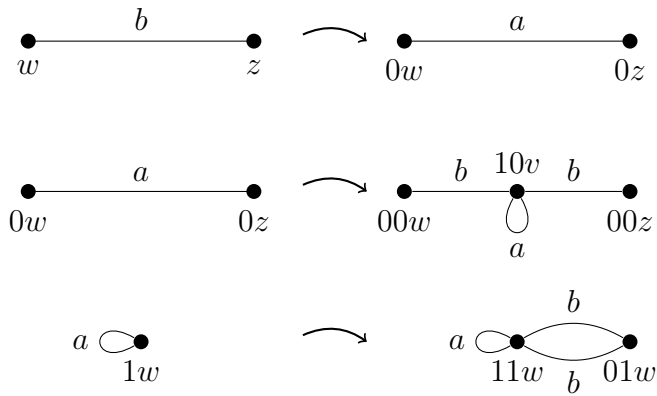

\begin{figure}
\centering
\begin{tikzpicture}
\path  (5,4) node[circle,fill,inner sep=2pt,label=below:$0$](0){} edge [loop left,every loop/.style= {looseness=10, distance=20,in=150,out=210}] node{$a$} () --  (6.5,4) node[circle,fill,inner sep=2pt,label=below:$1$](1){} edge [loop right,every loop/.style= {looseness=10, distance=20,in=30,out=330}] node{$a$}();
\draw (0)  to [bend left] node[above] {$b$} (1);
\draw (0)  to [bend right] node[below] {$b$} (1);
\path  (3.5,2) node[circle,fill,inner sep=2pt,label=below:$10$](10){} edge [loop left,every loop/.style= {looseness=10, distance=20,in=150,out=210}] node{$a$} () --  (5,2) node[circle,fill,inner sep=2pt,label=below:$00$](00){}  --  (6.5,2) node[circle,fill,inner sep=2pt,label=below:$01$](01){}  (8,2) node[circle,fill,inner sep=2pt,label=below:$11$](11){} edge [loop right,every loop/.style= {looseness=10, distance=20,in=30,out=330}] node{$a$}();
\draw (10)  to [bend left] node[above] {$b$} (00);
\draw (10)  to [bend right] node[below] {$b$} (00);
\draw (00)  to [bend left] node[above] {$a$} (01);
\draw (00)  to [bend right] node[below] {$a$} (01);
\draw (01)  to [bend left] node[above] {$b$} (11);
\draw (01)  to [bend right] node[below] {$b$} (11);
\path[yshift=-25]  (2,0) node[circle,fill,inner sep=2pt,label=below:$110$](110){} edge [loop left,every loop/.style= {looseness=10, distance=20,in=150,out=210}] node{$a$} () --  (3.5,0) node[circle,fill,inner sep=2pt,label=below:$010$](010){} --  (5,0) node[circle,fill,inner sep=2pt,label=below:$000$](000){}  -- (5.75,1)node[circle,fill,inner sep=2pt,label=left:$100$](100){} edge [loop above,every loop/.style= {looseness=10, distance=20,in=60,out=120}] node{$a$} ()  -- (5.75,-1) node[circle,fill,inner sep=2pt,label=left:$101$](101){} edge [loop below,every loop/.style= {looseness=10, distance=20,in=240,out=300}] node{$a$} ()  --  (6.5,0) node[circle,fill,inner sep=2pt,label=below:$001$](001){}  (8,0) node[circle,fill,inner sep=2pt,label=below:$011$](011){} -- (9.5,0) node[circle,fill,inner sep=2pt,label=below:$111$](111){} edge [loop right,every loop/.style= {looseness=10, distance=20,in=30,out=330}] node{$a$}();
\draw (110)  to [bend left] node[above] {$b$} (010);
\draw (110)  to [bend right] node[below] {$b$} (010);
\draw (010)  to [bend left] node[above] {$a$} (000);
\draw (010)  to [bend right] node[below] {$a$} (000);
\draw (001)  to [bend left] node[above] {$a$} (011);
\draw (001)  to [bend right] node[below] {$a$} (011);
\draw (011)  to [bend left] node[above] {$b$} (111);
\draw (011)  to [bend right] node[below] {$b$} (111);
\draw (000)  to [bend left] node[above] {$b$} (101);
\draw (101)  to [bend left] node[above] {$b$} (001);
\draw (001)  to [bend left] node[right] {$b$} (100);
\draw (100)  to [bend left] node[left] {$b$} (000);
\end{tikzpicture}
\caption{The graphs $\Gamma_1,\Gamma_2$ and $\Gamma_3$.}
\label{fig:gammagraphs}
\end{figure}
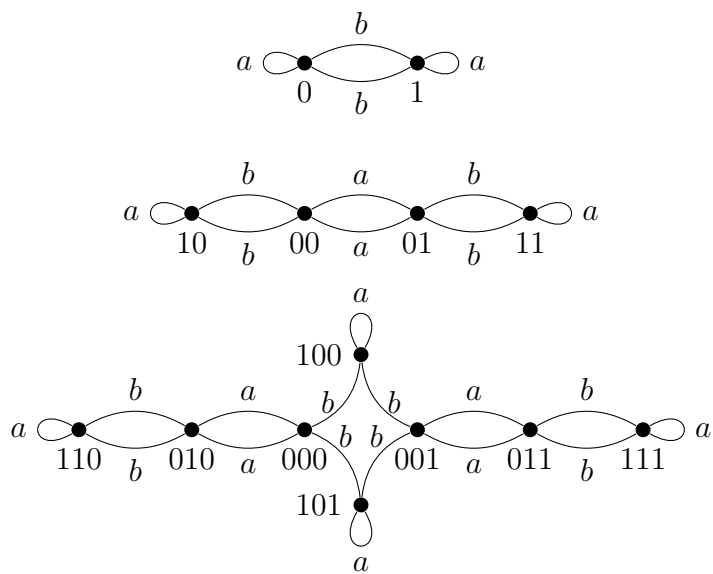

\subsection{The graphs $G_n$}

In order to simplify some technicalities in the paper we do not work directly with the graphs $\Gamma_n$ but instead treat  graphs $G_n$ defined as follows.   For $n\geq 2$, replace the degree four vertex $0^n$ in $\Gamma_n$ with four vertices, one for each edge incident upon $0^n$, and  call these boundary vertices.  Observe that this produces  two new graphs, each with two boundary vertices.   Denote the larger subgraph by $G_n$ and observe that the self-similarity of $\Gamma_n$ implies the smaller subgraph is isomorphic to $G_{n-1}$ if $n\geq 3$.   By using the addressing scheme for the finite Schreier graphs, the subgraph $G_n$ consists of those vertices in $\Gamma_n$ with addresses not ending in~$10$, plus the boundary vertices.  Evidently one can recover the graph $\Gamma_n$ by identifying the boundaries of $G_n$ and $G_{n-1}$ as a single point; we return to this idea later and illustrate it for $n=3$ in Figure~\ref{add}????.  To ensure this is true for all $n\geq1$ we define $G_0$ and $G_1$ as in Figure~\ref{approx}, which also shows $G_2$ and $G_3$.  Then it is apparent we may generate the graphs $G_n$ from $G_0$  using the same replacement rules for $\Gamma_n$ that are depicted in Figure~\ref{replacementwholeSchreier}.  We denote the set of  boundary points of $G_n$ by $\partial G_n$.

\begin{figure}
\centering
\begin{tikzpicture}
\path  (2,6)node{$G_0$} (3,6) node[circle,fill,inner sep=2pt](0){} --node[above]{$a$}   (6,6) node[circle,fill,inner sep=2pt](1){};
\draw (0) to (1);
\path  (8,6) node{$G_2$} (9,6) node[circle,fill,inner sep=2pt](200a){} --node[above]{$a$}  (10.5,6) node[circle,fill,inner sep=2pt](2u){} --node[above]{$a$}  (12,6) node[circle,fill,inner sep=2pt](200b){};
\draw (200a) to (2u) to (200b);
\path[-, every loop/.style= {looseness=10, distance=20, in=300,out=240}]   (2u)  --   (10.5,5) node[circle,fill,inner sep=2pt](211){} edge [loop below] node {$a$} (10.5,5) ;
\draw (2u) edge[bend right] node[left]{$b$} (211) edge[ bend left] node[right]{$b$} (211);
\path  (2,4.5) node{$G_1$} (3,4.5) node[circle,fill,inner sep=2pt](00a){} --node[above]{$b$}  (4.5,4.5) node[circle,fill,inner sep=2pt](u){} --node[above]{$b$}  (6,4.5) node[circle,fill,inner sep=2pt](00b){};
\draw (00a) to (u) to (00b);
\path[-, every loop/.style= {looseness=10, distance=20, in=300,out=240}]   (u) edge [loop below] node{$a$}();
\draw  (2,3)node{$G_3$} (4.5,3) node[circle,fill,inner sep=2pt]{} --node[above]{$b$} (6,3) node[circle,fill,inner sep=2pt](300a){} --node[above]{$b$}  (7.5,3) node[circle,fill,inner sep=2pt](3u){} --node[above]{$b$}  (9,3) node[circle,fill,inner sep=2pt](300b){} --node[above]{$b$} (10.5,3) node[circle,fill,inner sep=2pt]{};
\path[-, every loop/.style= {looseness=10, distance=20, in=300,out=240}]   (3u)  --   (7.5,2) node[circle,fill,inner sep=2pt](311){} -- (7.5,1) node[circle,fill,inner sep=2pt](3101){} edge [loop below] node {$a$}(7.5,0)  ;
\path[every loop/.style= {looseness=10, distance=20, in=300,out=240}]   (300a) edge [loop below] node {$a$} ()  (300b) edge [loop below] node {$a$} ();
\draw (3u) edge[bend right] node[left]{$a$} (311) edge[ bend left] node[right]{$a$} (311);
\draw (311) edge[bend right] node[left]{$b$} (3101) edge[ bend left] node[right]{$b$} (3101);
\end{tikzpicture}
\caption{Graphs $G_n$, $n=0,1,2,3$}
\label{approx}
\end{figure}
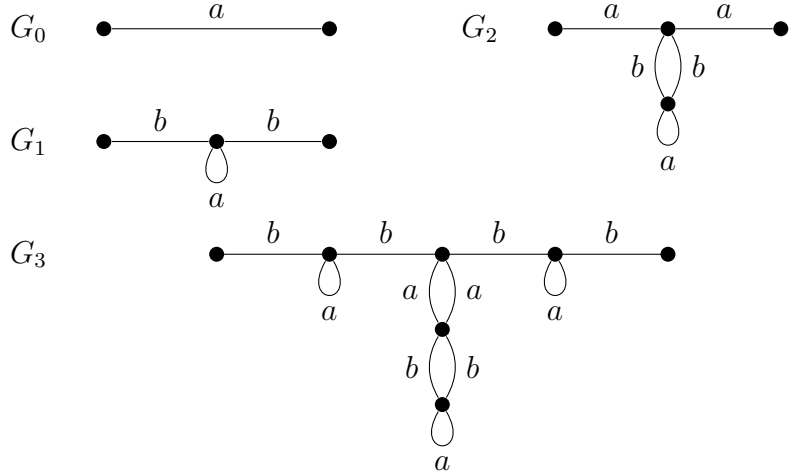

We define a Laplacian $L_n$ on $G_n$ in the usual manner.  Let $\ell^2_n$ denote the functions $\mathbb{R}^{G_n}$ with $L^2$ norm with respect to the counting measure on the vertex set.   For vertices $x,y$ of $G_n$ let $c_{xy}$ be the number of edges joining $x$ and $y$ and note that $c_{xy}\in\{0,1,2\}$.
\begin{definition}\label{def:lap} The Laplacian on $\ell^2_n$ is
\begin{equation}\label{eqn:defnofL_n}
	L_n  f(x) =  \sum_y c_{xy}(f(x)- f(y)).
	\end{equation}
\end{definition}
$L_n$ is self-adjoint,  irreducible because $G_n$ is connected, and non-negative definite because  $\sum_x f(x)L_nf(x)=\frac12\sum_{x,y} c_{xy} (f(x)-f(y))^2$.

We will also make substantial use of the Dirichlet Laplacian, which is given by~\eqref{eqn:defnofL_n} but with domain the functions $\{f\in \mathbb{R}^{G_n}:f|_{\partial G_n}=0\}$.

\subsection{Blowups of $G_n$ and their relation to Schreier graphs}

Since our graphs $G_n$ are not Schreier graphs we cannot take orbital graphs as was done in the Schreier case. A convenient alternative is a variant of the notion of fractal blowup due to Strichartz~\cite{StrichartzFractalsinthelarge}, in which a blowup of a fractal defined by a contractive iterated function system is defined as the union of images under branches of the inverses of the i.f.s.\ maps. The corresponding idea in our setting is to use branches of the inverses of the graph coverings  corresponding to truncation of words; these inverses are naturally represented by appending letters. The fact that we restrict to $G_n$ means words with certain endings are omitted.

Recall that in the usual notation for finite Schreier graphs, $G_n$, $n\geq2$, is the subset of $\Gamma_n\setminus\{0^n\}$ consisting of words that do not end with~$10$, except that the vertex~$0^n$ is replaced with two distinct boundary vertices which we will write $0^{n-1}x$ and $0^{n-1}y$; if $n\geq3$ the former is connected to a vertex ending in~$0$ and the latter to one ending in~$1$.  One definition of an infinite blowup is as follows.  
\begin{definition}\label{def:Gnblowups}
An infinite blowup of the graphs $G_n$ consists of a sequence $\{k_n\}_{n \in \mathbb{N}} \subset \mathbb{N}$ with $k_1=2$ and $k_{n+1}-k_n\in\{1,2\}$ for each $n$, and corresponding graph morphisms $\iota_{k_n}:G_{k_n}\to G_{k_{n+1}}$ of the following specific type. If $k_{n+1}-k_n=1$ then $\iota_{k_n}$ is the map that appends~$1$ to each non-boundary address and replaces both $x$ and $y$ by $01$.  If $k_{n+1}-k_n=2$ then $\iota_{k_n}$ is one of two maps: either the one that appends $00$ to non-boundary addresses and makes the substitutions $x\mapsto00x$, $y\mapsto 001$, or the one that appends $01$ to non-boundary addresses and makes the substitutions $x\mapsto001$ and $y\mapsto00y$. Now let $G_{\infty}$ be the direct limit (in the category of sets) of the system $(G_{k_n},\iota_{k_n})$.  We write $\tilde{\iota}_{k_n}:G_{k_n}\to G_\infty$ for the corresponding canonical graph morphisms.
\end{definition}
Note that the choice $k_1=2$ was made only to ensure validity of the notation for $G_n$ when definining $\iota_{k_n}$; with somewhat more notational work we could begin with $k_1=0$.

The following theorem is essentially known, see \cite{EndsofSchreiergraphs,Bondarenko,nagnibeda}. We provide a concise proof for the sake of completeness and convenience of the reader. 
\begin{theorem}\label{thm:orbitalSareblowups}
With one exception, all isomorphism classes of orbital Schreier graphs of the Basilica group are also realized as infinite blowups of the graphs $G_n$.  Conversely, all blowups of $G_n$ except those with boundary points are orbital Schreier graphs.
\end{theorem}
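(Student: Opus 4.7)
The approach is to build a correspondence (up to isomorphism, allowing one exception) between orbital Schreier graphs $\Gamma_v$ for $v\in\partial T$ and infinite blowups $G_\infty$. The key structural observation is that the decomposition $\Gamma_{n+1}=G_{n+1}\cup G_n$ glued at $0^{n+1}$ sorts every non-identification vertex $w\in\Gamma_{n+1}$ uniquely into one of the two pieces: $w$ lies in $G_n$ iff its address ends in $10$, and lies in $G_{n+1}$ otherwise. Hence for $v\in\partial T$ whose truncations $[v]_n$ eventually avoid the gluing vertex, one can say unambiguously which piece contains $[v]_n$ at each level.

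The plan is then to define $\{k_n\}$ recursively from the basepoint: if $[v]_{k_n+1}$ lies in the large piece $G_{k_n+1}$, set $k_{n+1}=k_n+1$; if it lies in the small piece $G_{k_n}$, set $k_{n+1}=k_n+2$, reflecting that one further subdivision is required before the small piece plays the role of the outer graph of a new decomposition. I would verify, using the replacement rules of Figure~\ref{replacementwholeSchreier} and the address substitutions of Definition~\ref{def:Gnblowups}, that the induced inclusion $G_{k_n}\hookrightarrow G_{k_{n+1}}$ matches the morphism $\iota_{k_n}$, with the prescribed boundary substitutions ($x,y\mapsto 01$ or $x\mapsto 00x,\,y\mapsto 001$ together with its mirror) exactly describing which boundary vertices get absorbed into the interior after one or two replacement steps. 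Since the pGH convergence $(\Gamma_n,[v]_n)\to(\Gamma_v,v)$ guarantees that every finite ball about $v$ in $\Gamma_v$ is eventually contained in $\tilde\iota_{k_n}(G_{k_n})$, this identifies $\Gamma_v\cong G_\infty$. The single exception is the orbital Schreier graph whose basepoint is forced to lie at the gluing vertex of every finite decomposition, so that no choice of $\{k_n\}$ can place this basepoint in the interior of any $G_{k_n}$.

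For the converse, given a blowup $G_\infty$ with no surviving boundary points, I would reconstruct a word $v\in\partial T$ from the sequence $\{k_n\}$ and the choices made at each $\iota_{k_n}$: each step with $k_{n+1}-k_n=1$ appends the letter $1$, while each step with $k_{n+1}-k_n=2$ contributes a pair of letters determined by which of the two options was chosen. The no-boundary-point hypothesis is exactly the combinatorial assertion that every boundary vertex is eventually absorbed, and this ensures both that the reconstructed $v$ is a bona fide element of $\partial T$ and that $[v]_n$ avoids the gluing vertex at each stage, so the forward construction applied to $v$ reproduces the given blowup. Passing to the pGH limit then identifies the blowup with $\Gamma_v$.

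The main obstacle I foresee is making the correspondence between address-endings in $\Gamma_n$, the $G_n$-decompositions, and the specific boundary substitutions in $\iota_{k_n}$ completely explicit and consistent. This is fundamentally a careful bookkeeping task driven by the replacement rules, but getting the address conventions correct — especially distinguishing the two sub-options in the $k_{n+1}-k_n=2$ case and verifying that they exhaust the ways the small piece can be attached — will require a delicate case analysis, as will the argument that the exceptional orbital Schreier graph is unique up to isomorphism and that it alone resists the blowup description.
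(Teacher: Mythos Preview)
Your overall approach matches the paper's: construct $\{k_n\}$ from the tail of $v$ by skipping indices where $[v]_m$ ends in $10$, verify that the resulting inclusions coincide with the maps $\iota_{k_n}$ of Definition~\ref{def:Gnblowups}, and use pointed Gromov--Hausdorff convergence to identify the direct limit with $\Gamma_v$. The converse via reconstructing $v$ from the sequence of appended blocks $\{1,00,01\}$ is also exactly what the paper does.

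There is, however, a genuine gap in your treatment of the exceptional case. You characterize the exception as ``the basepoint is forced to lie at the gluing vertex of every finite decomposition,'' which singles out only $v=\bar{0}$. But your key claim---that every finite ball about $v$ in $\Gamma_v$ is eventually contained in $\tilde\iota_{k_n}(G_{k_n})$---fails more broadly: it fails precisely when the graph distance from $[v]_{k_n}$ to $0^{k_n}$ stays bounded, since then balls of fixed radius around $[v]_{k_n}$ always see the split vertex and hence do not embed isomorphically into $G_{k_n}$. This happens for every $v$ of the form $w\bar{0}$ or $w\bar{01}$ (for instance $v=1\overline{01}$ has $[v]_{k_n}$ in the interior of $G_{k_n}$ but at bounded distance from the boundary). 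Your argument as written would wrongly conclude that the blowup realizes $\Gamma_v$ for all such $v$.

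The paper handles this not by a ``delicate case analysis'' but by invoking the classification results of~\cite{nagnibeda}: Proposition~2.4 there identifies the bounded-distance condition with $v\in\{w\bar{0},w\bar{01}\}$, and Theorem~4.1 shows that all such $v$ yield a single isomorphism class of orbital Schreier graph (the unique one with four ends). That external input is what collapses the a~priori larger exceptional set to a single isomorphism class, and you would need either to cite it or to reproduce that classification.
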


\begin{proof}
The orbital Schreier graph $\Gamma_v$ associated to the point $v\in\partial T$ is the pointed Gromov-Hausdorff limit of the sequence $(\Gamma_k,[v]_k)$ with the distance in~\eqref{eq:pGHdist}.  
Now set $k_1=2$ and define $k_{n+1}$ inductively by $k_{n+1}=k_n+1$ if $v_{k_n+1}=1$ and $k_{n+1}=k_n+ 2$ if $v_{k_n+1}=0$. It follows that $[v]_{k_{n+1}}$ is obtained from $[v]_{k_n}$ by appending one of $00$, $01$, or~$1$, and we can choose $\iota_{k_{n+1}}$ so $[v]_{k_{n+1}}=\iota_{k_{n+1}}([v]_n)$.   The maps $\iota_{k_n}:G_{k_n}\to G_{k_{n+1}}$ define a fractal blowup associated to the boundary point $v$ and we immediately observe that if the distance between $[v]_{k_n}$ and $0^{k_n}$ diverges as $n\to\infty$ then the sequence $(G_{k_n},[v]_{k_n})$ converges in the pointed Gromov-Hausdorff sense~\eqref{eq:pGHdist} to the limit of $(\Gamma_{k_n},[v]_{k_n})$, which is precisely the orbital Schreier graph $(\Gamma_v,v)$

In the alternative circumstance that  the distance between $[v]_{k_n}$ and $0^{k_n}$ remains bounded we determine from Proposition~2.4 of~\cite{nagnibeda} that $v$ is of the form $w\bar{0}$ or $w\overline{01}$, where $w$ is a finite word.  Moreover, in this circumstance Theorem~4.1 of~\cite{nagnibeda} establishes that $\Gamma_v$ is the unique (up to isomorphism) orbital Schreier graph with 4 ends.  Accordingly, our infinite blowups capture all orbital Schreier graphs except the one with 4 ends.

The converse is almost trivial: the definition of an infinite blowup gives a sequence $k_n$ and corresponding elements of $\{1,00,01\}$.  Appending these inductively defines an infinite word $v$ and thus an orbital Schreier graph. If $v$ is not of the form $w\bar{0}$ or $w\bar{01}$ then the orbital Schreier graph is simply $G_\infty$ with distinguished point $v$.  Otherwise the blowup is not the same as the orbital Schreier graph for the unsurprising reason that the blowup contains~$\bar{0}$ as a boundary point.
\end{proof}

\subsection{The Laplacian on a blowup}
Fix a blowup $G_\infty$ given by sequences $k_n$ and $\iota_{k_n}$ as in Definition~\ref{def:Gnblowups} and let $l^2$ denote the space of functions on the vertices of $G_\infty$ with counting measure and $L^2$ norm.
\begin{definition}\label{defn:Lapinfty}
The Laplacian $L_\infty$ on $l^2$ is defined as in~\eqref{eqn:defnofL_n} where $c_{xy}$ is the number of edges joining $x$ to $y$ in $G_\infty$.
\end{definition}

Recall that $l^2_{k_n}$ is the $L^2$ space of functions $G_{k_n}\to\mathbb{R}$ with counting measure on the vertices. Using the canonical graph morphisms  $\tilde{\iota}_{k_n}:G_{k_n}\to G_\infty$ we identify each $l^2_{k_n}$ with the subspace of $l^2$ consisting of functions supported on  $\tilde{\iota}_{k_n}(G_{k_n})$.  It is obvious that if $x\in G_{k_n}$ is not a boundary point of $G_{k_n}$ then the neighbors of $x$ in $G_{k_n}$ are in one-to-one correspondence with the vertices neighboring $\tilde{\iota}_{k_n}(x)$ in $G_\infty$ and therefore
\begin{equation}\label{eq:LinftyisLkn}
	L_{\infty}f(\tilde{\iota}_{k_n}(x)) = L_{k_n} \bigl( f|_{\tilde{\iota}_{k_n}(G_{k_n})} \bigr)(x).
	\end{equation}

\subsection{Number of vertices of $G_n$}

It will be useful later to have an explicit expression for the number of vertices in $G_n$.  This may readily be computed from the decomposition in Figure~\ref{add}.

\begin{lemma}\label{csgetdegrees}
The number of vertices in $G_n$ is given by
\begin{equation*}
V_n = \dfrac{2^{2+n} +(-1)^{1+n}+9}{6}.
\end{equation*}
\end{lemma}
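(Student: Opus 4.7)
The plan is to derive a simple first-order linear recurrence for $V_n$ from the decomposition of $\Gamma_n$ mentioned just before Figure~\ref{add}, and then solve it either by induction or by standard linear recurrence methods.

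First, I would recall the key observation from the definition of $G_n$: the graph $\Gamma_n$ is obtained by gluing $G_n$ and $G_{n-1}$ along their boundaries, in the sense that the four boundary vertices ($\partial G_n$ together with $\partial G_{n-1}$) are all identified with the single vertex $0^n$ in $\Gamma_n$. Since $|\Gamma_n|=2^n$ (words of length $n$ in the binary alphabet), a simple inclusion-exclusion count yields
\begin{equation*}
V_n + V_{n-1} - 3 = 2^n, \qquad \text{i.e.} \qquad V_n = 2^n + 3 - V_{n-1},
\end{equation*}
valid for all $n \geq 2$. The base case $V_1 = 3$ is read directly from Figure~\ref{approx}, and one can sanity-check against $V_2=4$, $V_3=7$ visible in the same figure.

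Next I would solve the recurrence. The homogeneous solution is $C(-1)^n$; for a particular solution, trying $A\cdot 2^n + B$ and matching coefficients gives $A = \tfrac{2}{3}$ and $B = \tfrac{3}{2}$. Fitting the initial condition $V_1=3$ yields $C = -\tfrac{1}{6}$, so
\begin{equation*}
V_n \;=\; \frac{2^{n+1}}{3} + \frac{3}{2} - \frac{(-1)^n}{6} \;=\; \frac{2^{n+2} + (-1)^{n+1} + 9}{6},
\end{equation*}
as claimed. Alternatively, one can simply verify by induction that the proposed closed form satisfies $V_n+V_{n-1}=2^n+3$ and agrees with the base case.

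There is no serious obstacle here; the only subtle point is bookkeeping the identification at the boundary correctly (two boundary vertices in each of $G_n$ and $G_{n-1}$ collapse to one vertex $0^n$ in $\Gamma_n$, hence the $-3$ in the recurrence rather than, say, $-1$ or $-2$). Provided that is handled, the rest is a routine linear recurrence computation.
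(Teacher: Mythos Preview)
Your proof is correct, but it goes by a different decomposition than the paper's. The paper uses the internal self-similar splitting of $G_n$ into one copy of $G_{n-1}$ and two copies of $G_{n-2}$ glued at a single interior vertex (Figure~\ref{add}), which yields the second-order recursion $V_n = V_{n-1} + 2V_{n-2} - 3$ with initial data $V_0=2$, $V_1=3$, and then verifies the closed form by induction. You instead exploit the relation between $G_n$, $G_{n-1}$ and the full Schreier graph $\Gamma_n$, obtaining the first-order recursion $V_n + V_{n-1} = 2^n + 3$ directly from $|\Gamma_n|=2^n$. Your route is slightly more economical here, since a first-order linear recurrence is immediate to solve; the paper's second-order recursion, on the other hand, is the one that reappears throughout Section~\ref{section:dynamics} in the dynamics for the characteristic polynomials, so it is natural for the authors to introduce it at this point.
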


\begin{proof}
$G_n$ is constructed from a copy of $G_{n-1}$ and two copies of $G_{n-2}$ in which four boundary points are identified to a single vertex $u$, as shown for the case $n=3$ in Figure~\ref{add}.  Thus $V_n$ must satisfy the recursion $V_n = V_{n-1}+ 2V_{n-2}  -3$ with $V_0=2$, $V_1=3$. The formula given matches these initial values and satisfies the recursion because
\begin{align*}
	\lefteqn{6(V_{n-1}+2V_{n-2}-3)}\quad& \\
	& =9+2\cdot 9+(-1)^n+2(-1)^{n-1}+2^{1+n}+2\cdot2^n-18\\
	&= 9+(-1)^{1+n}+2^{2+n}
	\end{align*}
so the result follows by induction.
\end{proof}

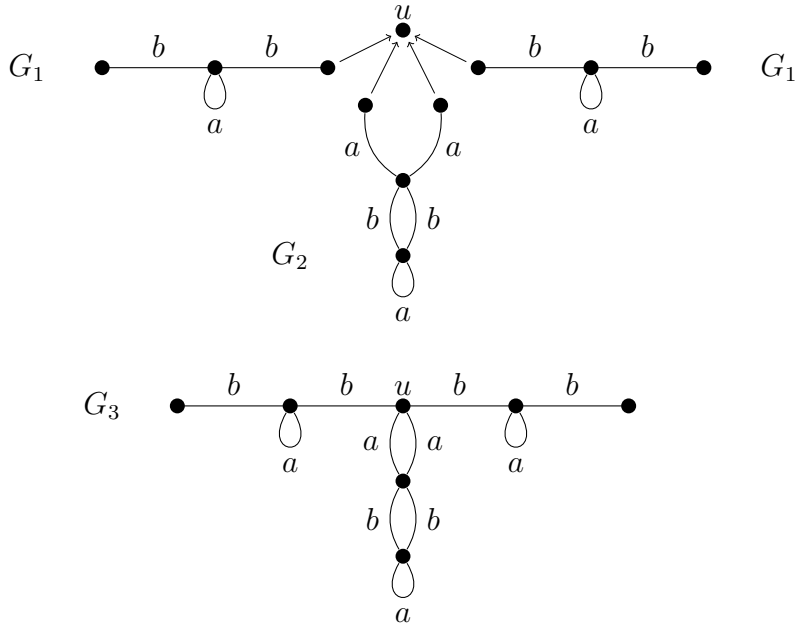
\begin{figure}[h!]
\centering
\begin{tikzpicture}
\path (7,5)node[circle,fill,inner sep=2pt](uu){} node[above]{$u$};
\draw   (2,4.5) node{$G_1$} (3,4.5) node[circle,fill,inner sep=2pt]{} --node[above]{$b$}  (4.5,4.5) node[circle,fill,inner sep=2pt](u){} --node[above]{$b$}  (6,4.5) node[circle,fill,inner sep=2pt](00b){};
\path[-, every loop/.style= {looseness=10, distance=20, in=300,out=240}]   (u) edge [loop below] node{$a$}();
\draw   (8,4.5) node[circle,fill,inner sep=2pt](00a){} --node[above]{$b$}  (9.5,4.5) node[circle,fill,inner sep=2pt](u){} --node[above]{$b$}  (11,4.5) node[circle,fill,inner sep=2pt]{} (12,4.5) node{$G_1$};
\path[-, every loop/.style= {looseness=10, distance=20, in=300,out=240}]   (u) edge [loop below] node{$a$}();
\path (5.5,2) node{$G_2$} (6.5,4) node[circle,fill,inner sep=2pt](200a){}   (7,3) node[circle,fill,inner sep=2pt](2u){}  (7.5,4) node[circle,fill,inner sep=2pt](200b){};
\draw[bend left] (2u) to node[left]{$a$} (200a);
\draw[bend right] (2u) to node[right]{$a$} (200b);
\path[-, every loop/.style= {looseness=10, distance=20, in=300,out=240}]   (2u)  --   (7,2) node[circle,fill,inner sep=2pt](211){} edge [loop below] node {$a$} (7,1) ;
\draw (2u) edge[bend right] node[left]{$b$} (211) edge[ bend left] node[right]{$b$} (211);
\draw[<-,very thin,shorten <=2pt, shorten >=2pt] (uu) to (00a);
\draw[<-,very thin,shorten <=2pt, shorten >=2pt] (uu) to (00b);
\draw[<-,very thin, shorten <=2pt, shorten >=2pt] (uu) to (200a);
\draw[<-,very thin, shorten <=2pt, shorten >=2pt] (uu) to (200b);
\draw  (3,0)node{$G_3$} (4,0) node[circle,fill,inner sep=2pt]{} --node[above]{$b$} (5.5,0) node[circle,fill,inner sep=2pt](300a){} --node[above]{$b$}  (7,0) node[circle,fill,inner sep=2pt](3u){} node[above]{$u$} --node[above]{$b$}  (8.5,0) node[circle,fill,inner sep=2pt](300b){} --node[above]{$b$} (10,0) node[circle,fill,inner sep=2pt]{};
\path[-, every loop/.style= {looseness=10, distance=20, in=300,out=240}]   (3u)  --   (7,-1) node[circle,fill,inner sep=2pt](311){} -- (7,-2) node[circle,fill,inner sep=2pt](3101){} edge [loop below] node {$a$}(7,-3)  ;
\path[every loop/.style= {looseness=10, distance=20, in=300,out=240}]   (300a) edge [loop below] node {$a$} ()  (300b) edge [loop below] node {$a$} ();
\draw (3u) edge[bend right] node[left]{$a$} (311) edge[ bend left] node[right]{$a$} (311);
\draw (311) edge[bend right] node[left]{$b$} (3101) edge[ bend left] node[right]{$b$} (3101);
\end{tikzpicture}\caption{$G_3$ constructed from a copy of $G_2$ and two of $G_1$.}
\label{add} 
\end{figure}


\section{Dynamics for the spectrum of $G_n$}\label{section:dynamics}

It is well known that the spectra of Laplacians on self-similar graphs and fractals may often be described by using dynamical systems; we refer to~\cite{RammalToulouse,MalozemovTeplyaev,MR3558157} for typical examples and constructions of this type in both the physics and mathematics literature.  In particular, Grigorchuk and Zuk~\cite{MR1929716} gave a description of the Laplacian spectra for the graphs $\Gamma_n$ using a two-dimensional dynamical system.  Their method uses a self-similar group version of the Schur-complement (or Dirichlet-Neumann map) approach.  One might describe this  approach as performing a reduction at small scales, in that a single step of the dynamical system replaces many small pieces of the graph by equivalent weighted graphs.  In the case of $\Gamma_n$ one might think of decomposing it into copies of $G_2$ and $G_1$ and then performing an operation that reduces the former to weighted copies of $G_1$ and the latter to weighted copies of $G_0$, thus reducing $\Gamma_n$ to a weighted version of $\Gamma_{n-1}$.  The result is a dynamical system in which the characteristic polynomial of a weighted version of $\Gamma_n$ is written as the characteristic polynomial of a weighted version of $\Gamma_{n-1}$, composed with the dynamics that alters the weights. The spectrum is then found as the intersection of the Julia set of the dynamical system with a constraint on the weights.  See~\cite{MR1929716} for details and~\cite{MR2459869} for a similar method applied in different circumstances.

The approach we take here is different: we decompose at the macroscopic rather than the microscopic scale, splitting $G_n$ into a copy of $G_{n-1}$ and two of $G_{n-2}$, and then reasoning about the resulting relations between the characteristic polynomials.  The result is that our dynamical map is applied to the characteristic polynomials rather than appearing within a characteristic polynomial.  It is not a better method than that of~\cite{MR1929716} -- indeed it seems it may be more complicated to work with -- but it gives some insights that may not be as readily available from the more standard approach.

\subsection{Characteristic Polynomials}\label{subseccharpoly}
Our approach to analyzing the Laplacian spectrum for $G_n$ relies on the decomposition of $G_n$ into a copy of $G_{n-1}$ and two copies of $G_{n-2}$ as in Figure~\ref{add}. 

The following elementary lemma relates the characteristic polynomials of matrices under a decomposition of this type.  (This lemma  is a classical type and is presumably well known, though we do not know whether this specific formulation appears in the literature.)  It is written in terms of modifications of the Laplacian $L_n$ on certain subsets of $G_n$.   Consider a graph $G$ and a matrix $L$  indexed by the vertices of $G$ and such that the $jk$ entry is zero if there is no edge between the $j$ and $k$ vertices of $G$.  For $Z\subset G$ let us write $L^Z$ for the matrix with domain $\mathbb{R}^{G\setminus Z}$ and boundary condition $f|_Z=0$. The best-known cases are when $L$ is the graph Laplacian: then if $Z=\partial G$ we see $L^Z$ is the Dirichlet Laplacian and when $Z$ is empty $L^Z$ is the Neumann Laplacian. Also note that the characteristic polynomial of $L^Z$ is simply that of the matrix obtained from $L$ by deleting the rows and columns corresponding to the set $Z$.

\begin{lemma}\label{lem:charpoly}
Let $G$ be a finite graph, $u$ a fixed vertex, and $C(u)$ the set of simple cycles in $G$ containing $u$. Suppose $L$ is a matrix indexed by the vertices of $G$ with diagonal entries $d_j$  and off-diagonal entries $-c_{jk}$ such that $c_{jk}=0$ unless the $j$ and $k$ vertices of the graph are connected by an edge. If $D(\cdot)$ denotes the operation of taking the characteristic polynomial then
\begin{equation*}
D(L)(\lambda) = (\lambda -d_u) D(L^{\{u\}})(\lambda) - \sum_{v\sim u} c^2_{uv}D(L^{\{u,v\}})(\lambda) + 2\sum_{Z \in C(u)} (-1)^{n(Z)-1}\pi(Z)  D(L^Z)(\lambda),
\end{equation*}
where $n(Z)$ is the number of vertices in $Z$ and $\pi(Z)$ is the product of the edge weights $c_{jk}$ along $Z$.
\end{lemma}
\begin{proof}
Recall that the determinant of a matrix $M=[m_{jk}]$ may be written as a sum over all permutations of the vertices of $G$ as follows: $\det(M)=\sum_\sigma \sgn(\sigma) \prod_j m_{j\sigma(j)}$.  Observe that each product term is non-zero only when the permutation $\sigma$ moves vertices along cycles on the graph and factor such $\sigma$ as $\sigma=\sigma'\sigma''$, where $\sigma'$ is the permutation on the $\sigma$ orbit of $u$ which we denote by $Z_\sigma$.  Take $M=\lambda-L$.  Using the Kronecker symbol $\delta_{jk}$ and writing $Z_\sigma^c$ for the complement of $Z_\sigma$ we write $D(L)$ as
\begin{equation*}
	\sum_{\sigma'} \sgn(\sigma') \prod_{j\in Z_\sigma} \bigl((\lambda-d_j) \delta_{j\sigma'(j)}+c_{j\sigma'(j)}\bigr)  \sum_{\sigma''}\sgn(\sigma'') \prod_{j\in Z_\sigma^c} \bigl((\lambda-d_j) \delta_{j\sigma''(j)}+c_{j\sigma''(j)}\bigr).
	\end{equation*}

For terms with $\sigma(u)=u$ the values of $\sigma''$ run over all permutations of the other vertices, so the corresponding term in the determinant sum is the product $(\lambda-d_u) D(L^{\{u\}})$.  When $\sigma'$ is a transposition $u\mapsto v\mapsto u$ we have $\sgn(\sigma')=-1$ and the product along $Z_\sigma$ is simply $c_{uv}^2$, so the corresponding terms have the form $-c^2_{uv}D(L^{\{u,v\}})$.

The remaining possibility is that the orbit of $u$ is a simple cycle $Z$ containing $n(Z)$ vertices. There are then two permutations $\sigma'$ that give rise to $Z$; these correspond to the two directions in which the vertices may be moved one position along $Z$. Each has $\sgn(\sigma')=(-1)^{n(Z)-1}$, so the corresponding terms in the determinant expansion are as follows
\begin{align*} 
	&\sum_{\sigma'} \sgn(\sigma') \prod_{j\in Z}c_{j\sigma(j)}  \sum_{\sigma''} \sgn(\sigma'') \prod_{j\in Z^c} \bigl((\lambda-d_j) \delta_{j\sigma(j)}+c_{j\sigma(j)}\bigr)\\
	&\quad =\sum_{\sigma'} (-1)^{n(Z)-1}\pi(Z) D(L^Z)\\
	&\quad = 2 (-1)^{n(Z)-1} \pi(Z) D(L^Z).
	\end{align*}
Combining these terms gives the desired expression for $D(L)(\lambda)$.
\end{proof}

In our application of this lemma we will consider graphs $A_n,B_n,C_n,D_n,E_n$ which are derived from the graphs $G_n$ discussed in the previous section.  We put $A_n=G_n$, $B_n$ to be $G_n$ with one boundary point deleted, $C_n$ to be $G_n$ with both boundary points deleted, $D_n$ to be $G_n$ with both boundary points deleted and also one vertex neighboring a boundary point deleted, and finally $E_n$ to be $G_n$ with one boundary point and its neighbor deleted.  The graphs $A_3$,$B_3$, and $C_3$ are shown in Figure~\ref{graphsABC}, while $D_3$ and $E_3$ are in Figure~\ref{graphsDE}.  It will be convenient to write $a_n(\lambda)$, $b_n(\lambda)$, $c_n(\lambda)$ for the characteristic polynomials of $A_n$, $B_n$ and $C_n$.  Note that then the roots of $a_n(\lambda)$ are the eigenvalues of the Neumann Laplacian and the roots of $c_n(\lambda)$ are the eigenvalues of the Dirichlet Laplacian on $G_n$.  Our initial goal is to describe these polynomials by using a dynamical system constructed from the decomposition in Figure~\ref{add}.

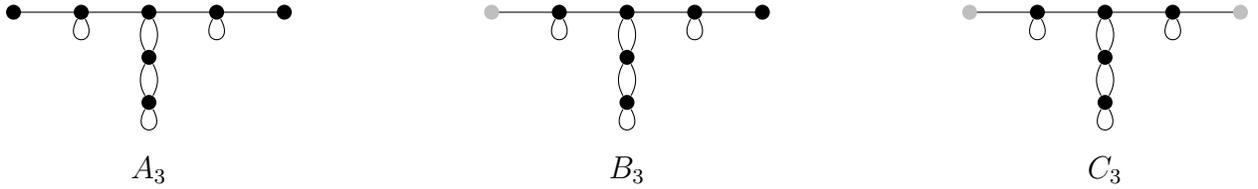
\begin{figure}[h!]
\centering
\begin{tikzpicture}[scale=0.6]
\draw  (4.5,3) node[circle,fill,inner sep=2pt]{} -- (6,3) node[circle,fill,inner sep=2pt](300a){} --  (7.5,3) node[circle,fill,inner sep=2pt](3u){} --  (9,3) node[circle,fill,inner sep=2pt](300b){} -- (10.5,3) node[circle,fill,inner sep=2pt]{};
\path[-, every loop/.style= {looseness=10, distance=20, in=300,out=240}]   (3u)  --   (7.5,2) node[circle,fill,inner sep=2pt](311){} -- (7.5,1) node[circle,fill,inner sep=2pt](3101){} edge [loop below](7.5,0)  ;
\path[every loop/.style= {looseness=10, distance=20, in=300,out=240}]   (300a) edge [loop below]  () (300b) edge [loop below]();
\draw (3u) edge[bend right]  (311) edge[ bend left]  (311);
\draw (311) edge[bend right]  (3101) edge[ bend left]  (3101);
\node at (7.5,-.5){$A_3$};
\end{tikzpicture}\hfill
\begin{tikzpicture}[scale=0.6]
\draw  (4.5,3) node[circle,fill=lightgray,inner sep=2pt]{} -- (6,3) node[circle,fill,inner sep=2pt](300a){} --  (7.5,3) node[circle,fill,inner sep=2pt](3u){} --  (9,3) node[circle,fill,inner sep=2pt](300b){} -- (10.5,3) node[circle,fill,inner sep=2pt]{};
\path[-, every loop/.style= {looseness=10, distance=20, in=300,out=240}]   (3u)  --   (7.5,2) node[circle,fill,inner sep=2pt](311){} -- (7.5,1) node[circle,fill,inner sep=2pt](3101){} edge [loop below](7.5,0)  ;
\path[every loop/.style= {looseness=10, distance=20, in=300,out=240}]   (300a) edge [loop below]  () (300b) edge [loop below]();
\draw (3u) edge[bend right]  (311) edge[ bend left]  (311);
\draw (311) edge[bend right]  (3101) edge[ bend left]  (3101);
\node at (7.5,-.5){$B_3$};
\end{tikzpicture}\hfill
\begin{tikzpicture}[scale=0.6]
\draw  (4.5,3) node[circle,fill=lightgray,inner sep=2pt]{} -- (6,3) node[circle,fill,inner sep=2pt](300a){} --  (7.5,3) node[circle,fill,inner sep=2pt](3u){} --  (9,3) node[circle,fill,inner sep=2pt](300b){} -- (10.5,3) node[circle,fill=lightgray,inner sep=2pt]{};
\path[-, every loop/.style= {looseness=10, distance=20, in=300,out=240}]   (3u)  --   (7.5,2) node[circle,fill,inner sep=2pt](311){} -- (7.5,1) node[circle,fill,inner sep=2pt](3101){} edge [loop below](7.5,0)  ;
\path[every loop/.style= {looseness=10, distance=20, in=300,out=240}]   (300a) edge [loop below]  () (300b) edge [loop below]();
\draw (3u) edge[bend right]  (311) edge[ bend left]  (311);
\draw (311) edge[bend right]  (3101) edge[ bend left]  (3101);
\node at (7.5,-.5){$C_3$};
\end{tikzpicture}
\caption{Graphs $A_3$, $B_3$, $C_3$. Rows and columns corresponding to grey vertices are deleted in the corresponding matrices.}
\label{graphsABC}
\end{figure}

\begin{proposition}\label{prop:polydynamics}
For $n\geq4$ the characteristic polynomials $a_n$, $b_n$ and $c_n$ of the graphs $A_n$, $B_n$ and $C_n$ satisfy
\begin{align*}
a_n &= \bigl(2b_{n-1}-3\lambda c_{n-1}-2g_{n-1}\bigr) b_{n-2}^2 + 2a_{n-2}b_{n-2}c_{n-1},\\
b_n &=  \bigl(2b_{n-1}-3\lambda c_{n-1}-2g_{n-1}\bigr) b_{n-2}c_{n-2} + (a_{n-2}c_{n-2}+b_{n-2}^2)c_{n-1},\\
c_n &= \bigl(2b_{n-1}-3\lambda c_{n-1}-2g_{n-1}\bigr) c_{n-2}^2 + 2b_{n-2}c_{n-2}c_{n-1},
\end{align*}
where 
\begin{equation}\label{eqn:defnofgn}
	g_{n-1}=\prod_{1\leq j <\frac n2}\bigl( c_{n-2j}\bigr)^{2^{j-1}}.
	\end{equation}
\end{proposition}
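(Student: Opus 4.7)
The plan is to apply Lemma~\ref{lem:charpoly} at the central gluing vertex $u$ of $G_n$, supplemented by auxiliary identities obtained by applying the same lemma at degree-$1$ boundary vertices.

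For $n\ge 4$ the vertex $u$ has degree $4$ and four distinct single-edge neighbors: two vertices $v_1,v_2$ in the embedded copy of $G_{n-1}$, each adjacent to one of the two boundaries of $G_{n-1}$ (both identified with $u$), and two vertices $v_3,v_4$, one in each embedded $G_{n-2}$, each adjacent to the one boundary of that $G_{n-2}$ identified with $u$. A case analysis using the recursive tree-like structure of the $G_k$ shows that $G_n$ admits exactly one simple cycle $Z$ through $u$ of length at least three: it enters via $v_1$, follows the unique simple path from $v_1$ to $v_2$ inside $G_{n-1}$, and returns via $v_2$. The length $|Z|$ is always even and the edge-weight product satisfies $\pi(Z)=1$, so the cycle sum in Lemma~\ref{lem:charpoly} contributes $-2 D(L^Z)$.

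Applying Lemma~\ref{lem:charpoly} at a degree-$1$ boundary vertex gives the three auxiliary identities $a_{n-2}=(\lambda-1)b_{n-2}-e_{n-2}$, $b_{n-1}=(\lambda-1)c_{n-1}-f_{n-1}$, and $f_{n-2}=(\lambda-1)c_{n-2}-b_{n-2}$, where $e_{n-2}$ is the characteristic polynomial of $G_{n-2}$ with the $u$-boundary and its unique neighbor both removed, and $f_k$ is that of $G_k$ with both boundaries and one boundary-adjacent vertex removed. Next I would identify each characteristic polynomial appearing in the main expansion at $u$: the term $D(L^u)$ factors as $c_{n-1}$ times a product of two factors from $\{b_{n-2},c_{n-2}\}$ matching whether we compute $a_n$, $b_n$, or $c_n$; by the symmetry of the $G_{n-1}$-embed, $D(L^{u,v_1})=D(L^{u,v_2})$ and each equals $f_{n-1}$ times the same $G_{n-2}$-product; each of $D(L^{u,v_3})$ and $D(L^{u,v_4})$ equals $c_{n-1}$ times either $e_{n-2}$ or $f_{n-2}$ (from the $G_{n-2}$-embed whose boundary-adjacent is removed) times the matching factor from the undisturbed embed; and the cycle term satisfies $D(L^Z)=g_{n-1}$ times the same $G_{n-2}$-product, because removing the cycle vertices and the $u$-boundaries from the $G_{n-1}$-embed leaves a disjoint union of $C_{n-2j}$-isomorphic components with multiplicity $2^{j-1}$ at each depth $j$, giving $g_{n-1}=\prod_{1\le j<n/2}c_{n-2j}^{2^{j-1}}$.

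Substituting all contributions into Lemma~\ref{lem:charpoly} with $d_u=4$ and every $c_{uv}^2=1$ yields an expansion of the form
\begin{equation*}
a_n = (\lambda-4)c_{n-1}b_{n-2}^2 - 2 f_{n-1} b_{n-2}^2 - 2 c_{n-1} e_{n-2} b_{n-2} - 2 g_{n-1} b_{n-2}^2,
\end{equation*}
and substituting the auxiliary identities followed by elementary simplification yields the stated formula for $a_n$; the formulas for $b_n$ and $c_n$ follow by the same procedure with the appropriate $G_{n-2}$-factors (the $b_n$ calculation being asymmetric and requiring both $e_{n-2}$ and $f_{n-2}$). The main obstacle is the cycle-removal identification producing $g_{n-1}$ with the correct multiplicities $2^{j-1}$: one must inductively verify, using the recursive decomposition of $G_{n-1}$, that at each nested level the cycle path passes through the central gluing vertex of the sub-embed and disconnects two sibling $G_{n-1-2j}$-copies whose boundaries are already identified with removed vertices, thereby doubling the count of $C$-type components at the next depth.
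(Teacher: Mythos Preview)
Your argument is correct and follows essentially the same route as the paper: expand via Lemma~\ref{lem:charpoly} at the gluing vertex $u$, identify the resulting pieces as products of characteristic polynomials of the smaller graphs (your $e_{n-2}$ and $f_k$ are exactly the paper's $e_{n-2}$ and $d_k$), verify that deleting the unique long cycle through $u$ leaves the disjoint union producing $g_{n-1}$, and then eliminate the auxiliary polynomials using the degree-$1$ boundary identities $a_k=(\lambda-1)b_k-e_k$ and $b_k=(\lambda-1)c_k-d_k$. Your explicit remark that $n(Z)$ is even (so the cycle term carries sign $-2$) is a detail the paper leaves implicit but which is indeed needed.
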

\begin{proof}
Figure~\ref{add} illustrates the fact that $G_n$ can be obtained from one copy of $G_{n-1}$ and two copies of $G_{n-2}$ by identifying the two boundary vertices of $G_{n-1}$ and one boundary vertex from each copy of $G_{n-2}$ into a single vertex which we denote by $u$. We apply Lemma~\ref{lem:charpoly} to $L_n$ on $G_n$ with vertex $u$ to compute the characteristic polynomial.  This involves modifying the Laplacian matrix on various sets of vertices.  The subgraphs with modified vertices are $A_n$, $B_n$, and $C_n$ as in Figure~\ref{graphsABC} and also $D_n$, $E_n$ as in Figure~\ref{graphsDE}.

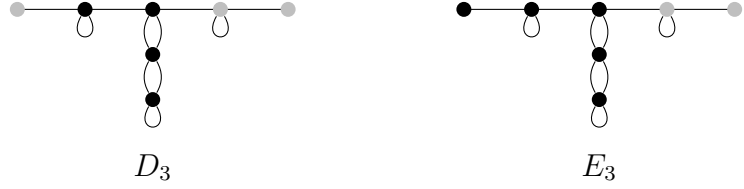
\begin{figure}
\centering
\begin{tikzpicture}[scale=0.6]
\draw  (4.5,3) node[circle,fill=lightgray,inner sep=2pt]{} -- (6,3) node[circle,fill,inner sep=2pt](300a){} --  (7.5,3) node[circle,fill,inner sep=2pt](3u){} --  (9,3) node[circle,fill=lightgray,inner sep=2pt](300b){} -- (10.5,3) node[circle,fill=lightgray,inner sep=2pt]{};
\path[-, every loop/.style= {looseness=10, distance=20, in=300,out=240}]   (3u)  --   (7.5,2) node[circle,fill,inner sep=2pt](311){} -- (7.5,1) node[circle,fill,inner sep=2pt](3101){} edge [loop below](7.5,0)  ;
\path[every loop/.style= {looseness=10, distance=20, in=300,out=240}]   (300a) edge [loop below]  () (300b) edge [loop below]();
\draw (3u) edge[bend right]  (311) edge[ bend left]  (311);
\draw (311) edge[bend right]  (3101) edge[ bend left]  (3101);
\node at (7.5,-.5){$D_3$};
\end{tikzpicture}\hspace{2cm}
\begin{tikzpicture}[scale=0.6]
\draw  (4.5,3) node[circle,fill,inner sep=2pt]{} -- (6,3) node[circle,fill,inner sep=2pt](300a){} --  (7.5,3) node[circle,fill,inner sep=2pt](3u){} --  (9,3) node[circle,fill=lightgray,inner sep=2pt](300b){} -- (10.5,3) node[circle,fill=lightgray,inner sep=2pt]{};
\path[-, every loop/.style= {looseness=10, distance=20, in=300,out=240}]   (3u)  --   (7.5,2) node[circle,fill,inner sep=2pt](311){} -- (7.5,1) node[circle,fill,inner sep=2pt](3101){} edge [loop below](7.5,0)  ;
\path[every loop/.style= {looseness=10, distance=20, in=300,out=240}]   (300a) edge [loop below]  () (300b) edge [loop below]();
\draw (3u) edge[bend right]  (311) edge[ bend left]  (311);
\draw (311) edge[bend right]  (3101) edge[ bend left]  (3101);
\node at (7.5,-.5){$E_3$};
\end{tikzpicture}
\caption{Graphs $D_3$ and $E_3$. Rows and columns corresponding to shaded vertices are deleted in the corresponding matrices.}
\label{graphsDE}
\end{figure}

For $n\geq4$ the point $u$ has one neighbor in each copy of $G_{n-2}$ as well as two neighbors in the copy of $G_{n-1}$ that lie on a simple cycle which was formed by identifying the boundary vertices.  Accordingly the vertex modifications involved in applying Lemma~\ref{lem:charpoly} are as follows. 

Modifying $A_n$ at $u$ gives the disjoint union of two copies of $B_{n-2}$ and one of $C_{n-1}$.  To modify on $\{u,v\}$ observe that  if $v$ is on one of the two copies of $G_{n-2}$ then the result is one copy of each of $B_{n-2}$, $E_{n-2}$ and $C_{n-1}$, while if $v$ is on the copy of $G_{n-1}$ then we see two copies of $B_{n-2}$ and one of $D_{n-1}$. The most interesting modification is that for the cycle. Modifying at $u$ turns the two copies of $G_{n-2}$ into two copies of $B_{n-2}$. The rest of the cycle runs along the shortest path in $G_{n-1}$ between the boundary points that were identified at $u$.  Modifying along this causes $G_{n-1}$ to decompose into the disjoint union of one, central, copy of $C_{n-2}$, two copies of $C_{n-4}$ equally spaced on either side and, inductively, $2^{j-1}$ copies of $C_{n-2j}$ for each $j$ such that $2j<n$, equally spaced between those obtained at the previous step. There are also loops along this path which now have no vertices and therefore each have characteristic polynomial $1$. The characteristic polynomial of this collection of $C_{n-2j}$ graphs is $g_{n-1}$.

If we write $d_n$ and $e_n$ for the characteristic polynomials of $D_n$ and $E_n$ respectively, then from the above reasoning we conclude that
\begin{equation}\label{dynampolyeqn1}
	a_n = (\lambda-4)b_{n-2}^2 c_{n-1} - 2 b_{n-2} e_{n-2} c_{n-1} -2 b_{n-2}^2 d_{n-1} - 2 b_{n-2}^2 g_{n-1}.
\end{equation}
Similar arguments beginning with $B_n$ or $C_n$ instead of $A_n$ allow us to verify that
\begin{align}
b_n &= (\lambda-4)b_{n-2}c_{n-2} c_{n-1} -  b_{n-2}d_{n-2} c_{n-1} -  c_{n-2}e_{n-2} c_{n-1}\notag\\
 &\quad - 2 b_{n-2} c_{n-2} d_{n-1} - 2  b_{n-2}c_{n-2} g_{n-1},\label{dynampolyeqn2}\\
c_n &= (\lambda-4)c_{n-2}^2 c_{n-1} - 2 c_{n-2}d_{n-2} c_{n-1} - 2 c_{n-2}^2 d_{n-1} - 2  c_{n-2}^2 g_{n-1}. \notag
\end{align}

Another use of Lemma~\ref{lem:charpoly}  allows us to relate some of our modified graphs to one another by performing one additional vertex modification.  For example, for $n\geq3$ we get $C_n$ from $B_n$ by modifying at one boundary vertex, and this vertex does not lie on a cycle.  Deleting the corresponding neighbor gives $D_n$, so we  have $b_n= (\lambda-1)c_n -d_n$.  In a like manner we obtain 
$a_n=(\lambda-1)b_n-e_n$.  These can be used to eliminate $d_n$ and $e_n$ from equations~\eqref{dynampolyeqn1} and \eqref{dynampolyeqn2} and obtain the desired conclusion.
\end{proof}

The initial polynomials $a_n$, $b_n$, $c_n$ for the recursion in Proposition~\ref{prop:polydynamics} are those with $0\leq n\leq 3$.  They may be computed for $n=0,1$ directly from the Laplacians of the graphs in Figure~\ref{graphsABC}.
\begin{align}
	a_0&= \lambda(\lambda-2)& & b_0= \lambda-1& & c_0= 1\notag\\
	a_1&= \lambda(\lambda-1)(\lambda-3) & & b_1= \lambda^2-3\lambda+1 & & c_1=\lambda-2 \label{eqn:startpolys1}
\end{align}
For $n=2,3$ we can use a variant of the argument in the proof of Proposition~\ref{prop:polydynamics}, taking the initial graph and modifying the connecting vertex $u$ by using Lemma~\ref{lem:charpoly}. In these cases there is no simple cycle, so we need only consider the self-interaction term and the terms corresponding to neighbors, of which there are three: one in the copy of $G_{n-1}$ which is connected by a double edge, so $c^2_{uv}=4$, and one in each of the copies of $G_{n-2}$.

For $A_2$ modifying $u$ gives a copy of $C_1$ and two of $B_0$.  Additionally modifying a neighbor in one of the two $G_{0}$ copies produces a $C_0$, a $B_0$ and a $C_1$, while deleting the neighbor in the copy of $G_1$ decomposes the whole graph into two $B_0$ copies and three $C_0$ copies. Since $c_0=1$ we suppress it in what follows. From this we have an equation for $a_2$.  Similar reasoning, noting that $u$ has fewer neighbors in $B_2$ and $C_2$, gives results for $b_2$ and $c_2$.  We summarize them as
\begin{align}
	a_2&= (\lambda-4)b_0^2 c_1 - 2b_0c_1 - 4b_0^2 = \lambda(\lambda^3-8\lambda^2+15\lambda-8), \notag\\
	b_2&= (\lambda-4)b_0 c_1 -  c_1  -4b_0= \lambda^3 -7\lambda^2+9\lambda-2, \label{eqn:startpolys2}\\
	c_2&= (\lambda-4)c_1  - 4 =\lambda^2 -6\lambda+4. \notag
	\end{align}

For $A_3$ things are more like they were in Proposition~\ref{prop:polydynamics}. Modifying at $u$ gives $C_2$ and two copies of $B_1$, additionally modifying at a neighbor in the $G_1$ copies gives a $C_2$, $B_1$ and $D_1$, but $D_1=B_0$. Modifying at $u$ and the neighbor in the $G_2$ copy gives a $C_1$ and two copies of $B_1$. Reasoning in the same manner for $B_3$ and $C_3$ we have
\begin{align}
	a_3&= (\lambda-4)b_1^2c_2 - 2 b_1c_2d_1 -4 b_1^2c_1\notag \\
	&= \lambda(\lambda-2)(\lambda^2-3\lambda+1)(\lambda^3-11\lambda^2+31\lambda-14), \notag\\
	b_3&=(\lambda-4)b_1c_1c_2 - b_1c_2 - b_0c_1c_2 - 4b_1c_1^2 \label{eqn:startpolys3} \\ 
	&= \lambda^6 - 15\lambda^5+ 79\lambda^4 -182\lambda^3 +181\lambda^2 -62\lambda+4,\notag\\
	c_3&= (\lambda-4)c_1^2 c_2 - 2c_1 c_2 - 4 c_1^3 \notag \\
		&= (\lambda-2) (\lambda^4-12\lambda^3+42\lambda^2-44\lambda+8). \notag
	\end{align}

\begin{proposition}\label{prop:cdynamics}
The characteristic polynomials $a_n$, $b_n$ and $c_n$ may be obtained from the initial data~\eqref{eqn:startpolys1},\eqref{eqn:startpolys2},\eqref{eqn:startpolys3} by using the following recursions, where we note that the recursion for $c_n$ involves only  $c$ terms (because the $g_n$ are products of $c_k$ terms, see~\eqref{eqn:defnofgn}), that for $b_n$ involves only $b$ and $c$ terms, and that for $a_n$ involves all three sequences. 
\begin{gather}
	\frac{c_n}{c_{n-2}}  =   \Bigl( \frac{c_{n-1}}{ c_{n-3}}\Bigr)^2 + 2  c_{n-1}g_{n-2}   -4c_{n-2} g_{n-1},\quad n\geq3,\label{eqn:cdynamics1}\\
	b_{2m}= c_{2m} \Bigl( b_0 - \sum_1^m \frac{g_{2j}}{c_{2j}} \Bigr), \qquad b_{2m+1}= c_{2m+1} \Bigl( \frac{b_1}{c_1} - \sum_1^m \frac{g_{2j+1}}{c_{2j+1}}\Bigr), \quad m\geq1, \label{eqn:cdynamics2}\\
	a_{n}c_n = b_n^2-g_n^2, \quad n\geq0. \label{eqn:cdynamics3}
	\end{gather}
\end{proposition}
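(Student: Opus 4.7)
The three identities are naturally proved by joint induction on $n$, using the three coupled recursions of Proposition~\ref{prop:polydynamics}. The base cases $n \leq 3$ can be checked directly against the explicit formulas~\eqref{eqn:startpolys1}--\eqref{eqn:startpolys3}. For the inductive step, it is convenient to write $K_{n-1} = 2b_{n-1} - 3\lambda c_{n-1} - 2g_{n-1}$, so the recursions become $a_n = K_{n-1} b_{n-2}^2 + 2a_{n-2}b_{n-2}c_{n-1}$, $b_n = K_{n-1} b_{n-2}c_{n-2} + (a_{n-2}c_{n-2} + b_{n-2}^2)c_{n-1}$, and $c_n = K_{n-1} c_{n-2}^2 + 2b_{n-2}c_{n-2}c_{n-1}$. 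I will also use the observation, immediate from~\eqref{eqn:defnofgn}, that $g_n = c_{n-1}\, g_{n-2}^2$.

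I would prove~\eqref{eqn:cdynamics3} first. Expanding $b_n^2 - a_n c_n$ directly, the $K_{n-1}^2 b_{n-2}^2 c_{n-2}^2$ terms cancel, the terms linear in $K_{n-1}$ cancel by inspection, and the remainder simplifies to $c_{n-1}^2 (a_{n-2}c_{n-2} - b_{n-2}^2)^2$. The inductive hypothesis gives $a_{n-2}c_{n-2} - b_{n-2}^2 = -g_{n-2}^2$, so $b_n^2 - a_n c_n = c_{n-1}^2 g_{n-2}^4 = g_n^2$, establishing~\eqref{eqn:cdynamics3}.

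For~\eqref{eqn:cdynamics2}, I compute $b_n c_{n-2} - b_{n-2} c_n$ similarly. The $K_{n-1}$ contributions cancel and one obtains $b_n c_{n-2} - b_{n-2} c_n = c_{n-1} c_{n-2}(a_{n-2} c_{n-2} - b_{n-2}^2) = -c_{n-2} c_{n-1} g_{n-2}^2 = -c_{n-2}\, g_n$, using~\eqref{eqn:cdynamics3} at $n-2$ and $g_n = c_{n-1} g_{n-2}^2$. Dividing by $c_n c_{n-2}$ gives the telescoping identity $b_n/c_n = b_{n-2}/c_{n-2} - g_n/c_n$, which iterates down to $b_0/c_0$ or $b_1/c_1$, producing~\eqref{eqn:cdynamics2}.

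For~\eqref{eqn:cdynamics1}, the recursion for $c_n$ yields $c_n/c_{n-2} = K_{n-1} c_{n-2} + 2 b_{n-2} c_{n-1}$, and the same recursion at level $n-1$ gives $c_{n-1}/c_{n-3} = K_{n-2} c_{n-3} + 2 b_{n-3} c_{n-2}$. Multiplying the latter equation through by $c_{n-1}/c_{n-3}$ yields
\begin{equation*}
\Bigl(\frac{c_{n-1}}{c_{n-3}}\Bigr)^2 = K_{n-2}\, c_{n-1} + \frac{2 b_{n-3}\, c_{n-2} c_{n-1}}{c_{n-3}}.
\end{equation*}
The key step is to replace the awkward $b_{n-3}/c_{n-3}$ using~\eqref{eqn:cdynamics2}, which (rewritten) says $b_{n-3}/c_{n-3} = (b_{n-1} + g_{n-1})/c_{n-1}$; this is the main manipulative point in the argument. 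Substituting and expanding $K_{n-2}$ gives
\begin{equation*}
\Bigl(\frac{c_{n-1}}{c_{n-3}}\Bigr)^2 = (2b_{n-1} - 3\lambda c_{n-1} + 2 g_{n-1}) c_{n-2} + 2(b_{n-2} - g_{n-2}) c_{n-1},
\end{equation*}
and subtracting this from $c_n/c_{n-2} = (2b_{n-1} - 3\lambda c_{n-1} - 2g_{n-1})c_{n-2} + 2b_{n-2} c_{n-1}$ yields exactly~\eqref{eqn:cdynamics1}. The whole argument is elementary algebra; the only genuinely delicate point is spotting the substitution $b_{n-3}/c_{n-3} = (b_{n-1} + g_{n-1})/c_{n-1}$ that makes the squared term tractable.
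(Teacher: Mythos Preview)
Your proof is correct and takes essentially the same route as the paper's.  Both arguments rest on the two computations
\[
b_n^2-a_nc_n = c_{n-1}^2(a_{n-2}c_{n-2}-b_{n-2}^2)^2,\qquad
b_nc_{n-2}-b_{n-2}c_n = c_{n-1}c_{n-2}(a_{n-2}c_{n-2}-b_{n-2}^2),
\]
combined with $g_n=c_{n-1}g_{n-2}^2$, which yield the telescoping relation $b_n/c_n-b_{n-2}/c_{n-2}=-g_n/c_n$.  The paper obtains these slightly more indirectly, first deriving the quadratic identity $a_nc_{n-2}^2-2b_nb_{n-2}c_{n-2}+c_nb_{n-2}^2=0$ and then iterating the second computation down to the initial data; your clean joint induction, proving~\eqref{eqn:cdynamics3} first, is a mildly tidier packaging of the same algebra.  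For~\eqref{eqn:cdynamics1} the paper instead rewrites the $c_n$ recursion as~\eqref{eqn:cnfrombn} and subtracts two consecutive instances, whereas you square $c_{n-1}/c_{n-3}$ and substitute $b_{n-3}/c_{n-3}=(b_{n-1}+g_{n-1})/c_{n-1}$; these manipulations are equivalent.

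One small technical point: your derivation of~\eqref{eqn:cdynamics1} invokes Proposition~\ref{prop:polydynamics} at level $n-1$, which is stated only for $n\ge4$; hence your inductive step covers $n\ge5$ and the case $n=4$ of~\eqref{eqn:cdynamics1} should be added to the directly-verified base cases.  (The paper's argument has the same feature.)
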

\begin{proof}
Multiplying the $a_n$ equation in Proposition~\ref{prop:polydynamics} by $c_{n-2}^2$, the $b_n$ one by $-2b_{n-2}c_{n-2}$ and the $c_n$ one by $b_{n-2}^2$ and summing the results gives the following relationship for $n\geq 4$:
\begin{equation*}
	a_n c_{n-2}^2 -2b_n b_{n-2}c_{n-2} +c_nb_{n-2}^2 = 0,
	\end{equation*}
which can also be verified for $n=2,3$ from~\eqref{eqn:startpolys1},\eqref{eqn:startpolys2}, and~\eqref{eqn:startpolys3}. We use it to eliminate $a_{n-2}$ from the equation for $b_n$ and thereby obtain recursions for $b_n$ and $c_n$ that do not involve the sequence $a_n$. It is convenient to do so by computing (in the case that $c_{n-2}\neq0$)
\begin{align}
	a_n c_n - b_n^2  &= \frac{1}{c_{n-2}^2} \bigl( 2b_n c_n b_{n-2}c_{n-2} - c_n^2 b_{n-2}^2 - b_n^2c_{n-2}^2 \bigr) \notag\\
				&=  \frac{-(b_nc_{n-2} - b_{n-2}c_n)^2 }{c_{n-2}^2} \quad \text{when $n\geq2$},\label{dynamicpolyeqn4}
	\end{align}
because we may now compute from Proposition~\ref{prop:polydynamics} and apply~\eqref{dynamicpolyeqn4} with $n$ replaced by $n-2$ to obtain for $n\geq4$
\begin{align*}
	b_n c_{n-2} - c_n b_{n-2}
	&= c_{n-1} \bigl( a_{n-2}c_{n-2}^2 + b_{n-2}^2 c_{n-2} -2b_{n-2}^2c_{n-2} \bigr)\\
	&= c_{n-2} c_{n-1} \bigl( a_{n-2}c_{n-2} - b_{n-2}^2  \bigr)\\
	&= \frac{-c_{n-2}c_{n-1}(b_{n-2}c_{n-4}-b_{n-4}c_{n-2})^2}{c_{n-4}^2}.
	\end{align*}
We can use this to get, for $n\geq4$,
\begin{equation*}
	b_n - \frac{c_n}{c_{n-2}} b_{n-2}
	= -c_{n-1}c_{n-3}^2 c_{n-5}^4\dotsm 
	\begin{cases}
		c_3^{2^{(n-4)/2}} ( b_2- c_2b_0)^{2^{(n-2)/2}} &\text{ if $n$ is even,}\\
		c_4^{2^{(n-5)/2}} ( b_3- c_3b_1/c_1)^{2^{(n-3)/2}} &\text{ if $n$ is odd,}
		\end{cases}
	\end{equation*}
however one may compute directly from~\eqref{eqn:startpolys1}, \eqref{eqn:startpolys2} and~\eqref{eqn:startpolys3} that $ b_2- c_2b_0=-c_1$ and $b_3-c_3b_1/c_1=-c_2$, so that for $n\geq2$
\begin{equation}\label{eqn:bn}
	b_n - \frac{c_n}{c_{n-2}} b_{n-2} = - g_n,
	\end{equation}
from which we obtain the expressions in~\eqref{eqn:cdynamics2} by summation and~\eqref{eqn:cdynamics3} by substitution into~\eqref{dynamicpolyeqn4}.  We also have~\eqref{eqn:cdynamics3} for $n=0,1$ by~\eqref{eqn:startpolys1} and $g_0=g_1=1$.

We may also use this to eliminate $b_n$ from the expression for $c_n$ in Proposition~\ref{prop:polydynamics}.  A convenient way to do so is to rewrite the equation for $c_n$ as
\begin{equation}\label{eqn:cnfrombn}
	\frac{c_n}{c_{n-1}c_{n-2}^2} = 2\Bigl( \frac{b_{n-1}}{c_{n-1}} + \frac{b_{n-2}}{c_{n-2}} -\frac{g_{n-1}}{c_{n-1}} \Bigr) -3\lambda,
	\end{equation}
which holds for $n\geq4$ and can be checked for $n=2,3$ from ~\eqref{eqn:startpolys1}, \eqref{eqn:startpolys2} and~\eqref{eqn:startpolys3}, and use~\eqref{eqn:bn} to eliminate the $b_{n-1}/c_{n-1}$ term.  Comparing the result with~\eqref{eqn:cnfrombn} for the case $n-1$ we have, for $n\geq3$, both
\begin{align*}
	\frac{c_n}{c_{n-1}c_{n-2}^2} &= 2\Bigl(  \frac{b_{n-2}}{c_{n-2}} +\frac{b_{n-3}}{c_{n-3}}  -\frac{2g_{n-1}}{c_{n-1}} \Bigr) -3\lambda,,\\
	\frac{c_{n-1}}{c_{n-2} c_{n-3}^2} &= 2\Bigl( \frac{b_{n-2}}{c_{n-2}} + \frac{b_{n-3}}{c_{n-3}} -\frac{g_{n-2}}{c_{n-2}} \Bigr) -3\lambda,
	\end{align*}
the difference of which is
\begin{equation*}
	\frac{c_n}{c_{n-1}c_{n-2}^2} - \frac{c_{n-1}}{c_{n-2} c_{n-3}^2}
	= 2  \frac{g_{n-2}}{c_{n-2}}   -4\frac{g_{n-1}}{c_{n-1}}
	\end{equation*}
and may be rearranged to give~\eqref{eqn:cdynamics1}.
\end{proof}

\subsection{Localized Eigenfunctions and factorization of characteristic polynomials.}\label{subsec:locefns}

In this section we consider the spectrum of the Dirichlet Laplacian on $G_n$, for which the characteristic polynomial is $c_n$.  
We define $\gamma_0=c_0=1$ and  recursively take  $\gamma_n$ to have no roots in common with $\gamma_k$ for $k<n$ and such that
\begin{equation}\label{eqn:recursionforcn}
	c_n=\gamma_n\prod_{k=1}^{n-1} \gamma_k^{s_{n,k}}.
\end{equation}
for some indices $s_{n,k}\geq0$.  The main goal of the section is to give a recursive formula for the indices $s_{n,k}$; this is achieved in Theorem~\ref{thm:factorizationofcn} as a consequence of a description of certain eigenfunctions in Theorem~\ref{thm:structure of DN efns}.  A key feature of this description is the construction of eigenfunctions that satisfy both Dirichlet and Neumann boundary conditions, which we label DN-eigenfunctions.

We now fix an integer $m\geq1$ and a root $\lambda$ of $\gamma_m$.  The proofs of the preceding theorems require us to study the solutions of $Lf=\lambda f$ on $G_n\setminus \partial G_n$ for $n>m$. Throughout the section $f$ will refer to such a solution, though $n$ will change.  Although our main argument is an induction on $n$, the first few cases $n=m,m+1,m+2$ are a little different than the others, so are done in separate lemmas.  

We need a small amount more notation in order to proceed. Since $\lambda$ is not a root of $\gamma_k$ for $k<n$ it is not a Dirichlet eigenvalue of the Laplacian on $G_k$, $k<n$.  In particular, there is a unique solution $h_k$ to the boundary value problem $Lh_k=\lambda h_k$ on $G_k\setminus\partial G_k$ with data $1$ at one boundary point and $0$ at the other boundary point.  We will refer to these functions in our diagrams of solutions on $G_k$ for larger $k$ below.   We will also need notation for the Laplacian of these functions at the endpoints, which only involves the edge difference at the boundary point. We will call this the Neumann derivative.  By a slight abuse of notation we denote the Neumann derivative of $h_k$ at the boundary point where $h_k=0$ by $\partial h_k(0)$ and similarly that at the boundary point where $h_k=1$ by $\partial h_k(1)$. The usefulness of these is that when the boundary points of copies of $G_{n-2}$ and $G_{n-1}$ are identified to produce $G_n$ we obtain the Laplacian at the gluing point by summing the Neumann derivatives at the points that were glued. This fact will be used without further comment.

The final thing for which we need notation is a symmetry of $G_n$. Recall that $G_n$ is constructed from two copies of $G_{n-2}$, labelled ``left'' and ``right'', and one copy of $G_{n-1}$ with identification of boundary points to a single gluing point $u$ as in Figure~\ref{add}.  We define a graph isomorphism $\Phi_n$ on $G_n$ to swap the labels on the copies of $G_{n-2}$ (here it is assumed this fixes the gluing point) and to restrict to give the map $\Phi_{n-1}$ on the copy of $G_{n-1}$.

\begin{lemma}\label{lem:adjpointszeroimpliesvanishes}
Suppose $Lf=\lambda f$ on $G_n\setminus\partial G_n$ for some $n\leq m$. If $f$ vanishes at two adjacent points on the shortest path between the boundary points then it is identically zero.  In particular, $\partial h_k(0)$ is non-zero for $k\leq m$.
\end{lemma}
\begin{proof}
The shortest path between boundary points is an interval containing vertices at which copies of $G_k$, $k<m$ are attached.  Take the two vertices at which $f$ vanishes, and a point adjacent to one of them on the path, and label these in order as $x$, $y$, $z$ with $f(x)=f(y)=0$. Since $\lambda$ is not a Dirichlet eigenvalue for $G_k$, $f$ must vanish identically on any graph attached at $y$ and thus $0=\lambda f(y)=Lf(y)=2f(y)-f(x)-f(z)=-f(z)$.  The fact that $f$ vanishes at any neighbor of two adjacent zeros of $f$ implies $f\equiv0$ on the shortest path by connectedness, thus on all attached graphs as already mentioned, and therefore on $G_n$.  For the last statement, if $\partial h_k(0)=0$ then $h_k$ vanishes at the boundary point and its neighbor, so is identically zero in contradiction to the fact that it is $1$ at the other boundary point.
\end{proof}

\begin{figure}
\centering
\begin{tikzpicture}
[vertex/.style={circle,fill,inner sep=2pt},node distance=1.5cm]
\node[vertex] (bdychain11) at (0,0) [label=above:$$] {};
\node[vertex] (bdychain12) [right=of bdychain11, label=above:$u$]{};
\node[vertex] (bdychain13) [right=of bdychain12] [label=above:$$] {};
\draw (bdychain11)-- node[below]{$G_{m-2}$} (bdychain12) -- node[below]{$G_{m-2}$} (bdychain13);
\path[-, every loop/.style= {looseness=10, distance=40, in=300,out=240}]   (bdychain12) edge [loop below] node{$G_{m-1}$}();
\node[vertex] (bdychain21) [right=of bdychain13, label=above:$0$]{};
\node[vertex] (bdychain22) [right=of bdychain21, label=above:$1$]{};
\node[vertex] (bdychain23) [right=of bdychain22] [label=above:$0$] {};
\draw (bdychain21)-- node[below]{$h_{m-2}$} (bdychain22) -- node[below]{$h_{m-2}$} (bdychain23);
\path[-, every loop/.style= {looseness=10, distance=40, in=300,out=240}]   (bdychain22) edge [loop below] node{}();
\node[vertex] (bdychain31)  [right=of bdychain23, label=above:$1$]{};
\node[vertex] (bdychain32) [right=of bdychain31, label=above:$0$]{};
\node[vertex] (bdychain33) [right=of bdychain32] [label=above:$-1$] {};
\draw (bdychain31)-- node[below]{$h_{m-2}$} (bdychain32) -- node[below]{$-h_{m-2}$} (bdychain33);
\path[-, every loop/.style= {looseness=10, distance=40, in=300,out=240}]   (bdychain32) edge [loop below] node{}();
\end{tikzpicture}
\caption{Decomposition of $G_m$ (left); Dirichlet eigenfunction $h$ (center); Antisymmetric solution of $L f=\lambda f$ on $G_m\setminus\partial G_m$ (right)}
\label{fig:efnsGm}
\end{figure}

\begin{proposition}\label{prop:efnsGm}
The eigenvalue $\lambda$ is simple. We take as a basis element the eigenfunction normalized to have value $1$ at the gluing point $u$. We denote the eigenfunction by $h$ and depict in the center of Figure~\ref{fig:efnsGm}. It is symmetric and has non-zero Neumann derivative $\partial h_{m-2}(0)$ at both boundary points.  There is one other solution to $Lf=\lambda f$ on $G_m\setminus\partial G_m$, which is depicted on the right of Figure~\ref{fig:efnsGm}. It has Neumann derivatives $\pm\partial h_{m-2}(1)$.
\end{proposition}

\begin{proof}
Since $\lambda$ is a root of $\gamma_m$ there is an eigenfunction on $G_m$.  Its value at the gluing point $u$ determines the function uniquely on $G_m$ because it and the values on $\partial G_m$ serve as boundary data on the copies of $G_{m-2}$ and $G_{m-1}$ in $G_m$ and $\lambda$ is not a Dirichlet eigenvalue for these graphs.  This shows the eigenspace is one-dimensional and allows us to normalize to get basis element $h$ with $h(u)=1$ as in the center of Figure~\ref{fig:efnsGm}.  The boundary data is $\Phi_m$-symmetric so  $h$ is $\Phi_m$-symmetric. It is apparent from the diagram that its Neumann derivative is  $\partial h_{m-2}(0)$, and this is non-zero by Lemma~\ref{lem:adjpointszeroimpliesvanishes}.

To see that the antisymmetric function depicted on the right of  Figure~\ref{fig:efnsGm} is a solution of $Lf=\lambda f$ on $G_m\setminus\partial G_m$ we need only check the equation holds at the gluing point. The function vanishes on the copy of $G_{m-1}$ because it is zero at the boundary points, both of which are at $u$, so there is no Neumann derivative from this subgraph. Antisymmetry ensures the Neumann derivatives from the copies of $G_{m-2}$ cancel at the gluing point, verifying $Lf(u)=\lambda f(u)=0$ there.

It remains to see that there are no other solutions of $Lf=\lambda f$ on $G_m\setminus\partial G_m$. Any such $f$ could be assumed $\Phi_m$-symmetric by subtracting a copy of the antisymmetric solution and to have $f(u)=0$ by subtracting a copy of $h$.  But then it would be identically zero on the copy of $G_{m-1}$ and equal to symmetrically arranged copies of $h_{m-2}$ on the copies of $G_{m-2}$. The sum of the Neumann derivatives at $u$ would then be a non-zero multiple of $\partial h_{m-2}(0)\neq0$ in contradiction to $Lf(u)=\lambda f(u)$, so there is no such solution.
\end{proof}

\begin{corollary}\label{cor:sumofNeumderivs}
$2(\partial h_{m-2}(1)+\partial h_{m-1}(0)+\partial h_{m-1}(1))=\lambda$.
\end{corollary}
\begin{proof}
For the eigenfunction $h$ in the proposition we have $\lambda=\lambda h(u)=Lh(u)$ is the sum of the Neumann derivatives from the subgraphs glued at $u$. Two are the copies of $G_{m-2}$ which each provide Neumann derivative $\partial h_{m-2}(1)$.  The other is the copy of $G_{m-1}$ with both boundary values equal to $1$. It is apparent that this function is the sum of $h_{m-1}$ and a copy of $h_{m-1}$ reflected via $\Phi_{m-1}$, so each boundary point has Neumann derivative $\partial h_{m-1}(0)+\partial h_{m-1}(1)$. Summing two copies of this with the contributions from $G_{m-2}$ gives the formula.
\end{proof}

\begin{figure}
\centering
\begin{tikzpicture}
[vertex/.style={circle,fill,inner sep=2pt},node distance=1.5cm]
\node[vertex] (bdychain11) at (0,0) [label=above:$$] {};
\node[vertex] (bdychain12) [right=of bdychain11, label=above:$u$]{};
\node[vertex] (bdychain13) [right=of bdychain12] [label=above:$$] {};
\draw (bdychain11)-- node[below]{$G_{m-1}$} (bdychain12) -- node[below]{$G_{m-1}$} (bdychain13);
\path[-, every loop/.style= {looseness=10, distance=40, in=300,out=240}]   (bdychain12) edge [loop below] node{$G_{m}$}();
\node[vertex] (bdychain21) [right=of bdychain13, label=above:$1$]{};
\node[vertex] (bdychain22) [right=of bdychain21, label=above:$0$]{};
\node[vertex] (bdychain23) [right=of bdychain22] [label=above:$0$] {};
\draw (bdychain21)-- node[below]{$h_{m-1}$} (bdychain22) -- node[below]{$0$} (bdychain23);
\path[-, every loop/.style= {looseness=10, distance=40, in=300,out=240}]   (bdychain22) edge [loop below] node{$\kappa h$}();
\node[vertex] (bdychain31)  [right=of bdychain23, label=above:$0$]{};
\node[vertex] (bdychain32) [right=of bdychain31, label=above:$0$]{};
\node[vertex] (bdychain33) [right=of bdychain32] [label=above:$1$] {};
\draw (bdychain31)-- node[below]{$0$} (bdychain32) -- node[below]{$h_{m-1}$} (bdychain33);
\path[-, every loop/.style= {looseness=10, distance=40, in=300,out=240}]   (bdychain32) edge [loop below] node{$\kappa h$}();
\end{tikzpicture}
\caption{Decomposition of $G_{m+1}$ and solutions of $L f=\lambda f$ on $G_{m+1}\setminus \partial G_{m+1}$.}
\label{fig:efnsGm+1}
\end{figure}

\begin{lemma}\label{lem:efnsGm+1}
On $G_{m+1}$ there are no eigenfunctions with eigenvalue $\lambda$. There is a unique constant $\kappa\neq0$ such that a basis for the solutions of $Lf=\lambda f$ on $G_{m+1}\setminus\partial G_{m+1}$ is as shown in Figure~\ref{fig:efnsGm+1}.  These basis elements satisfy both Dirichlet and Neumann boundary conditions at one boundary point and have Neumann derivative $\partial h_{m-1}(1)$ at the other boundary point.
\end{lemma}
\begin{proof}
We see that $f$ must satisfy the same equation on the copy of $G_m$ inside $G_{m+1}$ and has both boundary values equal to each other on this copy. From Proposition~\ref{prop:efnsGm} it is then a multiple of the eigenfunction $h$, so at the gluing point $f(u)=0$.  It follows that $f$ is determined entirely on the copies of $G_{m-1}$ by its data on $\partial G_{m+1}$, so the lemma is proved once we show there is $\kappa\neq0$ that makes the functions in Figure~\ref{fig:efnsGm+1} satisfy the equation, which is simply a matter of checking we can make $Lf(u)=\lambda f(u)=0$.

In the diagrams the Neumann derivative from one copy of $G_{m-1}$ is zero and from the other is $\partial h_{m-1}(0)$, which is non-zero by Lemma~\ref{lem:adjpointszeroimpliesvanishes}. If we glue the boundary points in a copy $G_m$ carrying the eigenfunction $h$, the resulting Neumann derivative is $2\partial h_{m-2}(0)\neq0$ by Proposition~\ref{prop:efnsGm}. Now $Lf(u)=0$ if and only if $2\kappa\partial h_{m-2}(0)=-\partial h_{m-1}(0)$, so $\kappa$ is unique and non-zero.
\end{proof}

The Dirichlet-Neumann boundary conditions at one boundary point of $G_{m+1}$ allow us to extend to any graph glued at that point while retaining the condition that $Lf=\lambda f$.  A useful consequence follows.

\begin{corollary}\label{cor:allefnsonGmodd}
On $G_n$ with $n>m$ and $n-m$ odd, each of the boundary points is also a boundary point for a copy of $G_{m+1}$. Setting $f$ to be the function in Figure~\ref{fig:efnsGm+1} on this copy of $G_{m+1}$ and $f\equiv0$ on the rest of $G_n$ defines a solution to $Lf=\lambda f$ on $G_n\setminus\partial G_n$.
\end{corollary}

\begin{lemma}\label{lem:efnsGm+2}
The solutions of  $Lf=\lambda f$ on $G_{m+2}\setminus\partial G_{m+2}$ are as shown in Figure~\ref{fig:efnsGm+2}.  If the boundary points are identified then the solution on the left has both Dirichlet and Neumann conditions at the identified point.
\end{lemma}

\begin{proof}
We first check that the two functions shown are solutions to the equation, which only requires that we verify $Lf(u)=\lambda f(u)$ at the gluing point $u$.  For the function on the left of Figure~\ref{fig:efnsGm+2} this is easy:  $f$ vanishes on the copy of $G_{m+1}$ so this makes no contribution to $Lf(u)$, and the antisymmetry ensures the Neumann derivatives from the two copies of $G_m$ cancel, giving $Lf(u)=0$ which matches $\lambda f(u)$ in this case.

The function on the right of Figure~\ref{fig:efnsGm+2} requires slightly more explanation. We have $f(u)=-1$ at  both boundary points of the copy of $G_{m+1}$ glued at $u$. This uniquely defines the restriction of $f$ to this copy to be the negative of the sum of the basis elements from Lemma~\ref{lem:efnsGm+1}.  In particular it is $-2\kappa h$ on the copy of $G_m$ inside this $G_{m+1}$ and its Neumann derivative at $u$ is $-2\partial h_{m-1}(1)$.  On the copies of $G_m$ we have that $f$ is the antisymmetric function seen on the right in Figure~\ref{fig:efnsGm} minus $2\kappa h$, where $h$ is the eigenfunction from the left of the same figure.  The Neuman derivative of the antisymmetric function is $-\partial h_{m-2}(1)$ and the Neumann derivative of the eigenfunction is $\partial h_{m-2}(0)$, both of which were determined in Proposition~\ref{prop:efnsGm}, giving a total of $-2\partial h_{m-2}(1)+4\kappa \partial h_{m-2}(0)$ from the two copies of $G_m$. However, $2\kappa\partial h_{m-2}(0)=-\partial h_{m-1}(0)$ by Lemma~\ref{lem:efnsGm+1}. Thus the sum of the Neumann derivatives from $G_{m-1}$ and the two copies of $G_m$ is $-2(\partial h_{m-1}(1)+\partial h_{m-2}(1)+\partial h_{m-1}(0))$ and this is $-\lambda=\lambda f(u)$ by the formula established in Corollary~\ref{cor:sumofNeumderivs}.

To show all solutions of  $Lf=\lambda f$ on $G_{m+2}\setminus \partial G_{m+2}$ are in the span of those described above, notice that the restriction of $f$ to the copies of $G_m$ must be linear combinations of the two functions in Figure~\ref{fig:efnsGm} by Proposition~\ref{prop:efnsGm}. Continuity at $u$ then restricts their boundary values and value at $u$ to be a multiple of those for the second function we have considered (on the right of Figure~\ref{fig:efnsGm+2}), so by subtracting this multiple we may assume $f$ is zero on $\partial G_{m+2}$ and at $u$.  It follows that the restriction of $f$ to each copy of $G_m$ is a multiple of the eigenfunction $h$.  Moreover, $f(u)=0$ is the value at both boundary points of the copy of $G_{m+1}$ in $G_{m+2}$. Since $\lambda$ is not a Dirichlet eigenvalue of this subgraph by Lemma~\ref{lem:efnsGm+1} we have $f\equiv0$ the $G_{m+1}$ copy.  This shows $Lf(u)=\lambda f(u)=0$ is the sum of the Neumann derivatives of the multiples of $h$ on the copies of $G_m$, and since $h$ has non-zero Neumann derivative the only possibility is that the multiples are equal in magnitude and opposite in sign, whence $f$ is a multiple of the function on the left of Figure~\ref{fig:efnsGm+2}.
\end{proof}

\begin{figure}
\centering
\begin{tikzpicture}
[vertex/.style={circle,fill,inner sep=2pt},node distance=1.5cm]
\node[vertex] (bdychain11) at (0,0) [label=above:$0$] {};
\node[vertex] (bdychain12) [right=of bdychain11, label=above:$1$]{};
\node[vertex] (bdychain13) [right=of bdychain12] [label=above:$0$] {};
\node[vertex] (bdychain14) [right=of bdychain13] [label=above:$-1$] {};
\node[vertex] (bdychain15) [right=of bdychain14] [label=above:$0$] {};
\node[vertex] (decoration1) [below=of bdychain13] [label=right:$0$] {};

\draw (bdychain11)-- node[below]{$$} (bdychain12) -- node[below]{$$} (bdychain13) -- node[below]{$$} (bdychain14) -- node[below]{$$} (bdychain15);
\draw (bdychain13) edge[bend right] node[left]{$$} (decoration1) edge[ bend left] node[right]{$$} (decoration1);

\path[-, every loop/.style= {looseness=10, distance=20, in=300,out=240}]   (bdychain12) edge [loop below] node{$$}();
\path[-, every loop/.style= {looseness=10, distance=20, in=300,out=240}]   (bdychain14) edge [loop below] node{$$}();
\path[-, every loop/.style= {looseness=10, distance=20, in=300,out=240}]   (decoration1) edge [loop below] node{$$}();

\node (leftpiece) [below=of bdychain12,yshift=10mm] [label=below:$h$]{};
\node (leftpiece) [below=of bdychain14,yshift=10mm] [label=below:$-h$]{};

\node[vertex] (bdychain21) [right=of bdychain15] [label=above:$1$] {};
\node[vertex] (bdychain22) [right=of bdychain21, label=above:$2\kappa$]{};
\node[vertex] (bdychain23) [right=of bdychain22] [label=above:$-1$] {};
\node[vertex] (bdychain24) [right=of bdychain23] [label=above:$2\kappa$] {};
\node[vertex] (bdychain25) [right=of bdychain24] [label=above:$1$] {};
\node[vertex] (decoration2) [below=of bdychain23] [label=right:$0$] {};

\draw (bdychain21)-- node[below]{$$} (bdychain22) -- node[below]{$$} (bdychain23) -- node[below]{$$} (bdychain24) -- node[below]{$$} (bdychain25);

\draw (bdychain23) edge[bend right] node[left]{$$} (decoration2) edge[ bend left] node[right]{$$} (decoration2);

\path[-, every loop/.style= {looseness=10, distance=20, in=300,out=240}]   (bdychain22) edge [loop below] node{$$}();
\path[-, every loop/.style= {looseness=10, distance=20, in=300,out=240}]   (bdychain24) edge [loop below] node{$$}();
\path[-, every loop/.style= {looseness=10, distance=20, in=300,out=240}]   (decoration2) edge [loop below] node{$-2\kappa h$}();
\end{tikzpicture}
\caption{Solutions of $Lf=\lambda f$ on $G_{m+2}\setminus \partial G_{m+2}$.}
\label{fig:efnsGm+2}
\end{figure}

\begin{lemma}\label{lem:inductionforefns}
For $n\geq m+3$ the only solutions to $Lf=\lambda f$ on $G_n\setminus\partial G_n$ that are not DN eigenfunctions are as follows:
\begin{enumerate}[(1)]
\item If $n-m$ is odd, the functions described in Corollary~\ref{cor:allefnsonGmodd}.
\item If $n-m$ is even, a single eigenfunction obtained by copying the eigenfunction that is Dirichlet but not Neumann on $G_{n-2}$ onto both copies of $G_{n-2}$ in a $\Phi_n$-antisymmetric fashion and setting $f\equiv0$ on the copy of $G_{n-1}$. This has Neumann derivatives $\pm\partial h_{m-2}(0)$ at its boundary points.
\end{enumerate}
\end{lemma}
\begin{proof}
We induct on $n$ and use the fact that the restriction of $f$ to the copies of $G_{n-2}$ and $G_{n-1}$ satisfy the same equation so have the form described in Lemma~\ref{lem:efnsGm+1} and Lemma~\ref{lem:efnsGm+2} or, by the inductive hypothesis, the form given in the statement of this lemma.

The easier situation is when $n-m$ is odd. By subtracting the known eigenfunctions from Corollary~\ref{cor:allefnsonGmodd} we can assume $f=0$ on $\partial G_n$.  We know the restriction to the copies of $G_{n-2}$ is one of the functions from Lemma~\ref{lem:efnsGm+1} in the base case $n=m+3$ or, by the inductive hypothesis, one of the functions from Corollary~\ref{cor:allefnsonGmodd}  if $n\geq m+5$. In either case we see that $f=0$ at the boundary point implies $f\equiv0$ on the copy of $G_{m-1}$ that includes this boundary point, so the function is DN.

The argument when $n-m$ is even is a little more complicated. We first consider $n=m+4$ in which the restriction of $f$ to the copies of $G_{n-2}=G_{m+2}$ must be as in Figure~\ref{fig:efnsGm+2}. This implies $f$ has the same value on $\partial G_n$ and at the gluing point $u$.

We show by contradiction that we cannot have $f(u)\neq0$, for which by scaling it suffices to consider the case $f(u)=1$.  If we did, then the restriction to the copies of $G_{m+2}$ is the function on the right of  Figure~\ref{fig:efnsGm+2}, which has Neumann derivative $\partial h_{m-2}(1)+2\kappa \partial h_{m-2}(0)$ at both boundary points.  From the formula in the proof of Lemma~\ref{lem:efnsGm+1} this is $\partial h_{m-2}(1)-\partial h_{m-1}(0)$, so the contribution to the Laplacian at $u$ of the two copies is $2(\partial h_{m-2}(1)-\partial h_{m-1}(0))$.  At the same time, $f(u)=1$ implies the the restriction of $f$ to the copy of $G_{n+3}$ has value $1$ at both boundary points.  By the inductive hypothesis this is the sum of the functions in Corollary~\ref{cor:allefnsonGmodd}, so has Neumann derivative $2\partial h_{m-1}(1)$ at the gluing point.  Thus $Lf(u)=2\bigl(\partial h_{m-2}(1)-\partial h_{m-1}(0)+\partial h_{m-1}(1)\bigr)$.  According to Corollary~\ref{cor:sumofNeumderivs} this is $\lambda-4\partial h_{m-1}(0)$ and since $\partial h_{m-1}(0)$ is non-zero from Lemma~\ref{lem:adjpointszeroimpliesvanishes} we see that $Lf(u)\neq \lambda=\lambda f(u)$.  Thus there is no solution built from these solutions on $G_{n-2}$.

It follows that we must have $f=0$ at $u$ and on $\partial G_{m+4}$.   We see that the same is true in the case $n\geq 6$ by the inductive hypothesis, because then the restriction of $f$ to both copies of $G_{n-2}$ must be a multiple of the Dirichlet eigenfunction. In this situation the restriction of $f$ to the copy of $G_{n-1}$ has both boundary values equal to zero, so by the inductive assumption this is a DN eigenfunction and its Neumann derivative makes no contribution to the Laplacian at $u$.  Thus the equation $Lf(u)=\lambda f(u)=0$ says the Neumann derivatives from $f$ restricted to the copies of $G_{n-2}$ must cancel, and since we know they are multiples of $\partial h_{m-2}(0)$ (by Lemma~\ref{lem:efnsGm+2} in the case $n=m+4$ and the inductive hypothesis if $n\geq m+6$) and this is non-zero by Lemma~\ref{lem:adjpointszeroimpliesvanishes}, we conclude that the multiples are equal magnitude and opposite in sign, closing the induction.
\end{proof}

\begin{theorem}\label{thm:structure of DN efns}
The Dirichlet eigenfunction on $G_n$ with eigenvalue $\lambda$ a root of $\gamma_m$ for some $m<n$ have the following structure:
\begin{enumerate}[(1)]
\item\label{item:allefnsDNifodd} If $n-m$ is odd then all Dirichlet eigenfunctions are also Neumann eigenfunctions. There are no eigenfunctions for $n=m+1$.
\item\label{item:antisymefns} If $n-m$ is even then there is a one-dimensional space of eigenfunctions that are Dirichlet but not Neumann. The eigenfunctions are $\Phi_n$-antisymmetric.  A basis element is given by decomposing the shortest path between the boundary points of $G_n$ into copies of $G_m$, placing copies of the Dirichlet eigenfunction $h$ with alternating signs along these copies of $G_m$ and setting $f\equiv0$ on decorations attached to the boundary points of the copies.  When $n=m+2$ this is the only eigenfunction.
\item\label{item:constructionofDN} Dirichlet-Neumann eigenfunctions $f$ on $G_n$ can be constructed in the following manner, and all DN eigenfunctions arise from this construction.
\begin{enumerate}[(i)]
\item Taking $f$ to coincide with DN eigenfunctions on each copy of $G_{n-2}$ and on the copy of $G_{n-1}$. 
\item If $n-m$ is odd and $n\geq m+3$, setting $f\equiv0$ on the copy of $G_{n-2}$ and taking the restriction of $f$ to the copy of $G_{n-1}$ to be an eigenfunction from the space in~\ref{item:antisymefns} above.
\end{enumerate}
\end{enumerate}
\end{theorem}

\begin{proof}
Statements~\eqref{item:allefnsDNifodd} and~\eqref{item:antisymefns} have already been established in the preceding results of this section, with the exception of the statement that the basis element in~\ref{item:antisymefns} is a sequence of copies of $h$  with alternating signs that vanishes on other decorations.  However, this latter is already seen in Lemma~\ref{lem:efnsGm+2} as shown on the left of Figure~\ref{fig:efnsGm+2} and follows inductively for larger $n$ using the fact that the Dirichlet but not Neumann eigenfunction on $G_n$ constructed in Lemma~\ref{lem:inductionforefns} consists of antisymmetrically arranged copies of the corresponding eigenfunction on $G_{n-2}$.

For statement~\eqref{lem:inductionforefns}, the fact that the constructions give DN eigenfunctions is elementary.  We need only check the equation $Lf(u)=\lambda f(u)$ at the gluing point $u$, and in both cases $f(u)=0$. In the first construction also all Neumann derivatives are zero, so $Lf(u)=0$. In the second construction we have $Lf(u)=0$ because the (non-zero) Neumann derivatives cancel due to the antisymmetry of the eigenfunction on the copy of $G_{n-1}$.

It is a little more challenging to check that these are the only DN eigenfunctions. Observe that we can assume $n\geq m+3$ because we found no DN eigenfunctions in the solutions of $Lf=\lambda f$ on $G_n$ for $n=m+1$ or $n=m+2$.  So we are in the situation described in Lemma~\ref{lem:inductionforefns}.  The restriction of a DN eigenfunction $f$ on $G_n$ to the copies of $G_{n-2}$ and the copy of $G_{n-1}$ inside $G_n$ satisfies $Lf=\lambda f$ on these copies so is as described in the previous results.

In the case that $n-m$ is even the only functions  in Lemma~\ref{lem:inductionforefns} that have DN conditions at one boundary point are DN eigenfunctions; $f$ must be one of those on each copy of $G_{n-2}$ or must vanish on $G_{n-2}$, and in either case its restriction to the copy of $G_{n-1}$ is also DN, so the function arises from the construction~\eqref{item:constructionofDN}(i).

If $n-m$ is odd we instead have that the restriction of $f$ to the copies of $G_{n-2}$ is one of the functions from Corollary~\ref{cor:allefnsonGmodd}. A priori, it could be that these are arranged so as to have DN boundary conditions and value $f(u)=1$, but in this case we would need the restriction of $f$ to $G_{n-1}$ to have value $1$ at both boundary points. Lemma~\ref{lem:inductionforefns} precludes this possibility for $n\geq m+5$, as then $n-1\geq m+4$ and $n-m$ is odd, so the only solutions of $Lf=\lambda f$ on $G_{n-1}$ have Dirichlet boundary conditions and  cannot match the condition $f(u)=1$.  In the remaining case $n=m+3$ there is a solution on $G_{n-1}=G_{n+2}$ with value $1$ at both boundary points: it is the function on the right in Figure~\ref{fig:efnsGm+2}.   However, at both boundary points this has Neumann derivative $\partial h_{m-2}(1)+2\kappa\partial h_{m-2}(0)=\partial h_{m-2}(1)-\partial h_{m-1}(0)$, where we used the formula from the proof of Lemma~\ref{lem:efnsGm+1}.  The Neumann derivative from each boundary point of the restriction of $f$ to a copy of $G_{n-2}$ is $\partial h_{m-1}(1)$. Summing these we have $Lf(u)=2\bigl(\partial h_{m-2}(1)-\partial h_{m-1}(0)+\partial h_{m-1}(1)\bigr)=\lambda-4\partial h_{m-1}(0)$ from Corollary~\ref{cor:sumofNeumderivs}, and therefore $Lf(u)\neq\lambda=\lambda f(u)$ because $\partial h_{m-1}(0)\neq0$ by Lemma~\ref{lem:adjpointszeroimpliesvanishes}.

Having established that for a DN eigenfunction we cannot have the restriction of $f$ to the copies of $G_{n-2}$ to be non-zero multiples of the functions in Corollary~\ref{cor:allefnsonGmodd} we conclude from $f\equiv0$ on these sets that they make no contribution to the Laplacian $Lf(u)$. It follows from this and $Lf(u)=\lambda f(u)=0$ that the Neumann derivatives of the restriction of $f$ to the copy of $G_{n-1}$ must cancel when its boundary points are identified.  In any case this function must be a Dirichlet eigenfunction on $G_{n-1}$. If it is DN then $f$ arises from the construction~\eqref{item:constructionofDN}(i).  If it is Dirichlet but not DN then by Lemma~\ref{lem:efnsGm+2} for the case $n=m+3$ (so $n-1=m+2$) or Lemma~\ref{lem:inductionforefns} the cancellation of the Neumann derivatives ensures it arises by the construction~\eqref{item:constructionofDN}(ii).
\end{proof}

\begin{corollary}
Dirichlet Neuman eigenfunctions on $G_n$ are periodic on loops. Those with eigenvalues that are roots of $\gamma_m$ have period two copies of $G_m$ and are supported on loops of copies of $G_m$.
\end{corollary}

\begin{theorem} \label{thm:factorizationofcn}
The powers in the factorization of $c_n$ may be given explicitly as
\begin{gather}
	c_n=\gamma_n\prod_{k=1}^{n-1} \gamma_k^{S_{n-k}}, \quad\text{where}  \label{eqn:factorcn}\\
	S_{n} = \dfrac{9 + 23(-1)^{n} + 2^{2+n} - 6n(-1)^{n}}{36}.
	\end{gather}
The roots of $\gamma_k$ are simple, so the multiplicity of an eigenvalue is determined precisely by $S_{n-k}$ where $G_k$ is the smallest of the graphs for which the eigenvalue occurred.
\end{theorem}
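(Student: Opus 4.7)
The plan is to combine Proposition~\ref{prop:reflectiongivesnewefn} and its converses (Proposition~\ref{lem:efnpropsforgamman}, Corollaries~\ref{cor:conversepropforgamman} and~\ref{cor:propforgamman}) to derive a linear recursion for $S_{n-k}$ and then solve it in closed form.

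Fix a root $\lambda$ of $\gamma_k$ and let $m_n(\lambda)$ denote its multiplicity in $c_n$ for $n\geq k$, i.e.\ the dimension of the Dirichlet $\lambda$-eigenspace on $G_n$. I would split this eigenspace as the direct sum of its Dirichlet--Neumann subspace (dimension $d_n$) and its Dirichlet but not Neumann subspace (dimension $e_n$), so $m_n=d_n+e_n$. For $k<n$, Corollary~\ref{cor:conversepropforgamman} determines $e_n$ immediately: every such eigenfunction is $\Phi_n$-antisymmetric, with exactly one (up to scalars) when $n-k$ is even and none when $n-k$ is odd, so $e_n=\bigl(1+(-1)^{n-k}\bigr)/2$. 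To count $d_n$, I would invoke Corollary~\ref{cor:propforgamman}: every Dirichlet--Neumann eigenfunction lies in the span of the three constructions of Proposition~\ref{prop:reflectiongivesnewefn}. These constructions produce linearly independent functions because type~(1) functions are supported on one of the two copies of $G_{n-2}$ while types~(2) and~(3) are supported on the copy of $G_{n-1}$, and every such function vanishes at the unique common point $u$. This yields
\begin{equation*}
d_n = 2d_{n-2}+d_{n-1}+r_{n-1},
\end{equation*}
where $r_{n-1}$ is the dimension of the $\Phi_{n-1}$-antisymmetric subspace of the Dirichlet-but-not-Neumann eigenspace on $G_{n-1}$. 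For $k<n-1$, Corollary~\ref{cor:conversepropforgamman} gives $r_{n-1}=e_{n-1}$, whereas in the boundary case $k=n-1$ the unique eigenfunction on $G_{n-1}$ is $\Phi_{n-1}$-symmetric by Proposition~\ref{lem:efnpropsforgamman}, forcing $r_{n-1}=0$ even though $e_{n-1}=1$.

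Substituting $d_j=m_j-e_j$ and using $e_{n-2}=e_n$ (both depend only on the parity of $n-k$) collapses the system to
\begin{equation*}
m_n = m_{n-1} + 2m_{n-2} - \frac{1+(-1)^{n-k}}{2}\qquad (n-k\geq 2),
\end{equation*}
with base cases $m_k=1$ (from simplicity in Proposition~\ref{lem:efnpropsforgamman}) and $m_{k+1}=0$ (directly: each of the three constructions contributes nothing, since $\lambda$ is not an eigenvalue on $G_{k-1}$ and the $G_k$-eigenfunction is symmetric and non-Neumann). Setting $j=n-k$ and $s_j=m_n$, I would solve
\begin{equation*}
s_j - s_{j-1} - 2s_{j-2} = -\tfrac{1}{2}\bigl(1+(-1)^j\bigr),\qquad s_0=1,\ s_1=0.
\end{equation*}
The characteristic polynomial $x^2-x-2=(x-2)(x+1)$ gives fundamental solutions $2^j$ and $(-1)^j$; because $(-1)^j$ is resonant with part of the forcing, a particular solution takes the form $A+Bj(-1)^j$, giving $A=1/4$ and $B=-1/6$. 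Imposing the initial conditions pins down the coefficients of $2^j$ and $(-1)^j$ at $1/9$ and $23/36$, producing $S_j=(9+23(-1)^j+2^{j+2}-6j(-1)^j)/36$. Collecting the exponent $S_{n-k}$ across the simple roots of the pairwise coprime polynomials $\gamma_k$ (using Proposition~\ref{lem:efnpropsforgamman} once more) then yields the claimed factorization of $c_n$.

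The main obstacle is the bookkeeping needed to evaluate $r_{n-1}$: one must carefully distinguish the generic case $k<n-1$, where Corollary~\ref{cor:conversepropforgamman} identifies the $\Phi_{n-1}$-antisymmetric Dirichlet-but-not-Neumann functions with all such functions on $G_{n-1}$, from the boundary case $k=n-1$, where the only such eigenfunction is $\Phi_{n-1}$-symmetric and construction~(3) of Proposition~\ref{prop:reflectiongivesnewefn} contributes nothing. Once this symmetry accounting is correct, everything else is either a direct extraction from the structural results of Section~\ref{subsec:locefns} or a routine constant-coefficient linear recurrence computation.
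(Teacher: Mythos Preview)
Your proposal is correct and follows essentially the same approach as the paper: both derive the recursion $s_{n,k}=s_{n-1,k}+2s_{n-2,k}-\tfrac12(1+(-1)^{n-k})$ with initial data $s_{k,k}=1$, $s_{k+1,k}=0$ from the structural results in Section~\ref{subsec:locefns}, then solve it. Your explicit splitting $m_n=d_n+e_n$ makes the bookkeeping slightly more transparent, but the underlying counting argument and the treatment of the boundary case $k=n-1$ are the same as in the paper.
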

\begin{proof}
From Theorem~\ref{thm:structure of DN efns} we know exactly how a root $\lambda$ of $\gamma_k$ occurs as a Dirichlet eigenvalue on $G_n$, and hence as a root of $c_n$. In particular, we can use this to obtain a recursion for the powers $s_{n,k}$ in~\eqref{eqn:recursionforcn} as follows.

Fix $\lambda$ a root of $\gamma_k$. According to  Theorem~\ref{thm:structure of DN efns} if $n-k$ is odd all eigenfunctions with eigenvalue $\lambda$ are Dirichlet-Neumann, so $s_{n,k}$ is the dimension of the DN eigenspace.  If $n-k$ is even there is one eigenfunction that is Dirichlet but not Neumann; considering the Neumann derivative clearly shows it is linearly independent of the DN eigenfunctions, so $s_{n,k}-1$ is the dimension of the DN eigenspace.

We also know exactly how DN eigenfunctions arise on $G_n$. If $n-k$ is even this is only by copying DN eigenfunctions from $G_{n-2}$ to the two copies of this graph in $G_n$ or copying DN eigenfunctions on $G_{n-1}$ to the single copy of this graph in $G_n$. The fact that these copies are disjoint aside from intersecting at the gluing point, where all functions concerned are zero, ensures the DN eigenfunctions thus constructed are linearly independent.  Hence for $n-k$ even the dimension of DN eigenfunctions on $G_n$ is twice that of DN eigenfunctions on $G_{n-2}$ plus that of DN eigenfunctions on $G_{n-1}$.  Writing this in terms of the indices $s_{m,n}$ gives $s_{n,k}-1 = s_{n-1,k}+2(s_{n-2,k}-1)$, or $s_{n,k}=s_{n-1,k}+2s_{n-2,k}+1$.

If $n-k$ is odd the same construction applies for DN eigenfunctions, but there is one additional eigenfunction from the construction~\eqref{item:constructionofDN}(ii) of Theorem~\ref{thm:structure of DN efns}.  It is linearly independent from those previously constructed because it was not DN on the copy of $G_{n-1}$ before the boundary points were identified.  Writing the dimension of the DN eigenspace as before gives $s_{n,k}=1+(s_{n-1,k}-1)+2s_{n-2,k}=s_{n-1,k}+2s_{n-2,k}$.

We also know that $s_{k+1,k}=0$ and $s_{k+2,k}=1$.  We rewrite the preceding as the following recursion
\begin{equation*}
s_{n,k} =
	\begin{cases}
	 s_{n-1,k} + 2s_{n-2,k} - 1  & \text{ if $k \leq n-3$ and $n-k$ is even,}\\
	 s_{n-1,k} + 2s_{n-2,k}  & \text{ if $k \leq n-3$ and $n-k$ is odd,}\\
	1 &\text{ if $k=n-2$,}\\
	 0 & \text{ if $k=n-1$,}
	\end{cases}
\end{equation*}
and view it as a recursion in $n$ beginning at $n = k+3$, with initial data $s_{k+1,k} = 0$ and $s_{k+2,k} =1$. Then $s_{n,k} = s_{n-k +1, 1}$ because they satisfy the same recursion with the same initial data. Hence, $s_{n,k} = S_{n-k}$ where for $n\geq2$, $S_n$ satisfies the recursion
\begin{equation}\label{eqn:recusionforS}
S_{n+1} = S_n + 2S_{n-1} - \frac{1}{2}(1-(-1)^n)
\end{equation}
with $S_1 =0$ and $S_2 = 1$.
The formula for $S_n$ given in the statement of the proposition satisfies this recursion because
\begin{align*}
	\lefteqn{36(S_{n-1}+2S_{n-2}-\frac{1}{2}(1-(-1)^{n-1})}\quad&\\
	& = 9+2\cdot9+23(-1)^{n-1}+2\cdot23(-1)^{n-2}+2^{n+1}+2\cdot2^{n} \\
	&\quad- 6(n-1)(-1)^{n-1}-2\cdot6(n-2)(-1)^{n-2} -18(1+(-1)^n)\\
	& =9 + 23(-1)^{n} + 2^{n+2} - 6n(-1)^{n}.  \qedhere
	\end{align*}
\end{proof}

\subsection{Dynamics for the $\gamma_n$ factors}\label{subsec-dyn-gamma-n}

The recusions we have for the $c_n$ imply recursions for the factors $\gamma_n$.
\begin{proposition}\label{prop:gammarecurse}
The polynomials $\gamma_n$, $n\geq3$ may be computed recursively from the initial polynomials $\gamma_1=c_1=\lambda-2$, $\gamma_2=c_2=\lambda^2-6\lambda+4$ and the relation
\begin{align*} 
	\bigl( \gamma_n - 2 \eta_n \bigr)
	 \mkern-15mu \prod_{0\leq 2j\leq n-4 }  \mkern-15mu \gamma_{n-2j-3} 
	&=
	\bigl( \gamma_{n-1} - 2 \eta_{n-1} \bigr)
	\bigl( \gamma_{n-1} + 2 \eta_{n-1}\bigr)  \mkern-15mu
	\prod_{0\leq 2j\leq n-5} \mkern-12mu \gamma_{n-2j-4},
	\end{align*}
in which
\begin{equation*}
	\eta_n=\gamma_{n-1}  \prod_{0\leq2j\leq n-4}  \gamma_{n-2j-3}^{2^j}.
	\end{equation*}
\end{proposition}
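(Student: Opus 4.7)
The plan is to derive the claim directly from recursion~\eqref{eqn:cdynamics1} of Proposition~\ref{prop:cdynamics} by substituting the factorization $c_n = \gamma_n \prod_{k=1}^{n-1} \gamma_k^{S_{n-k}}$ from Theorem~\ref{thm:factorizationofcn} and clearing a common polynomial factor. I would start by rearranging~\eqref{eqn:cdynamics1} into the form
\[
\frac{c_n}{c_{n-2}} - 2 c_{n-1} g_{n-2} \;=\; \Bigl(\frac{c_{n-1}}{c_{n-3}}\Bigr)^2 - 4 c_{n-2} g_{n-1},
\]
and make repeated use of the simplification $g_{n-1} = c_{n-2} g_{n-3}^2$, which follows at once from~\eqref{eqn:defnofgn}.

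The core technical step is to establish four polynomial factorizations, namely
\[
\frac{c_n}{c_{n-2}} = C_n A_n \gamma_n, \qquad c_{n-1} g_{n-2} = C_n A_n \eta_n,
\]
\[
\Bigl(\frac{c_{n-1}}{c_{n-3}}\Bigr)^2 = C_n A_{n-1} \gamma_{n-1}^2, \qquad c_{n-2} g_{n-1} = C_n A_{n-1} \eta_{n-1}^2,
\]
where $A_n = \prod_{0 \le 2j \le n-4}\gamma_{n-2j-3}$ is the product in the proposition and $C_n$ is a common auxiliary product of $\gamma_k$'s. These are verified by comparing the exponent of each $\gamma_k$ on both sides, using the recursion for $S_n$ and the explicit product form of $g_n$ in~\eqref{eqn:defnofgn}; coprimality of the $\gamma_k$'s (Theorem~\ref{thm:factorizationofcn}) makes exponent matching sufficient. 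Substituting these factorizations into the rearranged recursion and dividing through by $C_n$ gives
\[
A_n(\gamma_n - 2 \eta_n) = A_{n-1}(\gamma_{n-1}^2 - 4 \eta_{n-1}^2) = A_{n-1}(\gamma_{n-1} - 2\eta_{n-1})(\gamma_{n-1} + 2\eta_{n-1}),
\]
which is the proposition. For the base case $n = 3$ the empty-product convention yields $\eta_3 = \gamma_2$ and $\eta_2 = \gamma_1$, so the claim reduces to $\gamma_3 - 2\gamma_2 = \gamma_2^2 - 4\gamma_1^2$, a direct consequence of~\eqref{eqn:startpolys1} and~\eqref{eqn:startpolys3}.

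The main obstacle is the exponent bookkeeping in the factorization step, since the $S_{n-k}$ exponents arising from $c_n$ must combine compatibly with the $2^j$ exponents arising from $g_n$ and $\eta_n$, with the interaction depending on the parity of $n-k$ in a somewhat delicate way. The useful auxiliary quantity is $T_m := S_m - S_{m-2} = (2^m - 4(-1)^m)/12$, which satisfies the clean recursion $T_{m+2} = 4 T_m + (-1)^m$ and organizes the parity-dependent combinatorics into a manageable case analysis on the residue of $n-k$ modulo $2$.
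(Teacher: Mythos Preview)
Your proposal is correct and follows essentially the same route as the paper. Both start from~\eqref{eqn:cdynamics1}, use the identity $g_n/g_{n-2}=c_{n-1}g_{n-2}$ (equivalently $g_{n-1}=c_{n-2}g_{n-3}^2$) to rewrite it, substitute the factorization of $c_n$ from Theorem~\ref{thm:factorizationofcn}, and then do parity-dependent exponent bookkeeping using $S_{n-k}-S_{n-k-2}$ to extract the $\gamma$-identity. The only organizational difference is that the paper first factors the right side as a difference of squares and then computes the $\gamma_k$-exponents of each of the three resulting expressions (obtaining~\eqref{eqn:firsttermingammarecurse} and its two analogues) before cancelling a common product, whereas you isolate a single common factor $C_n$ from all four unfactored terms and cancel it before factoring; the underlying exponent computation is the same.
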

\begin{proof}
From~\eqref{eqn:startpolys3} we know $\gamma_3=\lambda^4-12\lambda^3+42\lambda^2-44\lambda+8$ and can check by hand that it satisfies the given relation.  For $n\geq4$ we use the recursion~\eqref{eqn:cdynamics1} for $c_n$ from Proposition~\ref{prop:cdynamics}, which we rewrite in the following two forms, with the latter obtained from the former using the definition~\eqref{eqn:defnofgn} of $g_n$:
\begin{align}
\frac{c_n}{c_{n-2}}  -  2  c_{n-1}g_{n-2} &=  \Bigl( \frac{c_{n-1}}{ c_{n-3}}\Bigr)^2    -4c_{n-2} g_{n-1}, \notag\\	
\frac{c_n}{c_{n-2}}  -   \frac{2g_n}{ g_{n-2}} &=  \Bigl( \frac{c_{n-1}}{ c_{n-3}}    - \frac{2g_{n-1}}{g_{n-3}} \Bigr)	 \Bigl( \frac{c_{n-1}}{ c_{n-3}}    + \frac{2g_{n-1}}{g_{n-3} }\Bigr). \label{eqn:cgrecursion}
\end{align}

It is then useful to compare the powers of $\gamma_k$ that occur in each of the component expressions.  For $c_n/c_{n-2}$ the power of $\gamma_n$ is $1$ and the power of $\gamma_k$ for $1\leq k\leq n-2$ is
\begin{equation}\label{eqn:Sndiffs}
	S_{n-k}-S_{n-k-2}
	= \frac13 \bigl( 2^{n-k-2} +(-1)^{n-k+1} \bigr)
	\end{equation}
where the explicit expression is from Theorem~\ref{thm:factorizationofcn}.

From the formula~\eqref{eqn:defnofgn} for $g_n$ we have
\begin{equation*}
	\frac{g_n}{g_{n-2}}
	= \begin{cases}
	 c_2^{2^{(n-3)/2}} \prod_{0\leq 2j< n-3} \Bigl( \frac{c_{n-1-2j}}{c_{n-3-2j}} \Bigr)^{2^j} &\text{ if $n$ is odd,}\\
	 c_1^{2^{(n-2)/2}} \prod_{0\leq 2j< n-3} \Bigl( \frac{c_{n-1-2j}}{c_{n-3-2j}} \Bigr)^{2^j} &\text{ if $n$ is even.}
	\end{cases}
	\end{equation*}
The difference between odd and even $n$ only affects the powers of $c_1=\gamma_1$ and $c_2=\gamma_2$, requiring that we add $2^{(n-3)/2}$ to the formula for $k=2$ if $n$ is odd and $2^{(n-2)/2}$ to the formula for $k=1$ if $n$ is even.  Conveniently, these both modify the case when $n-k$ is odd, which is also different to that for even values of $n-k$ in the cases $k\geq3$ because in the former case the occurence of $(c_k/c_{k-2})^{2^{(n-k-1)/2}}$ in the product introduces an additional factor of $\gamma_k^{2^{(n-k-1)/2}}$ that is not present when $n-k$ is even.   Note that the amount added in the $k=1,2$ cases is consistent with this formula. Accordingly, the power of $\gamma_k$ in $g_n/g_{n-2}$ for $1\leq k\leq n-3$ is
\begin{gather*}
	2^{(n-k-1)/2}+ \sum_{0\leq 2j\leq n-k-3} 2^j \bigl(S_{n-k-1-2j}- S_{n-k-3-2j}\bigr) \text{ if $n-k$ is odd,}\\
	\sum_{0\leq 2j\leq n-k-3} 2^j \bigl(S_{n-k-1-2j}- S_{n-k-3-2j}\bigr) \text{ if $n-k$ is even.}
\end{gather*}
We also note that the power of $\gamma_{n-1}$ is $1$ and no other $\gamma_j$ with $j>n-3$ occurs.
Simplifying the series using~\eqref{eqn:Sndiffs} gives
\begin{align*}
	\lefteqn{\sum_{0\leq 2j\leq n-k-3} 2^j \bigl(S_{n-k-1-2j}- S_{n-k-3-2j}\bigr)} \quad&\\
	&= \frac13 \sum_{0\leq 2j\leq n-k-3} 2^j \bigl(  2^{(n-k-3-2j)} + (-1)^{n-k-2j} \bigr)\\
	&=\begin{cases}
	\frac13 \sum_0^{(n-k-3)/2} \bigl(  2^{(n-k-3-j)} - 2^j \bigr) &\text{ if $n-k$ is odd}\\
	\frac13 \sum_0^{(n-k-4)/2} \bigl(  2^{(n-k-3-j)} + 2^j \bigr) &\text{ if $n-k$ is even}
	\end{cases}\\
	&=\begin{cases}
	\frac13 \bigl(  2^{(n-k-2)}- 2^{(n-k-3)/2} -  (2^{(n-k-1)/2}-1) \bigr) &\text{ if $n-k$ is odd}\\
	\frac13  \bigl(  2^{(n-k-2)} - 2^{(n-k-2)/2} + ( 2^{(n-k-2)/2}-1)\bigr) &\text{ if $n-k$ is even}
	\end{cases}\\
	&=\begin{cases}
	 	\frac13 \bigl(  2^{(n-k-2)} + 1 \bigr) - 2^{(n-k-3)/2} &\text{ if $n-k$ is odd}\\
	      \frac13  \bigl(  2^{(n-k-2)} - 1\bigr) &\text{ if $n-k$ is even}
	      \end{cases}
\end{align*}
and adding back in the $2^{(n-k-1)/2}$ in the odd case finally leads to the following expression for powers of $\gamma_k$ in $g_n/g_{n-2}$ if $1\leq k\leq n-3$:
\begin{gather*}
	 \frac13 \bigl(  2^{(n-k-2)} + 1 \bigr) + 2^{(n-k-3)/2} \text{ if $n-k$ is odd,} \\
	  \frac13  \bigl(  2^{(n-k-2)} - 1\bigr) \text{ if $n-k$ is even.} 
	\end{gather*}
Comparing this to~\eqref{eqn:Sndiffs} for powers of $\gamma_k$ for $c_n/c_{n-2}$ we obtain an expression for the left side of the recursion in~\eqref{eqn:cgrecursion}.
\begin{equation} \label{eqn:firsttermingammarecurse}
	\frac{c_n}{c_{n-2}} - \frac{2g_n}{g_{n-2}}
	= \Bigl( \gamma_n - 2 \gamma_{n-1} \prod_{0\leq2j\leq n-4}  \gamma_{n-2j-3}^{2^j} \Bigr)
		\prod_{j=1}^{n-3} \gamma_{n-j-2}^{(2^j - (-1)^j)/3}
	\end{equation}

The right side of the recursion  in~\eqref{eqn:cgrecursion} is the product of two terms like that on the left.  Reasoning as for that term we find them to be
\begin{align*}
	\Bigl( \frac{c_{n-1}}{ c_{n-3}}    - \frac{2g_{n-1}}{g_{n-3}} \Bigr)
	&=\Bigl( \gamma_{n-1} - 2 \gamma_{n-2} \prod_{0\leq2j\leq n-5}  \gamma_{n-2j-4}^{2^j} \Bigr)
	\prod_{j=1}^{n-4} \gamma_{n-j-3}^{(2^j - (-1)^j)/3} \\
	 \Bigl( \frac{c_{n-1}}{ c_{n-3}}    + \frac{2g_{n-1}}{g_{n-3} }\Bigr)
	&=\Bigl( \gamma_{n-1} + 2 \gamma_{n-2} \prod_{0\leq2j\leq n-5}  \gamma_{n-2j-4}^{2^j} \Bigr)
	\prod_{j=1}^{n-4} \gamma_{n-j-3}^{(2^j - (-1)^j)/3} 
	\end{align*}
The product has a factor
\begin{equation*}
	\prod_{j=1}^{n-4} \gamma_{n-j-3}^{2(2^j - (-1)^j)/3} 
	= \prod_{j=1}^{n-4} \gamma_{n-j-3}^{(2^{j+1} -2 (-1)^j)/3} 
	= \prod_{j=2}^{n-3} \gamma_{n-j-2}^{(2^j +2 (-1)^j)/3}
	\end{equation*}
so when we substitute these and~\eqref{eqn:firsttermingammarecurse} into~\eqref{eqn:cgrecursion} we may cancel most terms, leaving $\prod_{j=1}^{n-3} \gamma_{n-j-2}^{(-1)^j}$ on the right side.  To obtain our desired conclusion simply move the terms in this product with odd $j$ onto the left and kept those with even $j$ on the right.
\end{proof}

\begin{corollary}\label{cor:gammarecurse}
For $n\geq4$,
\begin{equation*}
	(\gamma_n-2\eta_n)\gamma_{n-3} 
	=(\gamma_{n-1}+2\eta_{n-1})(\gamma_{n-2}+2\eta_{n-2})(\gamma_{n-2}-2\eta_{n-2}).
	\end{equation*}
	\end{corollary}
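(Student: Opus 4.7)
The plan is to derive Corollary~\ref{cor:gammarecurse} as a direct consequence of Proposition~\ref{prop:gammarecurse} by applying that recursion twice: once at level $n$ and once at level $n-1$, and then cancelling the resulting product of $\gamma_k$ factors on both sides. The hypothesis $n\geq 4$ ensures $n-1\geq 3$ so that the level $n-1$ instance of Proposition~\ref{prop:gammarecurse} is valid.

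First I write down the recursion at level $n$, which expresses
\begin{equation*}
(\gamma_n - 2\eta_n)\prod_{0\leq 2j\leq n-4}\gamma_{n-2j-3}
= (\gamma_{n-1}-2\eta_{n-1})(\gamma_{n-1}+2\eta_{n-1})\prod_{0\leq 2j\leq n-5}\gamma_{n-2j-4}.
\end{equation*}
Next I invoke the recursion at level $n-1$, which rewrites the factor $(\gamma_{n-1}-2\eta_{n-1})$ together with the product $\prod_{0\leq 2j\leq n-5}\gamma_{n-2j-4}$ on the right:
\begin{equation*}
(\gamma_{n-1}-2\eta_{n-1})\prod_{0\leq 2j\leq n-5}\gamma_{n-2j-4}
= (\gamma_{n-2}-2\eta_{n-2})(\gamma_{n-2}+2\eta_{n-2})\prod_{0\leq 2j\leq n-6}\gamma_{n-2j-5}.
\end{equation*}
Substituting this into the right-hand side of the level-$n$ identity yields
\begin{equation*}
(\gamma_n - 2\eta_n)\prod_{0\leq 2j\leq n-4}\gamma_{n-2j-3}
= (\gamma_{n-1}+2\eta_{n-1})(\gamma_{n-2}-2\eta_{n-2})(\gamma_{n-2}+2\eta_{n-2})\prod_{0\leq 2j\leq n-6}\gamma_{n-2j-5}.
\end{equation*}

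The final step is the reindexing observation that pulls the single factor $\gamma_{n-3}$ out of the left-hand product: replacing $j$ by $j+1$ in $\prod_{0\leq 2j\leq n-4}\gamma_{n-2j-3}$ identifies all terms with $j\geq 1$ with the terms of $\prod_{0\leq 2j\leq n-6}\gamma_{n-2j-5}$, while the $j=0$ term contributes the extra factor $\gamma_{n-3}$. Dividing through by the common product $\prod_{0\leq 2j\leq n-6}\gamma_{n-2j-5}$ (which is a nonzero polynomial so division is valid in the polynomial ring) yields exactly
\begin{equation*}
(\gamma_n-2\eta_n)\gamma_{n-3} = (\gamma_{n-1}+2\eta_{n-1})(\gamma_{n-2}+2\eta_{n-2})(\gamma_{n-2}-2\eta_{n-2}),
\end{equation*}
as required.

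There is no real obstacle here; the only subtlety is keeping the index sets $\{j: 0\leq 2j\leq n-4\}$, $\{j: 0\leq 2j\leq n-5\}$, and $\{j: 0\leq 2j\leq n-6\}$ straight through the substitution and the parity cases, and verifying the boundary case $n=4$ (where the products on the right are empty and the derivation collapses to a direct substitution). Once the indexing is correct the cancellation is immediate.
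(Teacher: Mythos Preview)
Your proof is correct and follows exactly the paper's own approach: the authors simply write ``Apply the relation in Proposition~\ref{prop:gammarecurse} twice,'' and you have carried out precisely that substitution with the index bookkeeping made explicit. The reindexing $j\mapsto j+1$ that extracts the single factor $\gamma_{n-3}$ from the left-hand product is the key cancellation step, and you have it right.
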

\begin{proof}
Apply the relation in Proposition~\ref{prop:gammarecurse} twice.
\end{proof}
Implementing this recursion in Mathematica and applying a numerical root-finder we can get a sense of how the roots of the $\gamma_n$ are distributed depending on $n$, see Figure~\ref{fig:rootsofgamman}. Some structural features of this distribution will be discussed in Section~\ref{section:gaps}.

\begin{corollary}\label{cor:rootdynamics}
For $n\geq4$ the rational function $\zeta_n=\gamma_n/\eta_n$ has roots precisely at the roots of $\gamma_n$ and satisfies the recursion
\begin{equation*}
\zeta_n-2 = \Bigl( 1+\frac2{\zeta_{n-1}}\Bigr) (\zeta_{n-2}^2-4),
\end{equation*}
where the equality is valid at the poles in the usual sense of rational functions, and the initial data is
\begin{align}\label{eq:zetainitdata}
	\zeta_2=\frac{\lambda^2-6\lambda+4}{\lambda-2}, & & \zeta_3=\frac{\lambda^4-12\lambda^3+42\lambda^2-44\lambda+8}{\lambda^2-6\lambda+4}.
	\end{align}
\end{corollary}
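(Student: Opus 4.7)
The plan is to reduce Corollary~\ref{cor:rootdynamics} to Corollary~\ref{cor:gammarecurse} by dividing through by a suitably chosen product and identifying the quotients with $\zeta_n$, $\zeta_{n-1}$, $\zeta_{n-2}$. The only nontrivial preparation is establishing a self-similar identity for the auxiliary products $\eta_n$ themselves.

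First I would prove the auxiliary identity
\begin{equation*}
\eta_n\,\gamma_{n-3} \;=\; \gamma_{n-1}\,\eta_{n-2}^{\,2}
\end{equation*}
directly from the defining product
\[
	\eta_n=\gamma_{n-1}\prod_{0\leq 2j\leq n-4}\gamma_{n-2j-3}^{\,2^j}.
\]
Indeed, shifting the index $j\mapsto j+1$ in the product doubles the exponents and shifts the subscripts by $-2$, so the product over $j\geq 1$ in $\eta_n$ equals $\eta_{n-2}^{\,2}/\gamma_{n-3}$ after stripping the leading $\gamma_{n-3}^{\,2^0}=\gamma_{n-3}$ factor. The $\gamma_{n-1}$ prefactor then gives the claimed identity; one checks the boundary cases $n=4,5$ by hand (noting $\eta_2=\gamma_1$, $\eta_3=\gamma_2$ by the empty product convention).

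With this in hand I would divide the relation of Corollary~\ref{cor:gammarecurse},
\[
(\gamma_n-2\eta_n)\gamma_{n-3}=(\gamma_{n-1}+2\eta_{n-1})(\gamma_{n-2}-2\eta_{n-2})(\gamma_{n-2}+2\eta_{n-2}),
\]
through by $\eta_n\gamma_{n-3}=\gamma_{n-1}\eta_{n-2}^2$. The left side becomes $\zeta_n-2$. On the right, the first factor divided by $\gamma_{n-1}$ is $1+2\eta_{n-1}/\gamma_{n-1}=1+2/\zeta_{n-1}$, and the remaining factor divided by $\eta_{n-2}^{\,2}$ is $(\gamma_{n-2}^{\,2}-4\eta_{n-2}^{\,2})/\eta_{n-2}^{\,2}=\zeta_{n-2}^{\,2}-4$, yielding the stated recursion. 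The identity is an equality of rational functions, so it holds at the poles in the usual sense.

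For the claim that $\zeta_n$ vanishes precisely at the roots of $\gamma_n$, I would invoke the definition of the $\gamma_k$ as mutually coprime factors of the $c_n$: by construction $\gamma_n$ shares no roots with any $\gamma_k$, $k<n$, and $\eta_n$ is a product of such $\gamma_k$'s, so numerator and denominator of $\zeta_n=\gamma_n/\eta_n$ have no common zeros. Finally, the initial data \eqref{eq:zetainitdata} follow immediately upon substituting $\eta_2=\gamma_1=\lambda-2$ and $\eta_3=\gamma_2=\lambda^2-6\lambda+4$ into $\zeta_n=\gamma_n/\eta_n$, with $\gamma_2$ and $\gamma_3$ read off from \eqref{eqn:startpolys2} and \eqref{eqn:startpolys3}.

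The only real obstacle is the combinatorial identity $\eta_n\gamma_{n-3}=\gamma_{n-1}\eta_{n-2}^{\,2}$; once this self-similar scaling relation for $\eta_n$ is in place, the corollary is a one-line algebraic manipulation of Corollary~\ref{cor:gammarecurse}. I would expect to cross-check the bookkeeping by comparing the exponent of each $\gamma_k$ on both sides, which reduces to the elementary observation $2^{j+1}=2\cdot 2^j$ with a single shift of index range at the two ends of the product.
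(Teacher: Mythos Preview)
Your proposal is correct and follows essentially the same route as the paper: both establish the key identity $\eta_n\gamma_{n-3}=\gamma_{n-1}\eta_{n-2}^{2}$ from the definition of $\eta_n$, then divide the relation of Corollary~\ref{cor:gammarecurse} through by this common quantity to obtain the $\zeta$-recursion, with the coprimality of the $\gamma_k$ giving the statement about roots. You supply more detail on the index-shift verification of the $\eta$-identity than the paper does (it simply says ``observe from the definition''), but the argument is the same.
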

\begin{proof}
Since $\eta_n$ is a product of powers of $\gamma_j$ where $j<n$ and these (by definition) have no roots in common with $\gamma_n$, the roots of $\zeta_n$ are precisely those of $\gamma_n$.  In order to see the recursion, observe from the definition (in Proposition~\ref{prop:gammarecurse}) that $\gamma_{n-3}\eta_n = \gamma_{n-1}\eta_{n-2}^2$, then write the recursion in Corollary~\ref{cor:gammarecurse} as
\begin{equation*}
(\zeta_n-2)\eta_n\gamma_{n-3} 
	=\Bigl(1+\frac2{\zeta_{n-1}}\Bigr)(\zeta_{n-2}+2)(\zeta_{n-2}-2)\gamma_{n-1}\eta_{n-2}^2.
\end{equation*}
This expression involves polynomials.  Cancellation of the the common factors leaves a recursion of rational functions of the desired type.
\end{proof}

\begin{figure}
\includegraphics{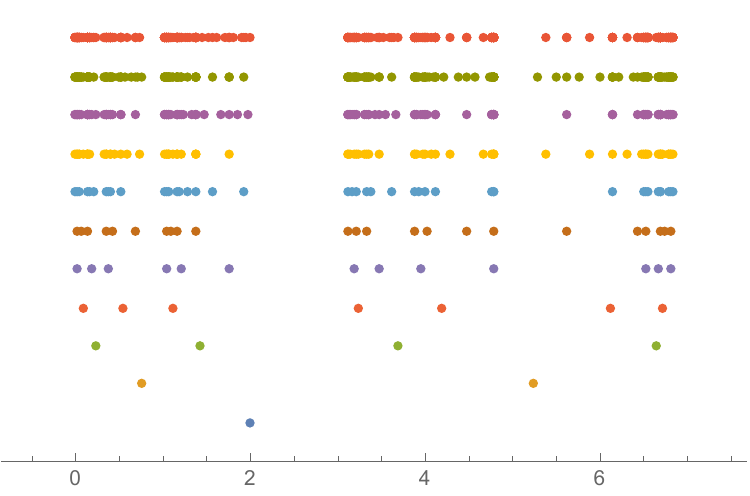}
\caption{Roots of $\gamma_n$ for $n=1,\dotsc,11$ ($n$ increases on the vertical axis).}\label{fig:rootsofgamman}
\end{figure}

\begin{proposition}\label{prop:degofgammanadeta}
The degree of $\gamma_n$ is
\begin{align*}
	\degree(\gamma_n)
	&=  \frac2{\sqrt{7}}\biggl( \rho_1^n \cos\bigl(\phi+\frac{2\pi}3\bigr) + \rho_2^n \cos\bigl(\phi + \frac{4\pi}3 \bigr)
	+\rho_3^n \cos\phi  \biggr)
	\end{align*}
where $\phi=\frac13\arctan(-3\sqrt{3})$ and
\begin{align*}
	 \rho_1=\frac13\Bigl(1 - 2 \sqrt{7}\cos \phi \Bigr),&& \rho_2=\frac13\Bigl(1 - 2 \sqrt 7 \cos\bigl(\phi + \frac{2 \pi}3\bigr) \Bigr), && \rho_3=\frac13\Bigl(1 + 2 \sqrt7 \cos\bigl(\phi + \frac\pi3\bigr) \Bigr).
	\end{align*}
Moreover the degrees of $\gamma_n$ and $\eta_n$ are related by
\begin{equation}\label{eqn:degetafromgamma}
	\degree(\eta_n) = \degree(\gamma_n) -2^{\lfloor\frac n2\rfloor}
	\end{equation}
where $\lfloor\frac n2\rfloor$ is the greatest integer less than $\frac n2$.
\end{proposition}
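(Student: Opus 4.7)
The plan is to first derive a third-order linear recursion for $d_n := \degree(\gamma_n)$ together with~\eqref{eqn:degetafromgamma} by a joint induction, and then solve the recursion explicitly via Vi\`ete's trigonometric formula. Throughout write $e_n := \degree(\eta_n)$.

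First I will establish the polynomial identity $\gamma_{n-3}\eta_n = \gamma_{n-1}\eta_{n-2}^2$ for $n\geq 4$. This follows by pulling the $j=0$ factor $\gamma_{n-3}$ out of the product in Proposition~\ref{prop:gammarecurse} and reindexing $j \mapsto j+1$ in what remains to recognise $\eta_{n-2}^2/\gamma_{n-3}^2$. Taking degrees gives $e_n = d_{n-1} + 2e_{n-2} - d_{n-3}$. Next, under the inductive hypothesis that $d_k > e_k$ for $k \leq n-1$, Corollary~\ref{cor:gammarecurse} has right-hand side of degree exactly $d_{n-1} + 2d_{n-2}$, which by the identity above exceeds $e_n + d_{n-3}$ by $2(d_{n-2} - e_{n-2}) > 0$; hence the left-hand side $(\gamma_n - 2\eta_n)\gamma_{n-3}$ must attain this degree through $\gamma_n - 2\eta_n$, forcing both $d_n > e_n$ and
\[ d_n = d_{n-1} + 2d_{n-2} - d_{n-3}, \qquad n \geq 4. \]
Subtracting the recursion for $e_n$ yields $d_n - e_n = 2(d_{n-2} - e_{n-2})$, so starting from $d_3 - e_3 = d_4 - e_4 = 2$ (readily computed from \eqref{eqn:startpolys1}--\eqref{eqn:startpolys3}) an induction gives~\eqref{eqn:degetafromgamma} and simultaneously confirms the hypothesis $d_k > e_k$.

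For the closed form, the characteristic polynomial of the recursion is $p(x) = x^3 - x^2 - 2x + 1$. Substituting $x = y + \tfrac{1}{3}$ yields the depressed cubic $y^3 - \tfrac{7}{3}y + \tfrac{7}{27} = 0$, whose discriminant $-4(-7/3)^3 - 27(7/27)^2 = 49$ is positive, so $p$ has three distinct real roots. Vi\`ete's trigonometric formula then gives $\rho = \tfrac{1}{3}(1 - 2\sqrt{7}\cos(\phi + 2\pi k/3))$ for $k = 0, 1, 2$, where $\cos 3\phi = 1/(2\sqrt{7})$; this condition is equivalent to $\tan 3\phi = -3\sqrt{3}$ on the principal branch, matching the statement, and $\cos(\phi + 4\pi/3) = -\cos(\phi + \pi/3)$ absorbs the sign discrepancy in the formula given for $\rho_3$.

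Finally, the general solution $d_n = A\rho_1^n + B\rho_2^n + C\rho_3^n$ is determined by $d_0 = 0$ (from back-extending the recursion) together with $d_1 = 1$ and $d_2 = 2$. Writing $\beta_k = \cos(\phi + 2\pi k/3)$, the $\beta_k$ are the three roots of $4y^3 - 3y - \cos 3\phi = 0$, with $\sigma_1 = 0$, $\sigma_2 = -3/4$, $\sigma_3 = 1/(8\sqrt{7})$, and the discriminant of this cubic is $(27/(8\sqrt{7}))^2$. Direct algebra then verifies that $A = \tfrac{2}{\sqrt{7}}\beta_1$, $B = \tfrac{2}{\sqrt{7}}\beta_2$, $C = \tfrac{2}{\sqrt{7}}\beta_0$ satisfies all three initial conditions. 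The main obstacle is this last verification: because each $\rho_k$ is paired in the formula with the $\beta$ obtained by a cyclic shift of its own defining $\beta$, the relevant power sums are the non-symmetric cyclic sums $\sum_k \beta_{k+1}\beta_k^m$ for $m = 0, 1, 2$, and the asymmetry between these and their reverses is precisely what produces the factor $2/\sqrt{7}$ via the square root of the discriminant of the auxiliary cubic.
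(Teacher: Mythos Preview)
Your argument is correct and follows essentially the same route as the paper: both obtain the identity $\gamma_{n-3}\eta_n=\gamma_{n-1}\eta_{n-2}^2$, use it together with Corollary~\ref{cor:gammarecurse} in a joint induction to prove simultaneously that $d_n>e_n$, that $d_n$ satisfies the linear recursion with characteristic polynomial $\rho^3-\rho^2-2\rho+1$, and that $d_n-e_n=2(d_{n-2}-e_{n-2})$, from which~\eqref{eqn:degetafromgamma} follows. Where the paper simply says the solution of the recursion is ``standard'', you spell out Vi\`ete's trigonometric solution and indicate how the cyclic-sum computation pins down the coefficients via the discriminant of the auxiliary cubic; this extra detail is sound (one checks $d_0=0$ from $\sigma_1=0$, $d_1=1$ from $\sigma_2=-3/4$, and $d_2=2$ reduces to the asymmetric cyclic sum $\sum_k\beta_{k+1}\beta_k^2=3\sqrt7/14$, which follows from $T+T'=-3\sigma_3$ and $T-T'=\prod_{i<j}(\beta_i-\beta_j)=27/(8\sqrt7)$). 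One small wording issue: the natural seed values for the $d_n-e_n$ recursion are $d_2-e_2=1$ and $d_3-e_3=2$, not $d_3-e_3$ and $d_4-e_4$; the value $d_4-e_4=2$ is already a consequence of the recursion and cannot be read directly from~\eqref{eqn:startpolys1}--\eqref{eqn:startpolys3} without first knowing $d_4$.
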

\begin{proof}
Observe that $\eta_1=\gamma_0$ has degree $1$ and $\eta_2=\gamma_1$ has degree $2$, while $\gamma_2$ has degree $4$. This shows  that~\eqref{eqn:degetafromgamma} holds
for $n=1,2$, and we suppose inductively that this holds for all $k\leq n-1$.  Examining the recursion in Corollary~\eqref{cor:gammarecurse} we see from the inductive hypotheses  that each bracketed term on the right has the same degree as its included $\gamma$ term, and therefore that
\begin{equation}\label{eqn:gammaandetadegreerecursion}
	\degree(\gamma_n-2\eta_n) = \degree(\gamma_{n-1}) + 2\degree(\gamma_{n-2}) - \degree(\gamma_{n-3}).
	\end{equation}
However $\gamma_{n-3}\eta_n = \gamma_{n-1}\eta_{n-2}^2$ and thus there is a similar recursion
\begin{align}
	\degree(\eta_n)
	&= \degree(\gamma_{n-1}) + 2\degree(\eta_{n-2}) -\degree(\gamma_{n-3}) \notag\\
	&= \degree(\gamma_{n-1}) + 2\degree(\gamma_{n-2})-2^{\lfloor( n-2)/2\rfloor+1} -\degree(\gamma_{n-3}),\label{eqn:etarecusion}
	\end{align}
where we have substituted the inductive hypothesis~\eqref{eqn:degetafromgamma} to obtain the second expression.
Comparing this to~\eqref{eqn:gammaandetadegreerecursion} proves that $\degree(\eta_n)<\degree(\gamma_n)$ and thereby reduces~\eqref{eqn:gammaandetadegreerecursion} to
\begin{equation}\label{eqn:gammadegreerecursion}
	\degree(\gamma_n) = \degree(\gamma_{n-1}) + 2\degree(\gamma_{n-2}) - \degree(\gamma_{n-3}).
	\end{equation}
Comparing this to~\eqref{eqn:etarecusion} proves that~\eqref{eqn:degetafromgamma} holds for $k=n$ and therefore for all $n$  by induction.

The recursion in~\eqref{eqn:gammadegreerecursion} can be solved by writing it as a matrix equation and computing an appropriate matrix power.  The matrix involved has characteristic polynomial $\rho^3-\rho^2-2\rho+1$, the roots $\rho_j$, $j=1,2,3$ of which are as given in the statement of the lemma.  The rest of the proof is standard.
\end{proof}

\section{KNS Spectral Measure}\label{sec-KMS}
For a sequence of graphs convergent in the metric~\ref{eq:pGHdist} the Kesten--von-Neumann--Serre (KNS) spectral measure, defined in~\cite{GZ04}, is the weak limit of the (Neumann) spectral measures for the graphs in the sequence.  In particular, for a blowup $G_\infty$ it is  the limit of the normalized sum of Dirac masses $\delta_{\lambda_j}$ at eigenvalues of the Laplacian $L_n$ on $G_n$, repeated according to their multiplicity.   Since the measure does not depend on which blowup $G_\infty$ we consider, we will henceforth just refer to the KNS spectral measure.  Note that by Theorem~\ref{thm:orbitalSareblowups} this is also the KNS spectral measure of the orbital Schreier graphs of the Basilica that do not have four ends.

Our first observation regarding the KNS spectral measure is that we can study it using the limit of the spectral measure for the Dirichlet Laplacian on $G_n$, or even the limit of the measure on Dirichlet-Neumann eigenfunctions on $G_n$.
\begin{lemma}\label{lem:computeKNSfromDirichletNeumann}
The KNS spectral measure is the weak limit of the spectral measure for the Dirichlet Laplacian on $G_n$, which is given by
\begin{equation}\label{eqn:spectralmsr}
	\chi_n = \frac1{V_n-2}   \sum_{\{\lambda_j:c_n(\lambda_j)=0\}} \delta_{\lambda_j} \\
	=  \sum_{k=1}^n \sum_{\{\lambda_j:\gamma_k(\lambda_j)=0\}} \frac{S_{n-k}}{V_n-2}\delta_{\lambda_j}.
	\end{equation}
Moreover, the support of the KNS spectral measure is contained in the closure of the union over $n$ of the set of Dirichlet-Neumann eigenvalues for the  Laplacian on $G_n$.
\end{lemma}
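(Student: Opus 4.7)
The plan is to handle the lemma in three steps: first compare the normalized Neumann and Dirichlet spectral measures of $L_n$ via Cauchy interlacing, second unpack the factorization of $c_n$ from Theorem~\ref{thm:factorizationofcn} to extract the explicit formula for $\chi_n$, and finally verify that every root of every $\gamma_k$ is a Dirichlet--Neumann eigenvalue of some $L_m$ with $m \geq k$.

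For the first step, I observe that the Dirichlet Laplacian $L_n^{\partial G_n}$ is the compression of $L_n$ to the codimension-two subspace of functions vanishing on $\partial G_n$. Cauchy interlacing applied to this rank-two compression gives $|\#\{\text{Neumann eigenvalues in }I\} - \#\{\text{Dirichlet eigenvalues in }I\}| \leq 2$ for every interval $I$, so the normalized counting measures $V_n^{-1}\sum \delta_{\lambda_j^N}$ and $(V_n-2)^{-1}\sum \delta_{\lambda_j^D}$ differ by total mass $O(1/V_n)$; since $V_n \to \infty$ by Lemma~\ref{csgetdegrees}, the two sequences share the same weak limit, which is the KNS measure. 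For the second step, the explicit formula follows directly by inserting the factorization $c_n = \gamma_n \prod_{k=1}^{n-1} \gamma_k^{S_{n-k}}$ from Theorem~\ref{thm:factorizationofcn}: distinct $\gamma_k$ are coprime by definition and each has simple roots by Proposition~\ref{lem:efnpropsforgamman}, so the Dirichlet spectrum of $L_n$ counted with multiplicity consists of each root of $\gamma_k$ taken $S_{n-k}$ times (with $S_0 = 1$, which is immediate from the closed form for $S_n$).

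For the support statement, any $\lambda$ in the support of the weak limit is a limit of roots of $c_n$ along $n \to \infty$, and hence lies in the closure of $\bigcup_k \{\lambda : \gamma_k(\lambda) = 0\}$; it therefore suffices to show each root of each $\gamma_k$ is a Dirichlet--Neumann eigenvalue of some $L_m$. Given such a root $\lambda$, Proposition~\ref{lem:efnpropsforgamman} supplies a Dirichlet (not Neumann) eigenfunction on $G_k$. Construction~(4) of Proposition~\ref{prop:reflectiongivesnewefn} produces from it a $\Phi_{k+2}$-antisymmetric Dirichlet (not Neumann) eigenfunction on $G_{k+2}$, and then construction~(3) of the same proposition places this eigenfunction on the $G_{k+2}$ subgraph of $G_{k+3}$ and extends by zero on the two $G_{k+1}$ pieces. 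The boundary of $G_{k+3}$ lies in those $G_{k+1}$ pieces on which the function vanishes identically, so both Dirichlet and Neumann conditions at $\partial G_{k+3}$ are trivially satisfied, and $\lambda$ is a Dirichlet--Neumann eigenvalue of $L_{k+3}$.

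The delicate point is this final construction: since Proposition~\ref{lem:efnpropsforgamman} guarantees that roots of $\gamma_k$ are not Neumann on $G_k$, we cannot invoke construction~(1) of Proposition~\ref{prop:reflectiongivesnewefn} directly to lift them to Dirichlet--Neumann functions; we must traverse two levels, first antisymmetrizing to obtain a Dirichlet-not-Neumann eigenfunction on $G_{k+2}$ and then embedding into the central subgraph of $G_{k+3}$, where the boundary has conveniently receded into the pieces on which the eigenfunction vanishes. The remaining ingredients--Cauchy interlacing, the factorization of $c_n$, and simplicity of the roots of $\gamma_k$--are all in hand from earlier in the paper.
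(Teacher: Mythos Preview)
Your proof is correct and takes a genuinely different route from the paper's on two of the three steps.

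For the equality of the Neumann and Dirichlet limiting measures, the paper does \emph{not} use interlacing. Instead it invokes Corollary~\ref{cor:conversepropforgamman} to bound the number of eigenfunctions on $G_n$ that are Dirichlet but not Neumann by $\deg(\gamma_n)+\tfrac n2$, and then uses the degree estimate of Proposition~\ref{prop:degofgammanadeta} (namely $\deg(\gamma_n)=O(\rho^n)$ with $\rho<2$) to conclude that the proportion of eigenvalues that fail to be Dirichlet--Neumann is $O((\rho/2)^n)$. Your Cauchy interlacing argument is shorter and avoids the dependence on Proposition~\ref{prop:degofgammanadeta}, but it yields only the Neumann/Dirichlet comparison; the paper's argument simultaneously gives the stronger statement that the Dirichlet--Neumann counting measure has the same limit, and this stronger fact is what the paper uses for the support claim.

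Accordingly, for the support statement the paper argues purely measure-theoretically: any neighborhood $U$ of a point in the KNS support has a positive lower bound on its $\chi_n$-mass for large $n$, while the mass carried by non-Dirichlet--Neumann eigenvalues tends to zero, so $U$ must contain a Dirichlet--Neumann eigenvalue. Your route is constructive instead: you show directly that every root of every $\gamma_k$ is a Dirichlet--Neumann eigenvalue of $L_{k+3}$ via the two-step lift through constructions~(4) and~(3) of Proposition~\ref{prop:reflectiongivesnewefn}. This is a nice observation, and it is necessary for you precisely because your interlacing step did not establish the $(\rho/2)^n$ decay the paper relies on. Both arguments are valid; the paper's is softer but uses more machinery, while yours is more hands-on and self-contained.
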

\begin{proof}
From Theorem~\ref{thm:structure of DN efns} the number of eigenfunctions of $L_n$ that are Dirichlet but not Neumann is no larger than $\deg(\gamma_n)+\frac n2$. Accordingly the number that are Neumann but not Dirichlet-Neumann does not exceed $2+\deg(\gamma_n)+\frac n2$.  But from Proposition~\ref{prop:degofgammanadeta} the degree of $\gamma_n$ is bounded by a multiple of $\rho^n$ for some $\rho<2$ (because we can check all $\rho_j<2$). The number of eigenvalues of $L_n$ grows like $2^n$ from Lemma~\ref{csgetdegrees}, so the proportion of eigenvalues corresponding to eigenfunctions that are not Dirichlet-Neumann is bounded by a multiple of $(\rho/2)^n$  and makes no contribution to the mass in the limit.  It follows that we get the same limit measure whether we take the limit of the spectrum of the Neumann Laplacian $L_n$, or the Dirichlet Laplacian on $G_n$, or even the normalized measure on the eigenvalue corresponding to Dirichlet-Neumann eigenfunctions.

The computation~\eqref{eqn:spectralmsr} can be justifed using the factorization in Theorem~\ref{thm:factorizationofcn} and the observation that the degree of $c_n$ is two less than the number of vertices of $G_n$, which was computed in Lemma~\ref{csgetdegrees}.  A graph of the spectral measure $\chi_{11}$ for $G_{11}$ is in Figure~\ref{fig:specmsr}.

For the final statement of the lemma, observe that if $\lambda$ is in the support of the KNS measure and $U$ is a neighborhood of $\lambda$ then $U$ has positive KNS measure and hence there is a lower bound on the $G_n$-spectral measure of $U$ for all sufficiently large $n$. We just saw that the proportion of the $G_n$ spectral measure that is not on Dirichlet-Neumann eigenvalues goes to zero as $n\to\infty$, so $U$ must contain a Dirichlet-Neumann eigenvalue. Thus the support of the KNS measure is in the closure of the union of the Dirichlet-Neumann spectra. 
\end{proof}

\begin{figure}
\centering
\includegraphics[width=12cm]{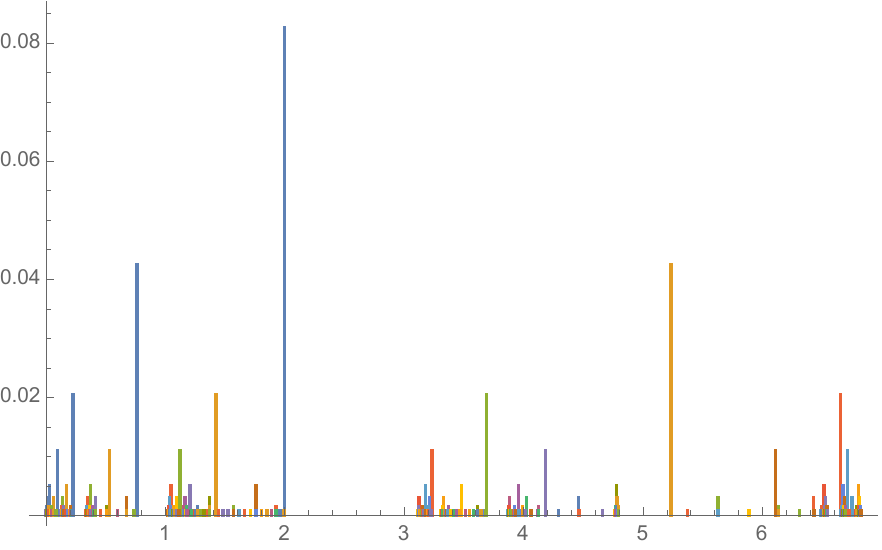}
\caption{Spectral measure $\chi_{11}$ of the Dirichlet Laplacian on $G_{11}$.}\label{fig:specmsr}
\end{figure}

We can compute the multiplicities and the degree of $c_n$, so it is easy to estimate the weights at the eigenvalues that occur as roots of $\gamma_k$.
\begin{lemma}\label{lem:SnasproportionofVn}
\begin{equation*}
	\Bigl| \frac{S_{n-k}}{V_n-2} -\frac16 2^{-k} \Bigr| \leq \frac{n+5}{2^{n+1}}.
	\end{equation*}
\end{lemma}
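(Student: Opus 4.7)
The plan is to prove the estimate by a direct calculation from the explicit closed-form formulas for $S_m$ in Theorem~\ref{thm:factorizationofcn} and for $V_n$ in Lemma~\ref{csgetdegrees}. I would first set $D_n = 6(V_n-2) = 2^{n+2}+(-1)^{n+1}-3$, so that $\frac{S_{n-k}}{V_n-2}=\frac{6S_{n-k}}{D_n}$, and then put both terms of the difference $\frac{S_{n-k}}{V_n-2}-\frac{1}{6}2^{-k}$ over the common denominator $6D_n$ by writing $\frac16 2^{-k} = \frac{2^{-k}D_n}{6D_n}$. This reduces the task to estimating a single numerator in absolute value and then dividing by $6D_n$.

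The key step will be a cancellation of the leading exponential terms. When one expands $6S_{n-k}$ using Theorem~\ref{thm:factorizationofcn}, the dominant contribution is $2^{n-k+2}$, and exactly the same term appears when one expands $2^{-k}D_n$; these cancel, leaving a numerator
\begin{equation*}
N = 9 + \bigl(23-6(n-k)\bigr)(-1)^{n-k} - 2^{-k}\bigl((-1)^{n+1}-3\bigr),
\end{equation*}
which is only linear in $n-k$ together with bounded terms, in stark contrast to the exponential growth of the denominator. This cancellation is really the structural content of the lemma: the asymptotic density of eigenvalues is controlled by how the leading exponential parts of $S_m$ and of $V_n$ line up, and everything else is a controllable correction.

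What remains is routine estimation. The constant $9$ and the $2^{-k}$-term are trivially bounded (the latter by $4\cdot 2^{-k}\leq 2$ when $k\geq 1$, which is the only range appearing in the decomposition of Theorem~\ref{thm:factorizationofcn}), and the middle term by $|23-6(n-k)|\leq 23+6(n-k)$, so $|N|$ will be bounded by a quantity of shape $6(n-k)+C$ with $C$ an explicit constant. Since $2^{n+2}-4\geq 2^{n+1}$ for $n\geq 1$, we have $D_n\geq 2^{n+1}$, and dividing produces a bound of the form $(n+C')/2^{n+1}$. The main obstacle is not conceptual but rather the bookkeeping required to push the constant down to precisely $n+5$; I expect this will require splitting cases on the parity of $n-k$ so that the alternating sign in $(23-6(n-k))(-1)^{n-k}$ works in our favour (cutting the relevant contribution roughly in half on one of the two branches), and perhaps dealing with a few small values of $n-k$ by direct verification.
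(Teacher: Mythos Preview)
Your approach is correct and essentially identical to the paper's own proof: substitute the closed formulas for $S_{n-k}$ and $V_n$, put over a common denominator, cancel the leading $2^{n-k+2}$ terms, and bound the remaining numerator. No parity split or small-case check is actually needed---your crude triangle-inequality bound $|N|\leq 9+2+23+6(n-k)=34+6(n-k)<6(n-k+6)$ together with $D_n\geq 2^{n+1}$ already yields $\frac{n-k+6}{2^{n+1}}\leq\frac{n+5}{2^{n+1}}$ (using $k\geq1$), which is exactly the bound the paper obtains.
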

\begin{proof}
Compute using the formulas for $V_n$ and $S_n$ from Lemma~\ref{csgetdegrees} and Theorem~\ref{thm:factorizationofcn} that
\begin{align*}
	\Bigl|\frac{S_{n-k}}{V_n-2} -\frac16 2^{-k} \Bigr|
	&= \frac16\Bigl|    \frac{9 + (23-6(n-k))(-1)^{n-k} + 2^{n-k+2} }{2^{n+2}-3+(-1)^{n+1} }   - 2^{-k}   \Bigr|\\
	&= \frac16 \Bigl| \frac{9(1-2^{-k}) + (23-6(n-k)+2^{-k})(-1)^{n-k} }{2^{n+2}-3+(-1)^{n+1} } \Bigr|\\
	&\leq \frac{n-k+6}{2^{n+1}}\qedhere
	\end{align*}
\end{proof}

This tells us that for fixed $k$ and large $n\gg k$ the measure $\chi_n$ has atoms of approximately weight $2^{-k}/6$ at each eigenvalue of the Dirichlet Laplacian on $G_k$. 
\begin{corollary}\label{cor:KNSspectfromDirNeum}
The support of the KNS spectral measure is the closure of the union of the Dirichlet spectra of the $G_n$.
\end{corollary}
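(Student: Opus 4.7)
The plan is to prove the two set-theoretic inclusions separately; the real content is the reverse inclusion, as the forward inclusion is essentially immediate. For the forward inclusion---that the support is contained in the closure of the union of Dirichlet spectra---I would simply appeal to Lemma~\ref{lem:computeKNSfromDirichletNeumann}: the support already lies in the closure of the union of Dirichlet--Neumann spectra, and every Dirichlet--Neumann eigenvalue is a fortiori a Dirichlet eigenvalue.

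For the reverse inclusion, the idea is to show that each Dirichlet eigenvalue $\lambda$ of some $G_k$ is actually an atom of the KNS measure $\chi$ with positive mass. By Theorem~\ref{thm:factorizationofcn} together with the definition of the $\gamma_j$, such a $\lambda$ is a simple root of exactly one $\gamma_j$ with $j\leq k$. For every $n\geq j$ the factorization $c_n=\gamma_n\prod_{m=1}^{n-1}\gamma_m^{S_{n-m}}$ combined with formula~\eqref{eqn:spectralmsr} gives
\[
  \chi_n(\{\lambda\})=\frac{S_{n-j}}{V_n-2},
\]
which by Lemma~\ref{lem:SnasproportionofVn} tends to $2^{-j}/6$ as $n\to\infty$. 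Applying the portmanteau theorem (upper semicontinuity on closed sets) to the closed singleton $\{\lambda\}$ then yields
\[
  \chi(\{\lambda\}) \;\geq\; \limsup_{n\to\infty} \chi_n(\{\lambda\}) \;=\; \frac{2^{-j}}{6} \;>\; 0,
\]
so $\lambda$ lies in the support of $\chi$. Since supports are closed by definition, the support then contains the whole closure of the union of the Dirichlet spectra.

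There is no substantial obstacle here: all the real work has already been done in the preceding lemmas. The only point requiring a moment's care is the direction of the portmanteau inequality for closed sets, namely $\limsup_n \mu_n(F)\leq \mu(F)$, which is exactly what allows one to convert the persistent atomic weights $S_{n-j}/(V_n-2)\to 2^{-j}/6$ of the approximating spectral measures into a positive atom of the weak limit.
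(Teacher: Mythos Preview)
Your proof is correct and follows essentially the same approach as the paper's: both establish the forward inclusion via Lemma~\ref{lem:computeKNSfromDirichletNeumann} and the reverse inclusion by showing each Dirichlet eigenvalue carries a persistent atom of mass $S_{n-j}/(V_n-2)\to 2^{-j}/6$ using Lemma~\ref{lem:SnasproportionofVn}. If anything, your version is slightly more careful in explicitly invoking the portmanteau inequality $\limsup_n \chi_n(F)\leq \chi(F)$ for the closed singleton, whereas the paper simply asserts the atom survives in the limit.
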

\begin{proof}
In Lemma~\ref{lem:computeKNSfromDirichletNeumann} we saw that the support of the KNS spectral measure is in the closure of the union of the Dirichlet-Neumann spectra, which is clearly contained in the closure of the union of the Dirichlet spectra.  

Conversely, if $\lambda$ is a Dirichlet eigenvalue on $G_n$ then there is a smallest $k\leq n$ so $\lambda$ is an eigenvalue of $G_k$. Sending $n\to\infty$ we find from Lemma~\ref{lem:SnasproportionofVn} that the KNS measure will have an atom of weight $\frac162^{-k}$ at $\lambda$, which is therefore in the support of the KNS measure.
\end{proof}

To get more precise statements comparing $\chi_m$ to the limiting KNS measure it is useful to fix $m$ and estimate the amount of mass in $\chi_n$ that lies on eigenvalues from $G_k$, $k>m$. Arguing as in the proof of Lemma~\ref{lem:computeKNSfromDirichletNeumann} we might anticipate that this proportion is, in the limit as $n\to\infty$, bounded by $(\rho/2)^m$, so that the eigenvalues from $G_m$ capture all but a geometrically small proportion of the limiting KNS spectral measure.  We want a more precise statement, for which purpose we establish the following lemma.

\begin{lemma}\label{lem:tailestforspectmsr}
If $\rho=\rho_j$ is one of the values in Proposition~\ref{prop:degofgammanadeta} then
\begin{align*}
	\sum_{k=m+1}^n S_{n-k} \rho^k
	&= \frac1{36} \rho^{m+1} \Bigl( 2^{n-m+2}\rho(\rho+1) + (5\rho^2-4\rho-18) (-1)^{n-m}  \\
	&\quad + 6\rho(2-\rho)  (-1)^{n-m}(n-m) + 9(2-\rho^2) \Bigr)
\end{align*}
\end{lemma}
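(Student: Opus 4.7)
The plan is to show that both sides of the claimed identity satisfy the same first-order recursion in $n$. Reindexing $k \mapsto k+1$ and peeling off the $k=m+1$ term, one sees that $T_n := \sum_{k=m+1}^n S_{n-k}\rho^k$ satisfies
\[
T_{n+1} = \rho\, T_n + S_{n-m}\rho^{m+1}, \qquad T_m = 0.
\]
It therefore suffices to verify that the right-hand side $R(n)$ of the claimed formula satisfies the same recursion and the same initial value.

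The vanishing at $n=m$ (so $N := n-m = 0$) is immediate, since the bracket collapses to $4\rho(\rho+1) + (5\rho^2-4\rho-18) + 9(2-\rho^2) = 0$. For the recursive step, I would split $R(n) = \tfrac{\rho^{m+1}}{36}\bigl[\alpha\cdot 2^N + \beta(-1)^N + \gamma(-1)^N N + \delta\bigr]$ with $\alpha = 4\rho(\rho+1)$, $\beta = 5\rho^2 - 4\rho - 18$, $\gamma = 6\rho(2-\rho)$, $\delta = 9(2-\rho^2)$, and obtain $R(n+1)$ by the shift $N \mapsto N+1$. Then $R(n+1) - \rho R(n)$ expands and regroups as a linear combination of the basis functions $2^N$, $(-1)^N N$, $(-1)^N$, and $1$. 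Using the closed form $S_N = \tfrac{1}{36}(9 + 23(-1)^N + 2^{N+2} - 6N(-1)^N)$ from Theorem~\ref{thm:factorizationofcn}, matching coefficients against $S_{n-m}\rho^{m+1}$ produces four polynomial identities in $\rho$.

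Each of these identities reduces to the characteristic equation $\rho^3 - \rho^2 - 2\rho + 1 = 0$ satisfied by $\rho_1, \rho_2, \rho_3$ in Proposition~\ref{prop:degofgammanadeta}. Specifically, the $2^N$ match demands $\rho(\rho+1)(2-\rho) = 1$, the $(-1)^N N$ match demands $\rho(1+\rho)(2-\rho) = 1$, the constant match demands $(1-\rho)(2-\rho^2) = 1$, and the $(-1)^N$ match (which picks up an extra $-\gamma(-1)^N$ contribution when the $\gamma(-1)^N N$ term is shifted) reduces after expansion to $-5\rho^3 + 5\rho^2 + 10\rho - 5 = 0$. All four are the same cubic identity up to a scalar multiple, so the inductive step closes and the lemma follows. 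The main obstacle is purely computational bookkeeping: one must correctly track how the shift $N \mapsto N+1$ redistributes the $\gamma$ contribution into both the $(-1)^N N$ and $(-1)^N$ buckets before the coefficient matching can be carried out cleanly.
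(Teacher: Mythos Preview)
Your proof is correct and takes a genuinely different route from the paper's.  You exploit the elementary first-order recursion $T_{n+1}=\rho T_n + S_{n-m}\rho^{m+1}$ (obtained by shifting the summation index) and then verify that the asserted closed form satisfies it; matching coefficients of $2^N$, $(-1)^N N$, $(-1)^N$, and $1$ produces four polynomial conditions on $\rho$, each of which collapses to the characteristic equation $\rho^3-\rho^2-2\rho+1=0$.  The paper instead feeds the recursion~\eqref{eqn:recusionforS} for $S_n$ into the sum to obtain a \emph{second}-order recursion for $T_n$; the characteristic equation enters there to kill an unwanted $\rho^n$ term, after which the authors solve the resulting recursion from scratch, determining the homogeneous constants $c_1,c_2$ and particular-solution constants $c_3,c_4$.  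Your argument is cleaner as a verification---it never solves a recursion, only checks one---while the paper's argument is more constructive, showing how the formula could be discovered rather than merely confirmed.  Both proofs hinge on exactly the same cubic identity for $\rho$.
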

\begin{proof}
Compute, using $S_0=1$, $S_1=0$ and the recursion~\eqref{eqn:recusionforS} for $S_n$, $n\geq2$, that
\begin{align*}
	\sum_{k=m+1}^{n+1} S_{n+1-k} \rho^k
	&= \rho^{n+1} + \sum_{k=m+1}^{n-1} S_{n+1-k} \rho^k\\
	&=\rho^{n+1} + \sum_{k=m+1}^{n-1}\Bigl(S_{n-k}+2S_{n-1-k} -\frac12\bigl( 1-(-1)^{n-k}\bigr)\Bigr) \rho^k\\
	&=\rho^{n+1}-\rho^n + \sum_{m+1}^n S_{n-k}\rho^k + \sum_{m+1}^{n-1} 2S_{n-1-k}\rho^k \\
	&	\quad-\frac{\rho^n-\rho^{m+1}}{2(\rho-1)} + (-1)^n\frac{(-\rho)^n -(-\rho)^{m+1}}{2(-\rho-1)}\\
	&=  \sum_{m+1}^n S_{n-k}\rho^k + \sum_{m+1}^{n-1} 2S_{n-1-k}\rho^k \\
	&\quad+ \rho^n \Bigl( \rho-1 -\frac\rho{\rho^2-1} \Bigr) +\frac{\rho^{m+1}}{2} \Bigl( \frac{1}{\rho-1}-\frac{(-1)^{n-m}}{\rho+1}\Bigr)
\end{align*}
and conveniently the coefficient of $\rho^n$ has a factor $(\rho^3-\rho^2-2\rho+1)$, and the values $\rho_j$ are precisely the roots of this equation (see the end of the proof of Proposition~\ref{prop:degofgammanadeta}). Thus we have a recursion for our desired quantity, with the form 
\begin{equation*}
	\sum_{k=m+1}^{n+1} S_{n+1-k} \rho^k
	=  \sum_{m+1}^n S_{n-k}\rho^k + \sum_{m+1}^{n-1} 2S_{n-1-k}\rho^k 
	+ \frac{\rho^{m+1}}{2} \Bigl( \frac{1}{\rho-1}-\frac{(-1)^{n-m}}{\rho+1}\Bigr).
	\end{equation*}
The homogeneous part of the solution is  $\bigl(c_1 2^{n-m}+c_2 (-1)^{n-m}\bigr)\rho^{m+1}$.  The inhomogeneous part has terms $c_3\rho^{m+1}$ and  $c_4(n-m)(-1)^{n-m}\rho^{m+1}$. It is easy to calculate that
\begin{align*}
	c_3=\frac{-1}{4(\rho-1)} = \frac{(2-\rho^2)}4\\
	c_4=\frac{1}{6(\rho+1)} = \frac{\rho(2-\rho)}{6}
	\end{align*}
where the latter expression in each formula is from $\rho^3-\rho^2-2\rho+1=0$.  Then one can compute $c_1$ and $c_2$ from the initial values $\sum_{m+1}^{m+1}S_{n-k}\rho^k=\rho^{m+1}$ and $\sum_{m+1}^{m+2}S_{n-k}\rho^k=\rho^{m+2}$, which themselves come from $S_0=1$, $S_1=0$, or directly verify that the expression in the lemma has these initial values.
\end{proof}

\begin{corollary}\label{cor:limitspectralmeasure}
In the limit $n\to\infty$ the proportion of the spectral mass of $G_n$ that lies on eigenvalues of $G_m$ is
\begin{equation*}
	\frac1{3\sqrt{7}}\sum_j \cos\bigl(\phi+\frac{2j\pi}3\bigr) \rho_j^2(\rho_j+1)\bigl(\frac{\rho_j}2\bigr)^m
	\end{equation*}
where $\phi=\frac13\arctan(-3\sqrt{3})$ as in Proposition~\ref{prop:degofgammanadeta}.
\end{corollary}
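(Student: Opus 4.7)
My plan is to combine the factorization of $c_n$ from Theorem~\ref{thm:factorizationofcn} with the asymptotic data collected in Lemma~\ref{csgetdegrees}, Proposition~\ref{prop:degofgammanadeta}, and Lemma~\ref{lem:tailestforspectmsr}. The key observation is that the proportion of spectral mass of $\chi_n$ carried by eigenvalues that first appear at some level $k$ with $m<k\leq n$ (equivalently, the complement of those lying in the Dirichlet spectrum of $G_m$) equals
\begin{equation*}
	p_{n,m}:=\frac{1}{V_n-2}\sum_{k=m+1}^{n}\deg(\gamma_k)\,S_{n-k},
\end{equation*}
since by Proposition~\ref{lem:efnpropsforgamman} each $\gamma_k$ has $\deg(\gamma_k)$ simple roots, and each contributes multiplicity $S_{n-k}$ to $c_n$. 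This is the quantity whose limit I will compute.

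\textbf{Reduction to three geometric tails.} Substituting the closed form
\begin{equation*}
	\deg(\gamma_k)=\frac{2}{\sqrt{7}}\sum_{j=1}^{3}\cos(\phi+\theta_j)\,\rho_j^{k}
\end{equation*}
from Proposition~\ref{prop:degofgammanadeta} (with $\theta_j$ the phases matched to $\rho_j$ there, so that $\{\theta_j\}=\{0,2\pi/3,4\pi/3\}$) and exchanging the finite sum over $j$ with the sum over $k$, the problem reduces to evaluating the three sums $\sum_{k=m+1}^{n}S_{n-k}\rho_j^{k}$, each of which is computed in closed form by Lemma~\ref{lem:tailestforspectmsr}.

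\textbf{Extraction of the limit.} From Lemma~\ref{csgetdegrees} one has $V_n-2\sim 2^{n+2}/6$, whereas the only term in the closed form of Lemma~\ref{lem:tailestforspectmsr} that grows like $2^{n}$ is $\tfrac{1}{36}\rho_j^{m+2}(\rho_j+1)\,2^{n-m+2}$; the other four terms are bounded by a constant multiple of $(n-m+1)\rho_j^{m+1}$ and hence vanish in the ratio with $V_n-2$. Combining these asymptotics,
\begin{align*}
	p_{n,m}
	&\sim \frac{6}{2^{n+2}}\cdot\frac{2}{\sqrt{7}}\sum_{j}\cos(\phi+\theta_j)\cdot\frac{\rho_j^{m+2}(\rho_j+1)\,2^{n-m+2}}{36}\\
	&= \frac{1}{3\sqrt{7}}\sum_{j}\cos(\phi+\theta_j)\,\rho_j^{2}(\rho_j+1)\Bigl(\frac{\rho_j}{2}\Bigr)^{m}
\end{align*}
as $n\to\infty$ with $m$ fixed, which is the expression in the corollary.

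\textbf{Main obstacle.} No new analytic input is required; the proof is a straightforward algebraic combination of results already in hand. The only real nuisance is the bookkeeping of the three branches $\rho_1,\rho_2,\rho_3$ and their phase angles $\theta_j$, together with a uniform-in-$n$ verification that the subdominant pieces produced by Lemma~\ref{lem:tailestforspectmsr} (which are either constant in $n$ or grow only linearly in $n-m$) are indeed negligible once divided by $V_n-2\asymp 2^{n+2}/6$.
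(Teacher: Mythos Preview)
Your proof is correct and follows essentially the same approach as the paper: both express $\deg(\gamma_k)$ via Proposition~\ref{prop:degofgammanadeta}, invoke Lemma~\ref{lem:tailestforspectmsr} to evaluate $\sum_{k=m+1}^n S_{n-k}\rho_j^k$, divide by $V_n-2\sim 2^{n+2}/6$, and extract the dominant $2^{n-m+2}$ term. You are also right that the quantity computed is the proportion of mass \emph{not} on eigenvalues of $G_m$ (the tail $p_{n,m}$), which is exactly what the paper's proof computes as well and is consistent with the later use of this corollary in Section~\ref{section:gaps}.
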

\begin{proof}
Dividing $\sum_{k=m+1}^n S_{n-k}\rho_j^k$  by $V_n-2=\frac16\bigl(2^{n+2}+(-1)^{n+1}-3\bigr)$, using the result of Lemma~\ref{lem:tailestforspectmsr} and sending $n\to\infty$ gives
\begin{equation*}
	\lim_{n\to\infty} \frac1{V_n-2}\sum_{k=m+1}^n S_{n-k}\rho_j^k
	=\frac16 \rho_j^2(\rho_j+1) \bigl(\frac{\rho_j}2\bigr)^m
	\end{equation*}
whereupon the result follows by substitution into the expression
\begin{equation*}
 	\sum_{k=m+1}^n S_{n-k} \degree(\gamma_k) = \frac{2}{\sqrt{7}}\sum_{j=1}^3 \cos\bigl( \phi+\frac{2j\pi}3 \bigr) \sum_{k=m+1}^n S_{n-k}\rho_j^k
\end{equation*}
from Proposition~\ref{prop:degofgammanadeta}.
\end{proof}

A slightly more involved computation gives a bound on the $m$ needed to obtain a given proportion of the KNS spectral measure.

\begin{theorem}\label{thm:howtogetmostofspect}
For any $\epsilon>0$ there is $m$ comparable to $|\log\epsilon|$ such that, for $n\geq m$, all but $\epsilon$ of the spectral mass of any $G_n$ is supported on eigenvalues of the Laplacian on $G_m$.
\end{theorem}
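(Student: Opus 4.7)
The plan is to bound, uniformly in $n \geq m$, the proportion of $\chi_n$-mass carried by eigenvalues that are roots of $\gamma_k$ for some $k > m$, and to show this proportion decays geometrically in $m$. By Lemma~\ref{lem:computeKNSfromDirichletNeumann} together with Theorem~\ref{thm:factorizationofcn}, this proportion is
\begin{equation*}
\frac{1}{V_n - 2}\sum_{k=m+1}^n S_{n-k}\degree(\gamma_k),
\end{equation*}
and since the Dirichlet eigenvalues of $G_m$ are exactly the roots of $c_m$, which by definition of the $\gamma_k$ are exactly the roots of $\gamma_k$ for $k \leq m$, any bound on the above display also bounds the proportion of $\chi_n$-mass not supported on $G_m$-eigenvalues.

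Next, apply Proposition~\ref{prop:degofgammanadeta} to write $\degree(\gamma_k)$ as a linear combination of $\rho_1^k$, $\rho_2^k$, $\rho_3^k$, where the $\rho_j$ are the roots of $\rho^3 - \rho^2 - 2\rho + 1$, reducing the task to controlling the three sums $\sum_{k=m+1}^n S_{n-k}\rho_j^k$. Lemma~\ref{lem:tailestforspectmsr} gives each one in closed form: the dominant term $\frac{1}{9}\rho_j^{m+2}(\rho_j+1)2^{n-m}$, once divided by $V_n - 2 \sim 2^{n+2}/6$ (Lemma~\ref{csgetdegrees}), produces the leading contribution $\frac{(\rho_j+1)\rho_j^2}{6}(\rho_j/2)^m$ that already appeared in Corollary~\ref{cor:limitspectralmeasure}. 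The subdominant terms from Lemma~\ref{lem:tailestforspectmsr} are of two types, namely $C_1 \rho_j^{m+1}/2^n$ and $C_2 (n-m)\rho_j^{m+1}(-1)^{n-m}/2^n$, both of which, using the uniform boundedness of $x \mapsto x 2^{-x}$ on $[0,\infty)$, can be absorbed into an $O((\rho_j/2)^m)$ bound that is uniform in $n \geq m$.

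Summing over $j$, and using that $|\rho_j| < 2$ for all three roots (a fact already invoked in the proof of Lemma~\ref{lem:computeKNSfromDirichletNeumann}), yields a uniform bound of the form $C r^m$ with $r = \max_j |\rho_j|/2 \in (0,1)$ and $C$ an explicit constant depending only on $\phi$, the $\rho_j$, and $\sqrt{7}$. Solving $C r^m \leq \epsilon$ produces $m \geq (\log C + |\log \epsilon|)/\log(1/r)$, so the required $m$ is comparable to $|\log\epsilon|$ as claimed. The main technical obstacle is keeping the $(n-m)$-dependent error of Lemma~\ref{lem:tailestforspectmsr} under control uniformly in $n \geq m$ without losing the geometric decay in $m$; this is handled by the $2^{-n}$ prefactor together with the elementary bound $(n-m) 2^{-(n-m)} \leq C'$, after which the argument is a direct computation from the identities already established.
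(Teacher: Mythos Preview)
Your proof is correct and follows essentially the same approach as the paper: both arguments express the tail mass as $\frac{1}{V_n-2}\sum_{k=m+1}^n S_{n-k}\degree(\gamma_k)$, expand $\degree(\gamma_k)$ via Proposition~\ref{prop:degofgammanadeta}, apply Lemma~\ref{lem:tailestforspectmsr} to each $\rho_j$-sum, and then control the $(n-m)$-dependent remainder using the boundedness of $x\mapsto x2^{-x}$ to obtain a uniform bound $C(\max_j|\rho_j|/2)^m$. The paper carries explicit constants throughout (arriving at $\frac{12}{\sqrt{7}}(|\rho_3|/2)^{m+1}$), whereas you leave them implicit, but the structure and all key ideas are the same.
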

\begin{proof}
Decompose the sum~\eqref{eqn:spectralmsr} into the sum  $\sum_{k=1}^m$  over eigenvalues of the Laplacian on $G_m$  and $\sum_{m+1}^n$ of eigenvalues of the Laplacian on $G_n$ that are not in the $G_m$ spectrum. As in the previous proof, use Proposition~\ref{prop:degofgammanadeta} to write
\begin{equation*}
 	\sum_{k=m+1}^n S_{n-k} \degree(\gamma_k) = \frac{2}{\sqrt{7}}\sum_{j=1}^3 \cos\bigl( \phi+\frac{2j\pi}3 \bigr) \sum_{k=m+1}^n S_{n-k}\rho_j^k
\end{equation*}
and then estimate using Lemma~\ref{lem:tailestforspectmsr}.  From the specific values of $\rho_j$ in Proposition~\ref{prop:degofgammanadeta} one determines
\begin{align}
	\sum_{k=m+1}^n S_{n-k}\rho_1^k &\leq \frac1{36}|\rho_1|^{m+1} \Bigl( \frac13 2^{n-m+2} + 25(n-m)+10\Bigr),\notag\\
	\sum_{k=m+1}^n S_{n-k}\rho_2^k &\leq \frac1{36}|\rho_2|^{m+1} \Bigl( \frac23 2^{n-m+2} + 5(n-m)+36\Bigr),\label{eqn:estimatesforcontribstospectmsr}\\
	\sum_{k=m+1}^n S_{n-k}\rho_2^k &\leq \frac1{36}|\rho_3|^{m+1} \Bigl( \frac{11}2 2^{n-m+2} + 3(n-m) + 21 \Bigr).\notag
	\end{align}
The largest of the $|\rho_j|$ is $|\rho_3|$, so we bound the terms not containing $2^{n-m+2}$ by $(n-m+2)|\rho_3|^{m+1}$.  For the terms that do contain $2^{n-m+1}$  we use the readily computed fact that $|\rho_1|^{m+1}/3+2|\rho_2|^{m+1}/3\leq |\rho_3|^{m+1}/2$ for all $m$ and combine these to obtain
\begin{equation*}
	\sum_{k=m+1}^n S_{n-k} \degree(\gamma_k)
	\leq \frac2{\sqrt{7}} \rho_3^{m+1} \Bigl( \frac16 2^{n-m+2} +  (n-m+2)\Bigr). 
	\end{equation*}
The contribution to the KNS spectral measure is computed by dividing by $V_n-2=\frac16\bigl(2^{n+2}+(-1)^{n+1}-3\bigr)$, which was computed in Lemma~\ref{csgetdegrees}.  This is larger than $\frac16 2^{n+1}$ because $n\geq1$, so from the above reasoning
\begin{equation*}
	\sum_{k=m+1}^n \frac{S_{n-k}}{V_n-2} \degree(\gamma_k)
	\leq  \frac8{\sqrt{7}} \Bigl( 1+ 6(n-m+2)2^{-(n-m+2)} \Bigr)\Bigl(\frac{|\rho_3|}2\Bigr)^{m+1}
	\end{equation*}
but $l2^{-l}$ is decreasing with maximum value $\frac12$, so we readily obtain
\begin{equation*}
\sum_{k=m+1}^n \frac{S_{n-k}}{V_n-2} \degree(\gamma_k)
	\leq  \frac{12}{\sqrt{7}} \Bigl(\frac{|\rho_3|}2\Bigr)^{m+1}<\epsilon
\end{equation*}
provided $m\geq C|\log\epsilon|$, where $C$ is a constant involving $\log\rho_3$.  This estimate says that at most $\epsilon$ of the spectral mass can occur outside the spectrum of $G_m$ once $m$ is of size $C|\log\epsilon|$.
\end{proof}

\section{Cantor structure of the spectrum}\label{section:gaps}

Our recursions for $c_n$ and $\gamma_n$ provide a method for computing the spectra of the $G_n$ for small $n$.  Using a desktop computer we were able to compute them for $n\leq14$.  By direct computation from~\eqref{eqn:estimatesforcontribstospectmsr}, using $(n-m)2^{1-(n-m)}\leq1$, these eigenvalues constitute at least 39\% of the spectrum (counting multiplicity) of any $G_n$, and the asymptotic estimate from Corollary~\ref{cor:limitspectralmeasure} says that as $n\to\infty$ they capture approximately 76\% of the KNS spectral measure.   The result is shown in Figure~\ref{fig:gaps}.

\begin{figure}
\includegraphics[width=13cm]{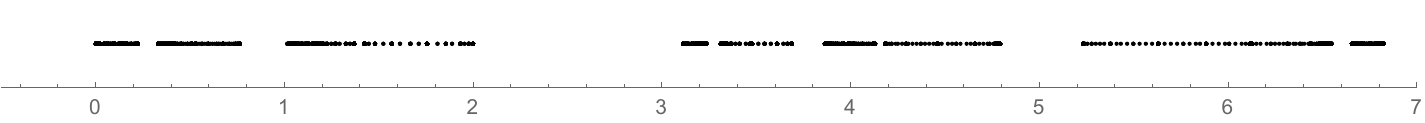}
\caption{The spectrum of $G_{14}$, illustrating gaps}\label{fig:gaps}
\end{figure}

Comparing Figures~\ref{fig:rootsofgamman}, \ref{fig:specmsr} and~\ref{fig:gaps} it appears that there are structural properties of the spectrum that are independent of $n$.  These should be features of the dynamics described in Section~\ref{section:dynamics}. The main result of this section is that the support of the KNS spectral measure is a Cantor set.  To prove this we use the dynamics  established in Corollary~\ref{cor:rootdynamics}, namely that for $n\geq4$ the eigenvalues first seen at level $n$, which are the roots of $\gamma_n=\gamma_n(\lambda)$, are also precisely the roots of $\zeta_n=\gamma_n/\eta_n$, which satisfies the recursion
\begin{equation}\label{eq:rootdynamics}
	\zeta_n-2=\Bigl( 1+\frac2{\zeta_{n-1}}\Bigr)(\zeta_{n-2}^2-4)
	\end{equation}
The initial data were given in~\eqref{eq:zetainitdata}.

We begin by describing an escape criterion under which future iterates  of~\eqref{eq:rootdynamics} do not get close to zero, and therefore cannot produce values in the spectrum. 

\begin{lemma}\label{lem:easyescape}
If  $n\geq 4$ and $|\zeta_{n-2}|>2$ and $|\zeta_{n-1}|>2$ then $|\zeta_m|\to\infty$ as $m\to\infty$.
\end{lemma}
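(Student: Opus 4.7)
The plan is to work at a fixed real $\lambda$ and treat~\eqref{eq:rootdynamics} as an autonomous recursion on $\mathbb{R}$. Since $\zeta_n=\gamma_n/\eta_n$ is a ratio of real polynomials, $\zeta_n(\lambda)\in\mathbb{R}$ whenever it is defined, and the recursion propagates real values (the only obstruction would be a pole $\zeta_{n-1}=0$, excluded by the hypothesis $|\zeta_{n-1}|>2$). I will prove escape in two stages: first show that the region $\zeta_m>2$ is forward invariant once entered, and then show that on this region the even and odd subsequences each grow at least geometrically with ratio $4$.

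For the first stage, I will argue that $|\zeta_{n-2}|>2$ together with $|\zeta_{n-1}|>2$ forces $\zeta_n>2$ strictly. Rewrite~\eqref{eq:rootdynamics} as $\zeta_n-2=(1+2/\zeta_{n-1})(\zeta_{n-2}^2-4)$. The hypothesis $|\zeta_{n-2}|>2$ gives $\zeta_{n-2}^2-4>0$. For the first factor, the sign analysis splits into two cases: if $\zeta_{n-1}>2$ then $1+2/\zeta_{n-1}\in(1,2)$, while if $\zeta_{n-1}<-2$ then $2/\zeta_{n-1}\in(-1,0)$ so $1+2/\zeta_{n-1}\in(0,1)$. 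In both cases the factor is strictly positive, whence $\zeta_n-2>0$. The same argument applied to the pair $(\zeta_{n-1},\zeta_n)$ yields $\zeta_{n+1}>2$, and then the pair $(\zeta_n,\zeta_{n+1})$ consists of two real numbers strictly exceeding $2$. A trivial induction gives $\zeta_m>2$ for every $m\ge n$. The only point requiring attention in this step is that the negative-$\zeta_{n-1}$ case must be handled separately; this is the mildly tricky corner of the proof.

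For the second stage I exploit that once both $\zeta_m$ and $\zeta_{m+1}$ exceed $2$, the factor $1+2/\zeta_{m+1}$ is strictly greater than $1$, so~\eqref{eq:rootdynamics} yields
\begin{equation*}
\zeta_{m+2}-2 \;>\; \zeta_m^2-4 \;=\; (\zeta_m-2)(\zeta_m+2) \;>\; 4(\zeta_m-2).
\end{equation*}
Setting $\delta_m:=\zeta_m-2>0$ this reads $\delta_{m+2}>4\delta_m$, so iterating along the even and odd subsequences separately gives $\delta_{n+2k}>4^k\delta_n$ and $\delta_{n+2k+1}>4^k\delta_{n+1}$. Since $\delta_n$ and $\delta_{n+1}$ are both strictly positive by the previous stage, both subsequences diverge, and hence $|\zeta_m|\to\infty$ as claimed.

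In summary, the argument is a direct estimate from~\eqref{eq:rootdynamics}: verify positivity of the two factors on the right-hand side to propagate the invariant set $\{\zeta>2\}$, then extract the factor-of-$4$ doubling on even and odd indices. No deep input beyond the recursion itself is needed, and no truly hard step appears; the only care required is the sign bookkeeping when $\zeta_{n-1}$ happens to lie below $-2$.
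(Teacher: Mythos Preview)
Your proof is correct and follows essentially the same approach as the paper: establish $\zeta_n>2$ and $\zeta_{n+1}>2$ by sign analysis of the two factors in~\eqref{eq:rootdynamics}, then use $1+2/\zeta_{m+1}>1$ to extract the geometric bound $\zeta_{m+2}-2>4(\zeta_m-2)$. The only cosmetic difference is that the paper handles the sign of $1+2/\zeta_{n-1}$ in a single line (noting $|\zeta_{n-1}|>2$ forces it positive) rather than splitting into the two cases $\zeta_{n-1}>2$ and $\zeta_{n-1}<-2$ as you do.
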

\begin{proof}
Since $|\zeta_{n-1}|>2$  we have $1+\frac2{\zeta_{n-1}}>0$.  At the same time, $\zeta_{n-2}^2>4$, so $\zeta_n>2$ from~\eqref{eq:rootdynamics}. The same argument gives $\zeta_{n+1}>2$.  Now $\zeta_{n+1}>2$ implies $1+\frac2{\zeta_{n+1}}>1$ and thus from~\eqref{eq:rootdynamics}
\begin{equation*}
\zeta_{n+2}-2>\zeta_n^2-4=(\zeta_n-2)(\zeta_n+2)>4(\zeta_n-2).
\end{equation*}
This argument applies for all $\zeta_m$, $m\geq n+2$, so
\begin{equation*}
\zeta_m\geq 2^{m-n-2}\bigl(\min\{\zeta_n,\zeta_{n+1}\}-2\bigr)\to\infty
\end{equation*}
as $m\to\infty$.
\end{proof}

A similar analysis gives the following

\begin{lemma}\label{lem:pointsononeside}
Suppose $n\geq3$. For any $\delta\in(0,2)$ there is $k$ such that the region $|\zeta_{n-1}|>2$, $\zeta_{n}\in(2-\delta,2)$ contains a root of $\zeta_{n+2k}$.
\end{lemma}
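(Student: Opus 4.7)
The plan is to exploit the invariance of $\zeta=2$ under~\eqref{eq:rootdynamics} together with the expansive nature of the recursion near this value, then conclude by an intermediate value argument.

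First I would identify a reference point $\lambda_*\in\mathbb{R}$ on the closure of $R:=\{\lambda:|\zeta_{n-1}(\lambda)|>2,\ \zeta_n(\lambda)\in(2-\delta,2)\}$ with $\zeta_n(\lambda_*)=2$. The condition $\zeta_n=2$ amounts to the polynomial equation $\gamma_n=2\eta_n$, whose real solutions can be located by direct computation for $n=3$ (for instance $\lambda_*=4$, where $\zeta_3(4)=2$ and $\zeta_2(4)=-2$, with $R$ populated just to the left of $\lambda_*$ since $\zeta_2(\lambda)<-2$ there) and obtained for larger $n$ by inductive use of Proposition~\ref{prop:gammarecurse}. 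At $\lambda_*$ the recursion forces $\zeta_{n+2j}(\lambda_*)=2$ for all $j\geq0$, while the odd iterates satisfy $\zeta_{n+2j+1}(\lambda_*)-2 = 2(\zeta_{n+2j-1}(\lambda_*)^2-4)$, which is either identically zero or grows to infinity with~$j$.

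Second, I would expand $\zeta_{n+2k}$ along a one-sided curve approaching $\lambda_*$ on which $\zeta_n(\lambda)=2-\epsilon$ with $\epsilon\in(0,\delta)$. Writing $\alpha_j(\lambda):=1+2/\zeta_{n+2j+1}(\lambda)\in[1,2]$ and iterating the identity $\zeta_{m+2}-2=\alpha(\zeta_m-2)(\zeta_m+2)$, the leading-order Taylor expansion yields
\begin{equation*}
\zeta_{n+2k}(\lambda)-2 = -\epsilon\cdot 4^k \prod_{j=0}^{k-1}\alpha_j(\lambda_*) + R_k(\lambda,\epsilon),
\end{equation*}
with $R_k=O(\epsilon^2)$ collecting the per-step corrections. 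The leading term has magnitude at least $4^k\epsilon$ and fixed sign on the one-sided interval.

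Third, I would apply the intermediate value theorem: at $\lambda_*$ we have $\zeta_{n+2k}(\lambda_*)=2$, while at any $\lambda'\in R$ with $\zeta_n(\lambda')$ close to $2-\delta$ the leading estimate forces $\zeta_{n+2k}(\lambda')<0$, provided $k$ is large enough (of order $\log_4(2/\delta)$). By continuity of the rational function $\zeta_{n+2k}$ on the interval (and since $\eta_n(\lambda_*)\neq0$ so no pole appears nearby), some intermediate $\lambda''$ gives $\zeta_{n+2k}(\lambda'')=0$. Because $\lambda''$ lies within distance $O(4^{-k})$ of $\lambda_*$, the open conditions $|\zeta_{n-1}(\lambda'')|>2$ and $\zeta_n(\lambda'')\in(2-\delta,2)$ persist by continuity, placing $\lambda''$ in $R$.

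The main obstacle will be making the iterated Taylor expansion rigorous: the remainder $R_k$ must remain negligible relative to the leading $4^k\epsilon$ term even when $\epsilon$ is as large as $\sim 4^{-k}$. This requires tracking the multiplicative error accumulation across $k$ iterations and uniform bounds on $\alpha_j(\lambda)$ over the relevant neighborhood of $\lambda_*$. Morally this is the standard hyperbolic-dynamics statement that preimages of a generic point cluster at a repelling fixed point, adapted here to the two-dimensional recursion in $(\zeta_{m-1},\zeta_m)$.
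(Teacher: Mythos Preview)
Your dynamical picture is correct: $\zeta=2$ is a repelling fixed point of the even-index recursion, so even iterates starting just below $2$ are pushed away and eventually pass through zero. You also correctly identify the difficulty in your approach---controlling the accumulated error in the iterated Taylor expansion. The point is that this difficulty is entirely self-imposed; the paper avoids it by working with exact inequalities rather than a linearization at a reference point.

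The paper's argument has two pieces. First, the recursion~\eqref{eq:rootdynamics} immediately gives that whenever $|\zeta_{n+2j-1}|>2$ and $\zeta_{n+2j}\in(0,2)$ one has $\zeta_{n+2j+1}>2$, since both factors $(1+2/\zeta_{n+2j})$ and $(\zeta_{n+2j-1}^2-4)$ are positive. This is precisely what you need to justify $\alpha_j\in[1,2]$ throughout the region, not merely at $\lambda_*$; you only verify the odd-index bound at the reference point, and a continuity argument would give a neighborhood that may shrink with $j$. Second, once $\zeta_{n+2j+1}>2$ is known, the \emph{exact} identity
\[
\zeta_{n+2j+2}-2=(1+2/\zeta_{n+2j+1})(\zeta_{n+2j}-2)(\zeta_{n+2j}+2)
\]
with $\zeta_{n+2j}\in(0,2)$ yields $2-\zeta_{n+2j+2}>2(2-\zeta_{n+2j})$, because the first factor exceeds $1$ and the last exceeds $2$. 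So the distance from $2$ at least doubles each step, with no remainder $R_k$ to track. After at most roughly $\log_2(2/\delta)$ steps the image of the region under $\zeta_{n+2k}$ cannot remain inside $(0,2)$, and since one end stays near $2$ the intermediate value theorem produces the root.

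The search for a concrete $\lambda_*$ solving $\gamma_n=2\eta_n$ is likewise unnecessary: the argument operates directly on intervals of values of $\zeta_{n+2j}$, using only that the map fixes $2$ and expands nearby.
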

\begin{proof}
Observe that if $|\zeta_{n+2j-1}>2|$ and $\zeta_{n+2j}\in(0,2)$ then from~\eqref{eq:rootdynamics} 
\begin{equation} \label{eq:zeta_n+2j+1>2}
	\zeta_{n+2j+1}-2=\Bigl(1+\frac2{\zeta_{n+2j}}\Bigr) (\zeta_{n+2j-1}-2)(\zeta_{n+2j-1}+2)
	\end{equation}
is a product of positive terms, so $\zeta_{n+2j+1}>2$.

Now suppose $\zeta_{n+2j+1}>2$.  Then the map $\zeta_{n+2j}\mapsto\zeta_{n+2j+2}$ is continuous and has $2\mapsto2$, so it 
takes an interval $(2-\delta_j,2)\subset(0,2)$ to an interval covering $(2-2\delta_j,2)$ because subsitution into~\eqref{eq:rootdynamics} gives
\begin{equation*}
	2-\delta_j\mapsto
	2- \delta_j (4-\delta_j) \Bigl(1+\frac2{\zeta_{n+2j+1}}\Bigr) 
	<2-2\delta_j.
	\end{equation*}

It follows from the above reasoning that if we begin with the region $|\zeta_{n-1}|>2$ and $\zeta_n\in(2-\delta,2)$ then the inductive statement that the $j^{\text{th}}$ iterated image satisfies $\zeta_{n+2j-1}>2$ and $\zeta_{n+2j}\in(0,2)$ for $1\leq j\leq k$ must fail before $k>\log_2\delta$.  Moreover it will fail because the image $\zeta_{n+2k}$ is an interval that strictly covers $(0,2)$, so there is a zero of $\zeta_{n+2k}$ in the required region.
\end{proof}

We now wish to proceed by analyzing a few steps of the orbit of a point $\tlambda$ at which $\zeta_n(\tlambda)=0$. This is complicated a little by the fact (immediate from~\eqref{eq:rootdynamics}) that $\zeta_{n+1}$ may have a pole at $\tlambda$.  We need a small lemma.

\begin{lemma}\label{lem:nopm2beforeroot}
If $\zeta_n(\tlambda)=0$ then $\zeta_m(\tlambda)\not\in\{-2,2\}$ for $m<n$.
\end{lemma}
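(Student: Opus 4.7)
The plan is to assume for contradiction that $\zeta_m(\tlambda)\in\{-2,2\}$ for some $m<n$ and iterate the recursion of Corollary~\ref{cor:gammarecurse} upward until a contradiction emerges at level $n$ or $n+1$. Two preparatory observations set up the argument. By definition the polynomials $\gamma_k$ are pairwise coprime, so $\gamma_n(\tlambda)=0$ forces $\gamma_j(\tlambda)\neq 0$ for every $j\neq n$. Since $\eta_k$ is a product of powers of $\gamma_j$ with $j<k$, it follows that $\eta_k(\tlambda)\neq 0$ for every $k\leq n$, while $\eta_{n+1}(\tlambda)=0$ because $\gamma_n$ appears as a factor of $\eta_{n+1}$.

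The condition $\zeta_m(\tlambda)\in\{-2,2\}$ is equivalent to $(\gamma_m^2-4\eta_m^2)(\tlambda)=0$. Substituting this into Corollary~\ref{cor:gammarecurse} at level $m+2$,
\[
(\gamma_{m+2}-2\eta_{m+2})\,\gamma_{m-1} = (\gamma_{m+1}+2\eta_{m+1})(\gamma_m^2-4\eta_m^2),
\]
the right-hand side vanishes at $\tlambda$ while the factor $\gamma_{m-1}(\tlambda)$ is nonzero by coprimality, yielding $\gamma_{m+2}(\tlambda)=2\eta_{m+2}(\tlambda)$. Three subcases arise according to the size of $m+2$ relative to $n$. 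If $m+2\leq n-1$, both sides of this last equality are nonzero and $\zeta_{m+2}(\tlambda)=2$, so the argument can be repeated with $m$ replaced by $m+2$. If $m+2=n$, then $\gamma_n(\tlambda)=0$ forces $\eta_n(\tlambda)=0$, contradicting the coprimality observation. If $m+2=n+1$, then $\eta_{n+1}(\tlambda)=0$ forces $\gamma_{n+1}(\tlambda)=0$, contradicting the coprimality of $\gamma_{n+1}$ with $\gamma_n$.

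Each iteration of the first subcase increases $m$ by $2$, so after finitely many steps the index reaches either $n$ or $n+1$, the choice being dictated by the parity of $n-m$. In either case one of the terminal subcases applies and produces the required contradiction. The only obstacle I foresee is the bookkeeping of the iteration and verifying that Corollary~\ref{cor:gammarecurse} is available at every level used; since that corollary holds for all levels $\geq 4$ and our applications involve $m+2\geq 4$, this is automatic for $m\geq 2$, which is the range in which $\zeta_m$ is defined.
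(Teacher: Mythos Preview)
Your argument via Corollary~\ref{cor:gammarecurse} is correct for $m\geq 2$ and in some ways cleaner than the paper's. The paper works directly with the rational-function recursion~\eqref{eq:rootdynamics}, treating $\zeta_m=-2$ and $\zeta_m=2$ separately (forward propagation for $-2$, a backward minimality argument for $2$) and checking $n\leq 3$ by hand. Your use of the polynomial identity $(\gamma_{m+2}-2\eta_{m+2})\gamma_{m-1}=(\gamma_{m+1}+2\eta_{m+1})(\gamma_m^2-4\eta_m^2)$ handles $\pm 2$ uniformly and sidesteps any worry about poles, since you never divide.

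There is one gap. Your final sentence asserts that $\zeta_m$ is defined only for $m\geq 2$, but the paper takes $\zeta_1=\gamma_1/\eta_1=\lambda-2$ (with $\eta_1=\gamma_0=1$) and invokes the lemma in Theorem~\ref{thm:valuesandgaps} with $m=n-2$ for $n\geq 3$, so the case $m=1$ is genuinely needed. Since Corollary~\ref{cor:gammarecurse} is stated only for levels $\geq 4$, your iteration cannot start at $m=1$. The fix is short: $\zeta_1(\tlambda)=\pm 2$ forces $\tlambda\in\{0,4\}$, and from~\eqref{eq:zetainitdata} one computes $\zeta_2(0)=\zeta_2(4)=-2$, reducing to the case $m=2$ already covered (for $n\geq 3$; if $n=2$ then $\tlambda=3\pm\sqrt{5}$ gives $\zeta_1(\tlambda)=1\pm\sqrt{5}\notin\{-2,2\}$ directly).
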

\begin{proof}
Under the hypothesis there are no other $\gamma_m$ which vanish at $\tlambda$, so $\zeta_m$, $m<n$ has neither zeros nor poles at $\tlambda$; we use this fact several times without further remark.

There are some initial cases for which~\eqref{eq:rootdynamics} does not assist in computing $\zeta_m(\tlambda)$.   Evidently the statement of the lemma is vacuous if $n=1$.  If $n=2$ we compute $\tlambda=3\pm\sqrt{5}$, so  $\zeta_1(\tlambda)=\tlambda-2\not\in\{-2,2\}$.  If $n=3$ it is more useful to check that both $\zeta_1(\tlambda)=\pm2$ and $\zeta_2(\tlambda)=-2$ correspond to $\tlambda\in\{0,4\}$, while $\zeta_2(\tlambda)=-2$ implies $\tlambda=4\pm2\sqrt{2}$, because these are exactly the four solutions of  $\zeta_3(\tlambda)=2$. This verifies the lemma if $n=1,2,3$.  Moreover in the case $n\geq4$ the equivalence of $\zeta_2(\tlambda)\in\{-2,2\}$ with $\zeta_3(\tlambda)=2$ may also be used to exclude both of these possibilities, because if they hold then iteration of~\eqref{eq:rootdynamics} gives $\zeta_m(\tlambda)=2$ for all $m\geq3$ in contradiction to $\zeta_n(\tlambda)=0$.

Now with $n\geq4$ we use~\eqref{eq:rootdynamics} to see that if there were $3\leq m<n$ for which $\zeta_m(\tlambda)=-2$ then both $\zeta_{m+1}(\tlambda)=2$ and $\zeta_{m+1}(\tlambda)=2$, so that $\zeta_{m+k}(\tlambda)=2$ for all $k\geq1$ in contradiction to $\zeta_n(\tlambda)=0$.  Combining this with our initial cases, $\zeta_m(\tlambda)\neq-2$ for all $m<n$.

Finally, if there were an $m$ with $4\leq m<n$ and $\zeta_m(\tlambda)=2$ then taking the smallest such $m$ and applying~\eqref{eq:rootdynamics} would give $\zeta_{m-2}(\tlambda)=2$ because the other two roots are $\zeta_{m-1}(\tlambda)=-2$ and $\zeta_{m-2}(\tlambda)=-2$, both of which have been excluded. Since $m\geq4$ was minimal we have $m=4$ or $m=5$, but then either $\zeta_2(\tlambda)=2$ or $\zeta_3(\tlambda)=2$, both of which we excluded in our initial cases.
\end{proof}

\begin{theorem}\label{thm:valuesandgaps}
If $\zeta_n(\tlambda)=0$ then there is $\delta>0$ so that either the interval $I_-=(\tlambda-\delta,\tlambda)$ or the interval $I_+=(\tlambda,\tlambda+\delta)$ is a gap, meaning it does not intersect the Dirichlet Laplacian spectrum of $G_m$ for any $m\in\mathbb{N}$.  By contrast, there is a sequence $k_j\to\infty$ such that the other interval contains a sequence of Dirichlet eigenvalues  for the Laplacian on $G_{n+2k_j}$ that accumulate at $\tlambda$. 
\end{theorem}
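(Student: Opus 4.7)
The plan is to iterate the recursion of Corollary~\ref{cor:rootdynamics} starting at the simple root $\tlambda$ of $\gamma_n$, for a handful of steps. This will reveal the local behaviour of $\zeta_{n+1},\dotsc,\zeta_{n+4}$ near $\tlambda$, from which the theorem will follow by combining Lemma~\ref{lem:easyescape} on one side of $\tlambda$ with the doubling argument inside the proof of Lemma~\ref{lem:pointsononeside} on the other.

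Concretely, Proposition~\ref{lem:efnpropsforgamman} ensures $\tlambda$ is a simple root, so $\zeta_n(\lambda)=\alpha(\lambda-\tlambda)+O((\lambda-\tlambda)^2)$ with $\alpha\neq0$, while Lemma~\ref{lem:nopm2beforeroot} yields $\beta:=2\bigl(\zeta_{n-1}(\tlambda)^2-4\bigr)\neq0$. Substituting into the recursion, $\zeta_{n+1}$ acquires a simple pole $\zeta_{n+1}(\lambda)\sim\beta/(\alpha(\lambda-\tlambda))$, and one further iteration gives $\zeta_{n+2}(\tlambda)=-2$ with
\begin{equation*}
\zeta_{n+2}(\lambda)+2 \sim -\tfrac{8\alpha}{\beta}(\lambda-\tlambda).
\end{equation*}
Two more iterations produce a simple pole for $\zeta_{n+3}$ and $\zeta_{n+4}(\tlambda)=2$ with
\begin{equation*}
\zeta_{n+4}(\lambda)-2 \sim \tfrac{32\alpha}{\beta}(\lambda-\tlambda).
\end{equation*}
Since these leading coefficients have opposite signs, exactly one side of $\tlambda$ (call it $I_+$) satisfies $\zeta_{n+2}<-2$ and $\zeta_{n+4}>2$, while the other side $I_-$ satisfies $\zeta_{n+2}\in(-2,0)$ and $\zeta_{n+4}\in(0,2)$.

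On $I_+$, shrinking $\delta$ ensures $|\zeta_{n+1}(\lambda)|>2$ (the pole of $\zeta_{n+1}$ is nearby) and $|\zeta_{n+2}(\lambda)|>2$ for every $\lambda\in I_+$, so Lemma~\ref{lem:easyescape} applied pointwise with index $n+3$ gives $\zeta_m(\lambda)>2$, and hence $\gamma_m(\lambda)\neq0$, for every $m\geq n+3$. Shrinking $\delta$ further excludes the finitely many roots of $\gamma_k$ with $k\leq n+2$ other than $\tlambda$ itself. Since the Dirichlet spectrum of $G_m$ is $\bigcup_{k\leq m}\{\gamma_k=0\}$ by Theorem~\ref{thm:factorizationofcn}, it follows that $I_+$ contains no Dirichlet eigenvalue of any $G_m$, which is the gap claim.

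On $I_-$ the bounds $|\zeta_{n+3}|>2$ and $\zeta_{n+4}\in(0,2)$ hold throughout (for $\delta'$ small), and by continuity $\lambda\mapsto\zeta_{n+4}(\lambda)$ maps each sub-interval $(\tlambda-\delta',\tlambda)\subset I_-$ onto an interval of the form $(2-c\delta',2)$ for some $c>0$. Applying the doubling mechanism from the proof of Lemma~\ref{lem:pointsononeside}, with starting data $|\zeta_{n+3}|>2$ and $\zeta_{n+4}\in(2-c\delta',2)$, shows that after $k$ two-step iterations the image $\{\zeta_{n+4+2k}(\lambda):\lambda\in(\tlambda-\delta',\tlambda)\}$ covers $(2-2^kc\delta',2)$; at the first $k$ for which $2^kc\delta'>2$ this image crosses zero and we obtain a root of $\gamma_{n+4+2k}$ in $(\tlambda-\delta',\tlambda)$, i.e., a Dirichlet eigenvalue of $G_{n+2(k+2)}$. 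Sending $\delta'\to0$ forces $k\to\infty$, since each $\gamma_{n+2m}$ has only finitely many roots for fixed $m$, and the resulting indices produce the sequence $k_j$ of the theorem. The main obstacle will be the careful sign-tracking in the Taylor expansions: without the observation that the leading coefficients in $\zeta_{n+2}+2$ and $\zeta_{n+4}-2$ carry opposite signs, one cannot unambiguously identify the gap side and the accumulation side.
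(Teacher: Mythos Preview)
Your proof is correct and follows essentially the same approach as the paper: linearize $\zeta_{n+1},\dotsc,\zeta_{n+4}$ near $\tlambda$, use Lemma~\ref{lem:easyescape} on the side where $\zeta_{n+2}<-2$, and use Lemma~\ref{lem:pointsononeside} on the side where $\zeta_{n+4}\in(2-\delta'',2)$; your normalization $\beta=2(\zeta_{n-1}(\tlambda)^2-4)$ differs from the paper's by a factor of $2$ but the coefficients agree after conversion. One small technical point you omit: the recursion of Corollary~\ref{cor:rootdynamics} only computes $\zeta_m$ for $m\geq4$, so obtaining $\zeta_{n+1}$ from the recursion requires $n\geq3$; for $n=1,2$ the paper verifies the linearizations of $\zeta_{n+1}$ and $\zeta_{n+2}$ directly from the initial data~\eqref{eq:zetainitdata}, after which the rest of the argument proceeds identically.
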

\begin{proof}
Recall from Proposition~\ref{prop:efnsGm} zeros of $\gamma_n$ and thus of $\zeta_n$ are simple.  The definition of $\zeta_n=\gamma_n/\zeta_n$ ensures its zeros are also distinct from the zeros and poles of $\zeta_m$ $m<n$, so we may initially take $\delta$ so that $\zeta_n$ is positive on one of $I_-$, $I_+$ and negative on the other, and such that each $\zeta_m$, $m<n$ has constant sign on $I=(\tlambda-\delta,\tlambda+\delta)$.

Lemma~\ref{lem:nopm2beforeroot} ensures $\zeta_{n-2}(\tlambda)^2-4\neq0$, so~\eqref{eq:rootdynamics} and simplicity of the root of $\zeta_n$ at $\tlambda$ ensure $\zeta_{n+1}$ has a simple pole at $\tlambda$ if $n\geq3$.  For $n=1,2$ the same fact can be verified directly from the inital data~\eqref{eq:zetainitdata} for the dynamics.   In particular, $|\zeta_{n+1}(\lambda)|\to\infty$ as $\lambda\to\tlambda$. By reducing $\delta$, if necessary, we may assume $|\zeta_{n+1}(\lambda)|>2$ on $I\setminus\{\tlambda\}$.

We use the preceding to linearly approximate $\zeta_{n+j}$ for $j=2,3$.  Since~\eqref{eq:rootdynamics} is a dynamical system on rational functions we can linearize around a pole, but in order to use this dynamics we need $n\geq3$.  Temporarily write $t=\lambda-\tlambda$ and use $\simeq$ for equality up to $O(t^2)$ so simplicity of the root of $\zeta_n$ at $\tlambda$ implies there is a non-zero $\alpha$ with $\zeta_n(\lambda)\simeq \alpha t$ and the fact that $\zeta_{n-1}^2\neq4$ gives $\beta,\beta'$ with $\beta\neq0$ so $(\zeta_{n-1}^2-4)\simeq\beta+\beta't$.  Then we compute from~\eqref{eq:rootdynamics}:
\begin{align}
	\frac2{\zeta_{n+1}}
	&= \frac{2\zeta_n}{2\zeta_n+(\zeta_n+2)(\zeta_{n-1}^2-4)} \notag\\
	&\simeq \frac{2\alpha t}{2\alpha t + (\alpha t+2)(\beta+\beta't)} \simeq \frac\alpha\beta t, \label{eqn:zeta_n+1}
	\end{align}
and therefore
\begin{align}
	\zeta_{n+2}
	&=2+ \Bigl(1+\frac2{\zeta_{n+1}}\Bigr)(\zeta_n^2-4) \notag\\
	&\simeq 2+ \Bigl( 1+\frac\alpha\beta t\Bigr) (\alpha^2t^2-4)\simeq -2 -\frac{4\alpha}\beta t. \label{eqn:zeta_n+2}
	\end{align}
The preceding is valid for $n\geq3$, but if $n=2$ then $\tlambda\in\{3-\sqrt{5},3+\sqrt{5}\}$ and a  linearization of $2\zeta_3^{-1}$ like~\eqref{eqn:zeta_n+1} is readily computed from~\eqref{eq:zetainitdata} while the argument of~\eqref{eqn:zeta_n+2} is valid for $\zeta_4$. Moreover, if $n=1$ then $\tlambda=2$ and linearizations for both $2\zeta_2^{-1}$ and $\zeta_3$ can again be computed directly from~\eqref{eq:zetainitdata}. Thus~\eqref{eqn:zeta_n+1} and~\eqref{eqn:zeta_n+2} are valid for all $n\geq1$.
	
Since $\alpha$ and $\beta$ are non-zero, the linearizations show that $\zeta_{n+2}(\lambda)<-2$ for $t$ in an interval on the side of $0$ where $\frac\alpha\beta t>0$, meaning that $\lambda$ is on the corresponding side of $\tlambda$.  By reducing $\delta$, if necessary, we conclude $\zeta_{n+2}<-2$ on one of $I_+$ or $I_-$. At this point we have both $|\zeta_{n+1}(\lambda)|>2$ and $|\zeta_{n+2}(\lambda)|>2$ on exactly one of the two intervals $I_-$ or $I_+$, and since $n+1\geq2$ we can apply Lemma~\ref{lem:easyescape} to find that this interval does not contain zeros of $\zeta_m$ for any $m>n$.  Since it was also selected so as to not contain zeros of $\zeta_m$ for $m\leq n$ we have proved that one of these intervals is a gap.

Turning to the other interval, where $\frac\alpha\beta t<0$, we will need two more iterations of the linearized dynamics. The index $n$ is now large enough that we need only apply~\eqref{eq:rootdynamics} to~\eqref{eqn:zeta_n+1} and~\eqref{eqn:zeta_n+2}, which gives:
\begin{align}
	\zeta_{n+3}
	&= 2+ \frac{(\zeta_{n+2}+2)}{\zeta_{n+2}} (\zeta_{n+1}^2-4) \notag\\
	&\simeq 2 + \frac{ -\frac{4\alpha}{\beta} t}{(-2-\frac{4\alpha}{\beta} t)} \Bigl( \frac{4}{(\frac\alpha\beta t)^2} -4 \Bigr) \notag\\
	&\simeq 2+ \frac8{(1+\frac{2\alpha}\beta t) (\frac\alpha\beta t)}, \label{eq:zeta_n+3}
	\end{align}
so that $2\zeta_{n+3}^{-1}\simeq \frac{3\alpha}{16\beta}t$.  A second application gives
\begin{align}
	\zeta_{n+4}
	&=2+ \Bigl(1+\frac2{\zeta_{n+3}}\Bigr)(\zeta_{n+2}^2-4) \notag\\
	&\simeq 2+ \Bigl( 1+\frac{3\alpha}{16\beta} t\Bigr) \Bigl( \bigl(-2-\frac{4\alpha}\beta t\bigr)^2- 4\Bigr) \notag\\
	&\simeq 2+ \frac{16\alpha}\beta t. \label{eq:zeta_n+4}
	\end{align}
Now suppose we are given $0<\delta'<\delta$. By reducing $\delta'$ if necessary we find from~\eqref{eq:zeta_n+4} that the map $\zeta_n\mapsto\zeta_{n+4}$ takes the side of the interval $|\lambda-\tlambda|=|t|<\delta'$ that lies in the non-gap interval, meaning $\frac\alpha\beta t <0$,  to an interval of the form $(2-\delta'',2)\subset(0,2)$.  At the same time, and again reducing $\delta'$ if necessary, we can assume from~\eqref{eq:zeta_n+3} that $|\zeta_{n+3}|>2$ on this interval.  But then Lemma~\ref{lem:pointsononeside} is applicable to $\zeta_{n+3}$ and $\zeta_{n+4}$ and we find there is $k$ so that $\zeta_{n+4+2k}$ has a root in the interval. Since this argument was applicable to any $0<\delta'<\delta$ we conclude that the roots of the rational functions $\zeta_{n+2k}$ accumulate to $\tlambda$ as $k\to\infty$ within the non-gap interval.
\end{proof}

\begin{corollary}\label{cor:KNSCantor}
The support of the KNS spectrum is a Cantor set. In particular it is uncountable and has countably many gaps.
\end{corollary}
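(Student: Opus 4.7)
The plan is to verify the three defining topological properties of a Cantor set for the support $K$ of the KNS spectral measure: nonempty, compact, perfect, and totally disconnected in $\mathbb{R}$. The starting point is Corollary~\ref{cor:KNSspectfromDirNeum}, which identifies $K$ with the closure of $S:=\bigcup_{n\geq 1}\text{spec}_{\text{Dir}}(L_n)$, and equivalently with the closure of the set of roots of the polynomials $\gamma_n$ as $n$ ranges over $\mathbb{N}$. Compactness is immediate: $K$ is closed by construction, and the graphs $G_n$ have uniformly bounded vertex degree so every $\text{spec}(L_n)$ lies in a fixed bounded interval of $\mathbb{R}_{\geq 0}$.

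Next I would establish that $K$ is totally disconnected by showing it contains no interval. Suppose for contradiction that an open interval $(a,b)$ were contained in $K$. Since $S$ is dense in $K$, there is a root $\tlambda$ of some $\gamma_n$ inside $(a,b)$. Theorem~\ref{thm:valuesandgaps} produces a $\delta>0$ and a one-sided neighborhood $I_\pm$ of $\tlambda$ of length $\delta$ that is disjoint from every $\text{spec}_{\text{Dir}}(L_m)$. Shrinking $\delta$ if needed so that $I_\pm\subset(a,b)$, this gives an open subinterval of $(a,b)$ that is disjoint from $S$; since the interval is open it is also disjoint from $\overline{S}=K$, contradicting $(a,b)\subset K$.

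To show $K$ has no isolated points, I would again apply Theorem~\ref{thm:valuesandgaps}: any $\tlambda\in S$ is a limit of Dirichlet eigenvalues of $G_{n+2k_j}$ that lie in $S\subset K$, so $\tlambda$ is not isolated in $K$. A general point of $K\setminus S$ is already a limit of points of $S$, hence not isolated. Combined with the previous paragraph and compactness, this shows $K$ is a nonempty compact perfect totally disconnected subset of $\mathbb{R}$, which is the standard characterization of a Cantor set.

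Finally, the two additional claims are straightforward consequences: uncountability of $K$ follows from the fact that a nonempty perfect subset of a complete metric space is uncountable, and countability of the gaps follows because the complement of a compact subset of $\mathbb{R}$ has at most countably many connected components, each of which is an open interval. The main obstacle will be setting up the topological argument precisely so that one really uses only the existence of arbitrarily small one-sided gaps and one-sided accumulation points provided by Theorem~\ref{thm:valuesandgaps}; once that is done, the rest is a short exercise in point-set topology, so I expect no substantive technical difficulty beyond quoting the theorem correctly.
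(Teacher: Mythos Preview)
Your proposal is correct and follows essentially the same route as the paper: both arguments identify the support with the closure of the Dirichlet spectra via Corollary~\ref{cor:KNSspectfromDirNeum}, then use the one-sided gap from Theorem~\ref{thm:valuesandgaps} to rule out intervals and the one-sided accumulation from the same theorem to rule out isolated points. Your version is slightly more detailed in treating compactness, points of $K\setminus S$, and the final uncountability/countable-gaps claims, but there is no substantive difference in strategy.
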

\begin{proof}
Recall from Corollary~\ref{cor:KNSspectfromDirNeum} that the support of the KNS spectral measure is the closure of the union of the set of Dirichlet  Laplacian eigenvalues on $G_n$. For $\tlambda$ a Dirichlet eigenvalue there is a least $n$ for which it is such, and the definition of $\zeta_n$ ensures $\zeta_n(\tlambda)=0$. But then Theorem~\ref{thm:valuesandgaps} provides a sequence $k_j$ and roots of $\zeta_{n+2k_j}$ that accumulate at $\tlambda$.  This shows each Dirichlet eigenvalue  for $G_n$ is a limit point of such eigenvalues, and therefore the support of the KNS spectrum is perfect.

If there was an interval in  the support of the KNS spectrum then by Corollary~\ref{cor:KNSspectfromDirNeum} it would contain an interior point $\tilde{\lambda}$ from the Dirichlet spectrum on some $G_n$. By assuming $n$ is the first index for which the eigenvalue $\tilde{\lambda}$ occurs we have $\zeta_n(\tilde{\lambda})=0$, so Theorem~\ref{thm:valuesandgaps} provides a gap on one side of $\tilde{\lambda}$ and we have a contradiction.  Accordingly the connected components of the support of the KNS spectrum  are points and the set is totally disconnected.

We have shown that the support of the KNS spectrum  is perfect and totally disconnected, so it is a Cantor set.
\end{proof}

The construction in the proof of Theorem~\ref{thm:valuesandgaps} allows us to find specific gaps by taking preimages of regions that the theorem ensures will escape under the dynamics~\eqref{eq:rootdynamics} and will therefore not contain eigenvalues. One can visualize these dynamics using graphs in $\mathbb{R}^2$, with coordinates $x=\zeta_2$ and $y=\zeta_3$.  We are interested only in those values that are given by~\eqref{eq:zetainitdata}, which are shown as thick curves on the graphs in Figure~\ref{fig:zetadynamics}.  The graph also shows the preimages of the escape region from Theorem~\ref{thm:valuesandgaps} for small $n$.  More precisely, these sets are where both $|\zeta_{n-2}|>2$ and $|\zeta_{n-1}|>2$.
Note that the intersections of the shaded regions with the thick curves correspond to intervals of $\lambda\in\mathbb{R}$ which cannot contain spectral values for any larger $n$, and are therefore gaps in the spectrum of $\Delta_n$ for all $n$. Using~\eqref{eq:rootdynamics} it is fairly easy to determine the endpoints of the intervals for any specified $n$.   If it were possible to give good estimates for the sizes of these intervals one could resolve the following question.
\begin{problem}
Determine whether the closure of the union of the spectra of the $L_n$ has zero Lebesgue measure or give estimates for its Hausdorff dimension.
\end{problem}

\begin{figure}
\centering
\includegraphics[width=3.5cm]{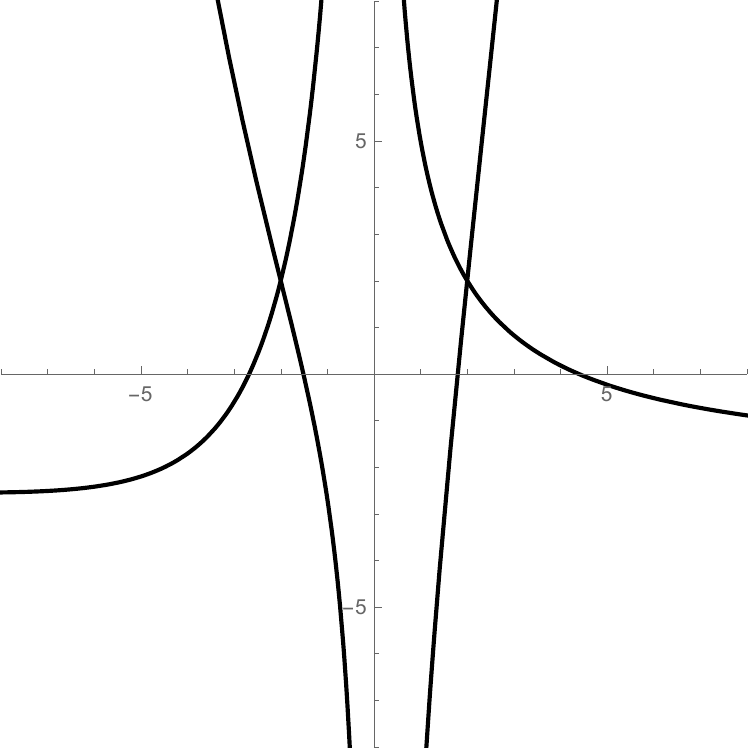} \hspace{.5cm}
\includegraphics[width=3.5cm]{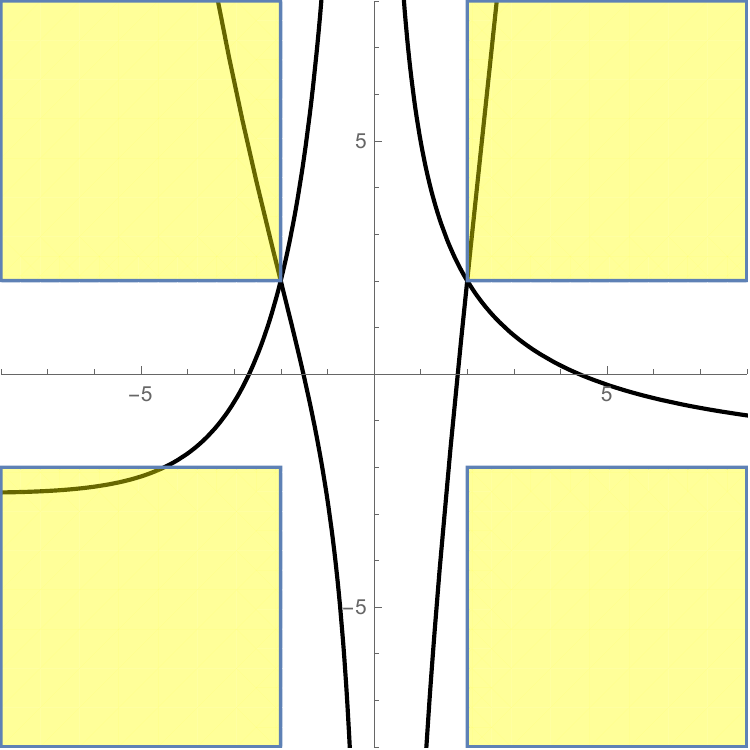} \hspace{.5cm}
\includegraphics[width=3.5cm]{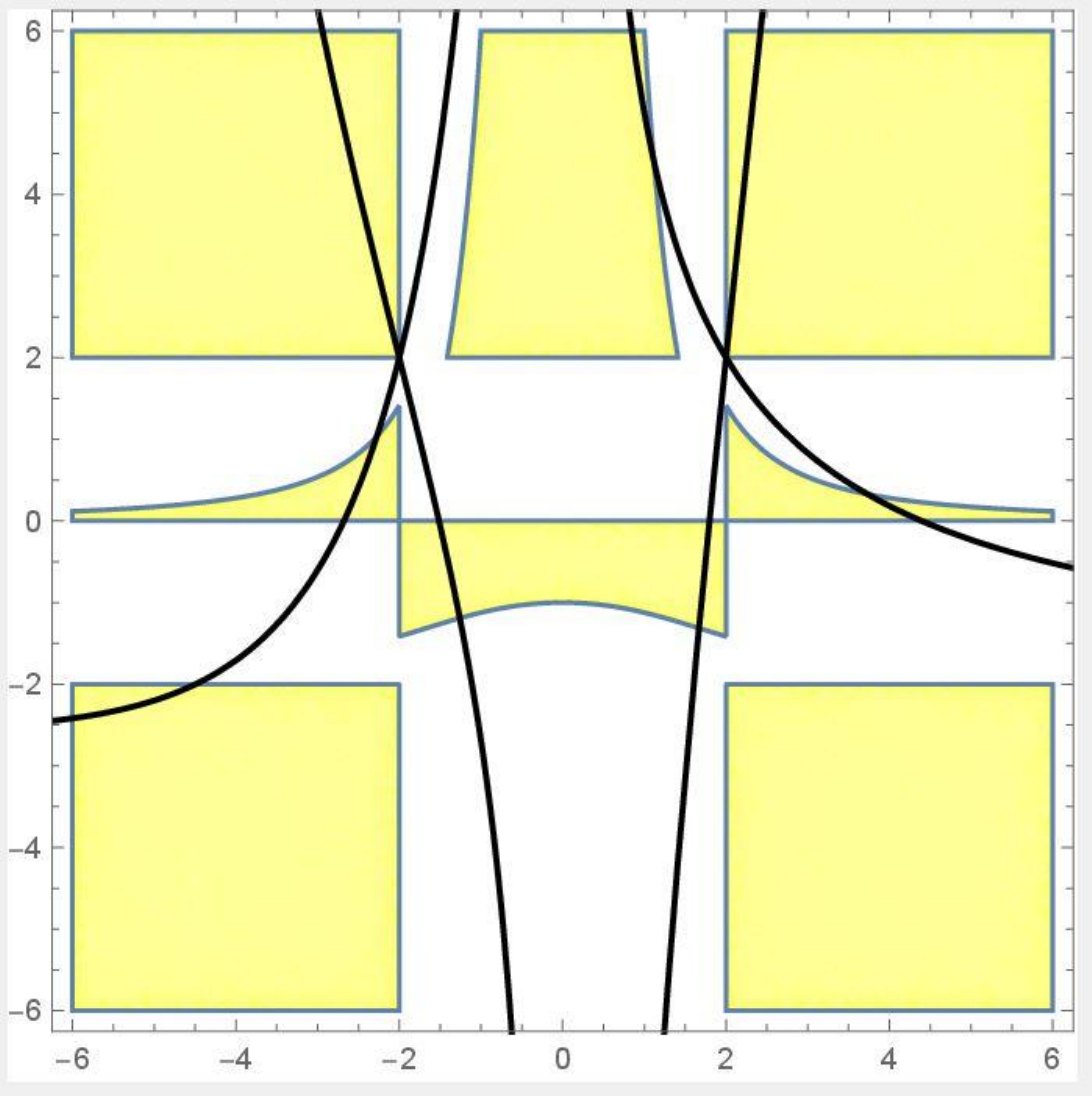} 
\caption{Graphs of $(\zeta_1(\lambda),\zeta_2(\lambda))$ (left) superimposed on escape regions $|\zeta_2|>2,|\zeta_3|>2$ (middle) and $|\zeta_4|>2,|\zeta_5|>2$ (right).}\label{fig:zetadynamics}
\end{figure}

\section{A generic set of blowups of the graphs $G_n$ with Pure Point Spectrum}\label{sec-pp}
Recall from Definition~\ref{def:Gnblowups} that a blow-up $G_\infty$ is the direct limit of a system $(G_{k_n},\iota_{k_n})$ with canonical graph morphisms $\tilde{\iota}_{k_n}:G_{k_n}\to G_\infty$ and the Laplacian $L_\infty$ on $G_\infty$ (from Definition~\ref{defn:Lapinfty}) at $\tilde{\iota}_{k_n}(x)$ for a non-boundary point $x\in G_{k_n}$ coincides with $L_{k_n}$ on $\tilde{\iota}_{k_n}(G_{k_n})$, as in~\eqref{eq:LinftyisLkn}.  We will write $\tilde{G}_{k_n}=\tilde{\iota}_{k_n}(G_{k_n})$ for the canonical copy of $G_{k_n}$ in $G_\infty$.

For the following lemma, note that $\tilde{\iota}_{k_n}$ can fail to be injective at the boundary points of $G_{k_n}$, but  $f\circ\tilde{\iota}_{k_n}^{-1}$ is well-defined for a Dirichlet eigenfunction $f$ because $f=0$ at the boundary points.
\begin{lemma}\label{lem:DNefnsextendtoGinfty}
If $f$ is a Dirichlet-Neumann eigenfunction of $L_{k_n}$ on $G_{k_n}$ then setting $F=f\circ\tilde{\iota}_{k_n}^{-1}$ on $\tilde{G}_{k_n}$ and zero elsewhere defines an eigenfunction of $L_\infty$ with the same eigenvalue and infinite multiplicity.
\end{lemma}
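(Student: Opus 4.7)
The plan is to check the eigenfunction identity $L_\infty F = \lambda F$ pointwise, partitioning the vertex set of $G_\infty$ according to how each vertex sits relative to $\tilde{G}_{k_n}$, and then produce infinitely many linearly independent such eigenfunctions by exploiting disjoint embedded copies of $G_{k_n}$ inside $G_\infty$.

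For the first claim I would handle three cases. Interior vertices of $\tilde{G}_{k_n}$ are immediate from~\eqref{eq:LinftyisLkn}: if $x\in G_{k_n}\setminus\partial G_{k_n}$ then $L_\infty F(\tilde{\iota}_{k_n}(x)) = L_{k_n}f(x) = \lambda f(x) = \lambda F(\tilde{\iota}_{k_n}(x))$. For vertices $v\in G_\infty\setminus\tilde{G}_{k_n}$ we have $F(v)=0$, and any neighbor $w$ of $v$ that lies in $\tilde{G}_{k_n}$ must be the image of a point of $\partial G_{k_n}$ (since~\eqref{eq:LinftyisLkn} tells us that images of non-boundary points have no neighbors outside $\tilde{G}_{k_n}$), and on $\partial G_{k_n}$ the function $f$ vanishes by the Dirichlet condition; hence $L_\infty F(v)=0=\lambda F(v)$. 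The interesting case is $v=\tilde{\iota}_{k_n}(x)$ with $x\in\partial G_{k_n}$. Here $F(v)=f(x)=0$, and expanding the Laplacian as $L_\infty F(v)=-\sum_w c_{vw}F(w)$ the only nonzero contributions come from neighbors that lie in $\tilde{G}_{k_n}$, so $L_\infty F(v)=-\sum_{y\sim x \text{ in } G_{k_n}} c_{xy}f(y)$. Using $f(x)=0$ this equals $L_{k_n}f(x)$, and the Neumann part of the Dirichlet--Neumann hypothesis says $L_{k_n}f(x)=\lambda f(x)=0=\lambda F(v)$.

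For the infinite multiplicity, I would invoke the recursive structure of the blowup. At each step $\iota_{k_m}:G_{k_m}\to G_{k_{m+1}}$ the image embeds $G_{k_m}$ as exactly one of the constituent pieces in the decomposition of $G_{k_{m+1}}$ described in Figure~\ref{add}, and at least one additional (disjoint) piece appears---either a fresh copy of $G_{k_{m+1}-1}$ or a fresh copy of $G_{k_{m+1}-2}$. Iterating this observation, for $m\geq n$ the graph $\tilde{G}_{k_{m+1}}\setminus\tilde{G}_{k_m}$ contains a subgraph isomorphic to some $G_j$ with $j\geq k_n$, which in turn contains isomorphic copies of $G_{k_n}$ (by further iterating the same decomposition). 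Transporting $f$ via the appropriate isomorphism to each of these successively added disjoint copies of $G_{k_n}$, and extending by zero elsewhere, yields eigenfunctions of $L_\infty$ with eigenvalue $\lambda$ by the identical pointwise verification. Since these functions have pairwise disjoint supports, they are linearly independent, and since there are infinitely many such disjoint copies (one arising at each large enough blowup stage), $\lambda$ has infinite multiplicity.

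The only delicate point is the boundary-vertex calculation, where both halves of the Dirichlet--Neumann assumption enter simultaneously: the Dirichlet condition $f(x)=0$ is what allows us to ignore the edges from $\tilde{\iota}_{k_n}(x)$ into $G_\infty\setminus\tilde{G}_{k_n}$ (both endpoints carry value zero), while the Neumann condition $L_{k_n}f(x)=\lambda f(x)$ at the boundary---which is what distinguishes Dirichlet--Neumann eigenfunctions from purely Dirichlet ones---is precisely what is needed to make the sum of incoming contributions from inside $\tilde{G}_{k_n}$ vanish. I would emphasize this in the write-up, since it explains why the lemma cannot be extended to arbitrary Dirichlet eigenfunctions.
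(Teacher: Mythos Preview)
Your proof is correct and follows essentially the same three-case verification as the paper's proof. If anything, your boundary-vertex analysis is slightly more explicit than the paper's: the paper simply asserts that because $F$ vanishes at the outside neighbors one still has $L_\infty F(\tilde{\iota}_{k_n}(x))=L_{k_n}f(x)$, whereas you unpack how the Dirichlet condition $f(x)=0$ kills the degree discrepancy and the Neumann condition then gives $L_{k_n}f(x)=\lambda f(x)=0$. For infinite multiplicity the paper is content with the one-line observation that $G_\infty$ contains infinitely many distinct copies of any $G_m$; your more detailed tracking of fresh pieces at each blowup stage is fine, though note a small off-by-one: when $k_{m+1}-k_m=1$ the new pieces are copies of $G_{k_m-1}$, so the inequality $j\geq k_n$ is guaranteed only for $m>n$, not $m\geq n$---but this is harmless for the conclusion.
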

\begin{proof}
Let $\lambda$ be the eigenvalue of $L_{k_n}$ corresponding to $f$.
Using~\eqref{eq:LinftyisLkn} we have immediately that
\begin{equation}\label{eq:Linftyefneqn}
L_\infty F(\tilde{\iota}_{k_n}(x))=L_{k_n}f(x)=\lambda f(x)=\lambda F(\tilde{\iota}_{k_n}(x))
\end{equation}
if $x$ is not a boundary point of $G_{k_n}$.  If $x$ is a boundary point of $G_{k_n}$ then $\tilde{\iota}_{k_n}(x)$ may have neighbors in $G_\infty$ that are outside $\tilde{G}_{k_n}$, but since $F$ vanishes at these points we still have $L_\infty F(\tilde{\iota}_{k_n}(x))=L_{k_n}f(x)$ and therefore~\eqref{eq:Linftyefneqn} is still valid.  It remains to see $L_\infty F(y)=\lambda F(y)$ for $y\not\in \tilde{G}_{k_n}$, but for such $y$ we have $L_\infty F(y)=0=\lambda F(y)$  because $F$ vanishes at $y$ and its neighbors; some of these neighbors may be in $\tilde{G}_{k_n}$, in which case the fact that $F$ vanishes uses the Dirichlet property of $f$.  The corresponding eigenvalue has infinite multiplicity simply because there are an infinite number of distinct copies of any $G_m$ in $G_\infty$
\end{proof}

The eigenvalues coming from Dirichlet-Neumann eigenfunctions not only have infinite multiplicity. According to  Theorem~\ref{thm:howtogetmostofspect} they support an arbitrarily large proportion of the KNS spectral mass of $L_\infty$.  Even more is true for a certain class of blowups, for which we can show that spectrum is pure-point, with the set of Dirichlet-Neumann eigenfunctions generated at finite scales having dense span in $l^2$. Our proof closely follows an idea used to prove similar results for blow-ups of two-point self-similar graphs and Sierpinski Gaskets~\cite{MT,Teplyaevthesis}.

\begin{definition}\label{def:asymsubsps}
The subspace $l^2_a\subset l^2$ consists of the finitely supported functions that are antisymmetric in the following sense. The function $f\in l^2_a$ if there is $n$ such that $k_n-k_{n-1}=1$, $f$ is supported on $\tilde{\iota}_{k_{n-1}}(G_{k_n-1})$, and $g=f\circ\tilde{\iota}_{k_n}$ on  $G_{k_n}$ satisfies $g=-g\circ\Phi_{k_n}$.  See Figure~\ref{fig:asymsubsps}.
\end{definition}

\begin{figure}
\centering
\begin{tikzpicture}
\draw  (4.5,3) node[circle,fill,inner sep=2pt]{} -- (6,3) node[circle,fill,inner sep=2pt](300a){} --  (7.5,3) node[circle,fill,inner sep=2pt](3u){} --  (9,3) node[circle,fill,inner sep=2pt](300b){} --(10.5,3) node[circle,fill,inner sep=2pt]{};
\path[-, every loop/.style= {looseness=10, distance=20, in=300,out=240}]   (3u)  --   (7.5,2) node[circle,fill,inner sep=2pt](311){} -- (7.5,1) node[circle,fill,inner sep=2pt](3101){} edge [ultra thick, loop below] node {}(7.5,0)  ;
\path[every loop/.style= {looseness=10, distance=20, in=300,out=240}]   (300a) edge [loop below] node {} ()  (300b) edge [loop below] node {} ();
\draw[ultra thick] (3u) edge[bend right] node[left]{} (311) edge[ bend left] node[right]{} (311);
\draw[ultra thick] (311) edge[bend right] node[left]{$-h$} (3101) edge[ bend left] node[right]{$h$} (3101);
\end{tikzpicture}
\caption{A function from $l^2_a$ is supported and antisymmetric on a copy of $G_{k_{n-1}}$ in $G_{k_n}$ with $k_n-k_{n-1}=1$.}\label{fig:asymsubsps}
\end{figure}
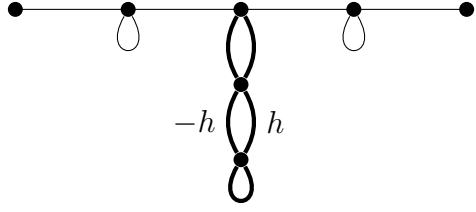

\begin{lemma}\label{lem:l2ainvariance}
The space $l^2_a$ is  invariant under $L_\infty$. Any eigenfunction of the restriction of  $L_\infty$ to $l^2_a$ is also an eigenfunction of $L_\infty$ and the corresponding eigenvalue has infinite multiplicity.  Moreover  $l^2_a$ is contained in the span of the finitely supported eigenfunctions of $L_\infty$.
\end{lemma}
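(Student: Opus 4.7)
The plan is to establish the three assertions in order, with Lemma~\ref{lem:DNefnsextendtoGinfty} as the main engine. Fix $f \in l^2_a$ with associated index $n$ (so $k_n - k_{n-1} = 1$), and set $g = f \circ \tilde{\iota}_{k_n}$. By hypothesis $g$ is supported on the ``large'' $G_{k_n-1} = G_{k_{n-1}}$ subgraph of $G_{k_n}$ and satisfies $g = -g \circ \Phi_{k_n}$, so $g$ vanishes at the gluing vertex $u_n$ (which is fixed by $\Phi_{k_n}$) and throughout the two $G_{k_n-2}$ subgraphs (which are swapped by $\Phi_{k_n}$). In particular $g$ vanishes at the boundary of $G_{k_n}$ and at the unique $G_{k_n}$-neighbors of that boundary, since both lie inside the $G_{k_n-2}$ pieces.

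For invariance, note that $L_\infty f$ vanishes outside $\tilde{\iota}_{k_n}(G_{k_n})$ because both $f$ and its $G_\infty$-neighbors are zero there; meanwhile, at interior points of $G_{k_n}$, \eqref{eq:LinftyisLkn} gives $L_\infty f \circ \tilde{\iota}_{k_n} = L_{k_n} g$, which vanishes on the $G_{k_n-2}$ pieces because $g$ and its $G_{k_n}$-neighbors do. Since $\Phi_{k_n}$ is a graph isomorphism commuting with $L_{k_n}$, the function $L_{k_n} g$ inherits antisymmetry from $g$, so $L_\infty f$ is again of the form specified in Definition~\ref{def:asymsubsps}, and invariance of the linear span $l^2_a$ follows.

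For the infinite multiplicity claim, the invariance immediately implies any eigenfunction of $L_\infty|_{l^2_a}$ is an eigenfunction of $L_\infty$. If $L_\infty f = \lambda f$ for $f$ as above, then $L_{k_n} g = \lambda g$ on $G_{k_n}$, and the vanishing observations above show that $g$ is a Dirichlet-Neumann eigenfunction of $L_{k_n}$ with eigenvalue $\lambda$ (Dirichlet because $g$ vanishes at the boundary; Neumann because the eigenvalue equation at each boundary vertex degenerates to $0 = 0$ given that the boundary values and their neighbors all vanish). Lemma~\ref{lem:DNefnsextendtoGinfty} then produces infinitely many independent $L_\infty$-eigenfunctions at $\lambda$, one on each of the infinitely many canonical copies of $G_{k_n}$ in $G_\infty$. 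For the spanning claim, for each qualifying $n$ let $V_n$ denote the finite-dimensional subspace of $l^2_a$ consisting of functions associated to level $n$; invariance restricts to $V_n$, and since $L_\infty$ is self-adjoint on $l^2$ with respect to the counting-measure inner product, $L_\infty|_{V_n}$ admits an orthonormal basis of eigenfunctions, each finitely supported. The union of these bases over qualifying $n$ spans $l^2_a$ by construction.

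The principal obstacle is the combinatorial step at the outset, which requires careful unpacking of the identifications in Definition~\ref{def:Gnblowups} to confirm that antisymmetry together with the support assumption forces $g$ to vanish throughout the entire ``outer'' $G_{k_n-2}$ subgraphs, rather than just at $u_n$, so that the Dirichlet and Neumann conditions hold automatically; once this is in place, the remaining assertions reduce to finite-dimensional spectral decompositions and a direct application of Lemma~\ref{lem:DNefnsextendtoGinfty}.
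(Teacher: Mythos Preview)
Your proof is correct and follows essentially the same route as the paper's. The paper invokes parts~(2) and~(3) of Proposition~\ref{prop:reflectiongivesnewefn} to conclude that $g$ is a Dirichlet--Neumann eigenfunction on $G_{k_n}$, whereas you verify this directly from the vanishing of $g$ on the $G_{k_n-2}$ pieces; both arguments amount to the same computation at the gluing vertex and at the outer boundary.

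One remark: the ``principal obstacle'' you flag in the final paragraph is not actually an obstacle. Definition~\ref{def:asymsubsps} already requires $f$ to be supported on $\tilde{\iota}_{k_{n-1}}(G_{k_{n-1}})$, which by the description of $\iota_{k_{n-1}}$ in Definition~\ref{def:Gnblowups} (the case $k_n-k_{n-1}=1$, where both boundary points are sent to a single vertex) is precisely the decoration copy of $G_{k_n-1}$ in $G_{k_n}$. So $g$ vanishes on the two $G_{k_n-2}$ pieces by hypothesis, not as a consequence of antisymmetry; antisymmetry is only needed to force $g(u)=0$ and to make the Laplacian cancel at $u$.
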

\begin{proof}
The invariance is evident from the fact that $L_{k_n}$ is symmetric under $\Phi_{k_n}$ for each $n$ and~\eqref{eq:LinftyisLkn}.  Suppose $f$ is an eigenfunction of the restriction of $L_\infty$ to $l^2_a$.  Then there is $n$ as in Defintion~\ref{def:asymsubsps}, meaning $g=f\circ\tilde{\iota}_{k_n}$ satisfies $g=-g\circ\Phi_{k_n}$ and $g$ is supported on the copy of $G_{k_n-1}$ in $G_{k_n}$.  It follows from Theorem~\ref{thm:structure of DN efns} that $g$ is a Dirichlet-Neumann eigenfunction on $G_{k_n}$, and applying Lemma~\ref{lem:DNefnsextendtoGinfty} shows $f$ is an eigenfunction of $L_\infty$ and the eigenvalue has infinite multiplicity.

Now any function in $l^2_a$ has the structure described in Definition~\ref{def:asymsubsps} and is therefore in the span of the Dirichlet-Neumann eigenfunctions of $L_{k_n}$ for the $n$ given in that definition, and as was just mentioned,  Lemma~\ref{lem:DNefnsextendtoGinfty} provides that these extend to $G_\infty$ by zero to give finitely supported eigenfunctions of $L_\infty$.
\end{proof}

\begin{theorem}\label{thm:pureptspect}
If the blowup $(G_{k_n},\iota_{k_n})$ is such that both $k_{n+1}-k_n=1$ and $k_{n+1}-k_n=2$ occur for infinitely many $n$ then the antisymmetric subspace $l^2_a$ is dense in $l^2$.  Hence  there is an eigenbasis of finitely-supported antisymmetric eigenfunctions and the spectrum of $L_\infty$ is pure point.
\end{theorem}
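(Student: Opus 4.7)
The plan is to reduce the theorem to density of $l^2_a$ in $l^2$: once this holds, Lemma~\ref{lem:l2ainvariance} immediately implies that the span of finitely-supported antisymmetric eigenfunctions of $L_\infty$ is dense, so $L_\infty$ has pure-point spectrum and admits an orthonormal eigenbasis obtained by orthonormalizing within each eigenspace while staying inside $l^2_a$.

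To establish the density I would compute the orthogonal complement $(l^2_a)^\perp$. A direct inner-product calculation using the definition of $l^2_a$ yields the characterization: $g\in(l^2_a)^\perp$ if and only if, for every $n$ with $k_n-k_{n-1}=1$, the restriction $g|_{\tilde{G}_{k_{n-1}}}$ is symmetric under the involution on $\tilde{G}_{k_{n-1}}$ inherited from $\Phi_{k_n}$ (which on the decoration copy of $G_{k_n-1}$ inside $G_{k_n}$ is just $\Phi_{k_{n-1}}$). Assume for contradiction that $g\in(l^2_a)^\perp$ is nonzero and pick $v_0$ with $g(v_0)\ne 0$.

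The key combinatorial ingredient is that, since both $+1$ and $+2$ steps occur infinitely often, the pattern ``a $+2$ step immediately followed by a $+1$ step'' must recur at infinitely many indices $m_1<m_2<\cdots$: otherwise, past some index every $+2$ would be followed by $+2$, which forces all steps past that point to be $+2$ and precludes further $+1$ steps. For each such $m_i$ large enough that $v_0\in\tilde{G}_{k_{m_i}}$, the symmetry of $g|_{\tilde{G}_{k_{m_i+1}}}$ under $\Phi_{k_{m_i+1}}$ forces $g(v_i)=g(v_0)$, where $v_i:=\Phi_{k_{m_i+1}}(v_0)$. Because the $+2$ embedding places $\tilde{G}_{k_{m_i}}$ as one of the two $G_{k_{m_i}}$-copies in $\tilde{G}_{k_{m_i+1}}$ and $\Phi_{k_{m_i+1}}$ swaps these two copies, $v_i$ lies in $\tilde{G}_{k_{m_i+1}}\setminus\tilde{G}_{k_{m_i}}$ unless $v_0$ equals the gluing vertex $u_{m_i+1}$ of $\tilde{G}_{k_{m_i+1}}$. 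The pattern also forces $m_{i+1}\ge m_i+2$, so for $i<j$ we have $\tilde{G}_{k_{m_j}}\supset\tilde{G}_{k_{m_i+1}}\ni v_i$ while $v_j\notin\tilde{G}_{k_{m_j}}$, giving $v_i\ne v_j$.

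The main obstacle is handling the exceptional case where $v_0=u_{m_i+1}$ and the reflection yields no new point. I would dispatch this by showing that the gluing vertices produced at distinct $+2$ steps are pairwise distinct vertices of $G_\infty$: each $u_{m+1}$ is the image of a specific interior vertex of $G_{k_{m+1}}$ (``$001$'' in the address convention of Definition~\ref{def:Gnblowups}), and under all subsequent appendings this address retains a nonzero character in the third position, distinguishing it from the boundary images of the form $0^{k-1}x$ or $0^{k-1}y$ that serve as gluing vertices at later $+2$ steps. Hence $v_0$ coincides with at most one $u_{m_i+1}$, and discarding that single index leaves infinitely many pairwise distinct $v_i$ with $|g(v_i)|=|g(v_0)|>0$, contradicting $g\in l^2$.
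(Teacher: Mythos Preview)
Your argument is correct and gives a genuinely different proof from the paper's.

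The paper proceeds by a norm estimate: given $f\perp l^2_a$, it takes $n$ large with $\|P_n''f\|_2\ge\tfrac23\|f\|_2$, then chooses $m>n$ with $k_{m-1}-k_{m-2}=1$ and $k_m-k_{m-1}=2$ (the pattern $+1$ then $+2$, opposite to yours), antisymmetrizes $P_n'f$ under $\Phi_{k_m}$, and uses orthogonality of $f$ to this antisymmetrization together with Cauchy--Schwarz to force $\|f\|_2=0$. Your approach is pointwise rather than norm-based: you read off from the characterization of $(l^2_a)^\perp$ that $g$ must be symmetric on each relevant $\tilde G_{k_{n-1}}$, and then exploit the $+2$-then-$+1$ pattern to reflect a single nonzero value to infinitely many distinct vertices, contradicting square-summability. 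Both arguments hinge on the same geometric fact---that under a $+2$ step the involution $\Phi$ swaps the two $G_{k}$ copies---but your route avoids the inner-product computation entirely and is arguably more transparent.

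One point you should tighten: your justification in Step~5 that the gluing vertices $u_{m_i+1}$ are pairwise distinct via address-tracking is sketchy as written. A cleaner argument is available. For $m<m'$ both indexing $+2$ steps, the vertex $u_{m+1}$ is interior to $G_{k_{m+1}}$, hence its image under the subsequent embeddings is an interior vertex of $G_{k_{m'}}$ (the maps $\iota$ send interior to interior). On the other hand $u_{m'+1}$ is the image under $\iota_{k_{m'}}$ of a \emph{boundary} vertex of $G_{k_{m'}}$. Since $\tilde\iota_{k_{m'}}$ separates interior points of $G_{k_{m'}}$ from images of boundary points, $u_{m+1}\ne u_{m'+1}$.
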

\begin{proof}
Suppose $f\perp l^2_a$.
It will be useful to have some notation for the various subsets, subspaces and functions we encounter.  For fixed $n<m<\infty$ let us write $\iota'_{k_n,k_m}=\iota_{k_{m-1}}\circ\dotsm\circ\iota_{k_n}:G_{k_n}\to G_{k_m}$ and $G_{k_n}'=\iota'_{k_n,k_m}(G_{k_n}\setminus\partial G_{k_n})$ for the image of $G_{k_n}$, less its boundary points, in $G_{k_m}$ and $G_{k_n}''=\tilde{\iota}_{k_n}(G_{k_n}\setminus\partial G_{k_n})$ for the corresponding image in $G_\infty$.   We will write $P_n''f$ for the restriction of $f$ to $G_{k_n}''$, and $P_n'f=P_n''f\circ\tilde{\iota}_{k_m}$ for the corresponding function on $G_{k_m}$.  We frequently use the fact that, under counting measure, the integral of a function supported on $\tilde{G}_{k_n}$ may also be computed on $G_{k_n}$ or $G_{k_m}$.

The argument proceeds as follows. Since $f\in l^2$ we can take $n$ so large that $\|P_n''f\|_2\geq \frac23\|f\|_2$.  Using the hypothesis, we choose $m>n$ so that $k_m-k_{m-1}=1$ and there are $n<n'<n''<m$ with $k_{n'}-k_{n'-1}=1$ and $k_{n''}-k_{n''-1}=2$.  This choice ensures that $P_n''f$ vanishes at the point where  $\tilde{\iota}_{k_m}$ is non-injective, so setting $g =  P_n'f  - P_n'f\circ \Phi_{k_m}$ and $F=g\circ\tilde{\iota}_{k_m}^{-1}$ gives a well-defined function on $\tilde{G}_{k_m}\subset G_\infty$ that is antisymmetric in the sense of Definition~\ref{def:asymsubsps} and hence in $l^2_a$.    From this, and $f\perp l^2_a$, we may compute
\begin{align*}
	0&= \langle f,F\rangle_{l^2} = \bigl\langle f\circ\tilde{\iota}_{k_m}, g \bigr\rangle_{l^2_{k_m}}\\
	&=\langle f\circ\tilde{\iota}_{k_m}, P_n'f \rangle_{l^2_{k_m}} - \langle f\circ\tilde{\iota}_{k_m}, P_n'f \circ \Phi_{k_m}\rangle_{l^2_{k_m}}\\
	&= \langle f, P_n''f \rangle_{l^2} - \langle f\circ\tilde{\iota}_{k_m}\circ\Phi_{k_m}, P_n'f \rangle_{l^2_{k_m}}\\
	&=\|P_n''f\|_{l^2}^2 - \langle f|_{ \tilde{\iota}_{k_m}\circ \Phi_{k_m}(G_{k_n}')} , P_n''f \rangle_{l^2}
	\end{align*}
However our choice of $m$ also ensures that $\Phi_{k_m}(G_{k_n}')$ does not intersect $G_{k_n}'$ and thus $\tilde{\iota}_{k_m}\circ \Phi_{k_m}(G_{k_n}')$ does not intersect $G_{k_n}''$, so the restriction of $f$ to the former set has $l^2$ norm at most $\|f-P_n''f\|_{l^2}\leq\frac13\|f\|_2$.  By the above computation, the Cauchy-Schwartz inequality, and $\|P_n''f\|_{l^2}\geq \frac23\|f\|_{l^2}$ from our choice of $n$, we obtain
\begin{equation*}
	0\geq \|P_n''f\|_{l^2}^2 - \|P_n''f\|_{l^2}\|f-P_n''f\|_{l^2}
	\geq \frac49\|f\|_{l^2}^2 - \frac13 \|f\|_{l^2}^2
	= \frac19 \|f\|_{l^2}^2
	\end{equation*}
so that any $f\perp l^2_a$ is zero and thus $l^2_a$ is dense in $l^2$.  The remaining conclusions come from Lemma~\ref{lem:l2ainvariance}.
\end{proof}

Since the KNS spectrum is the limit of the spectra of the finitely supported eigenfunctions it follows immediately that the KNS spectrum is that of $L_\infty$.  The spectrum of $L_\infty$ is sometimes called the Kesten spectrum.

It is not difficult to use the condition on the sequence $\{k_n\}$ in Theorem~\ref{thm:pureptspect} and the description of the maps $\iota_{k_n}$ in Definition~\ref{def:Gnblowups} to determine the corresponding class of orbital Schreier graphs from  Theorem~\ref{thm:orbitalSareblowups}  for which Theorem~\ref{thm:pureptspect} guarantees the Laplacian spectrum is pure point.

\begin{corollary}
The fractal blowups identified as having pure point spectrum in  Theorem~\ref{thm:pureptspect} are all orbital Schreier graphs with one end.  All orbital Schreier graphs with one end have pure point spectrum with the possible exception of those isomorphic to $\Gamma_{\bar{1}}$.
\end{corollary}
\begin{proof}
Theorem~\ref{thm:pureptspect} applies to blowups for which  the values $1$ and $2$ both occur infinitely often in the sequence $\{k_{n+1}-k_{n}\}$.  When  $k_{n+1}-k_{n}=1$ then $\iota_{k_n}$ appends $1$ to non-boundary points and when $k_{n+1}-k_{n}=2$ it appends either $00$ or $01$.  Now observe that if $n$ and $n'$ are consecutive values such that $k_{n+1}-k_{n}=1=k_{n'+1}-k_{n'}$ then $k_{n+1}$ and $k_{n'+1}$ are of opposite parity; they cannot both be even or both be odd because the portion of the address between $v_{k_{n+1}}$ and $v_{k_{n'+1}}$ is a sequence made from $\{00,01\}$.  It follows that when we write $v=v_1v_2\dotsm$ the set $\{k: v_{2k}=1\}$ is infinite and so is $\{k:v_{2k+1}=1\}$.  What is more, if $n$ and $n'$ are consecutive values as before we see that, unless $n$ is the first such value, it must be that $v_{k_{n-1}}=0=v_{k_{n'-1}}$.  From this we deduce that both sets $\{k: v_{2k}=0\}$ and $\{k:v_{2k+1}=0\}$ are infinite.

Recall from the proof of Theorem~\ref{thm:orbitalSareblowups}  that an infinite blowup of the preceding type corresponds to the orbital Schreier graph $(\Gamma_v,v)$.  What is more, Theorem~4.1 of~\cite{nagnibeda} identifies the orbital Schreier graphs with one end as precisely those for which both  $\{k: v_{2k}=1\}$ and $\{k:v_{2k+1}=1\}$ are infinite sets.  We conclude that the fractal blowups to which Theorem~\ref{thm:pureptspect} applies are orbital Schreier graphs with one end.

Now suppose $v$ corresponds to an orbital Schreier graph $(\Gamma_v,v)$ with one end. It is apparent that $v$ may be written using the letter combinations $\{00,01,1\}$, because strings containing an even number of zeros may be written as $(00)^j$ and those with an odd number of zeros as $(00)^j(01)$; all remaining digits are copies of $1$.  If the whole sequence were written using only $00$ and $01$ then $\{k:v_k=1\}$ would consist entirely of numbers with the same parity (all would be odd or all would be even), which is impossible because  for an orbital Schreier graph with one end both $\{k: v_{2k}=1\}$ and $\{k:v_{2k+1}=1\}$ are infinite sets.  The same argument applies if we prepend any finite word to one written using  only $00$ and $01$.  It follows that $\{k:v_k=0\}$ is infinite and $v$ corresponds to an orbital Schreier graph  with one end then we can apply   Theorem~\ref{thm:pureptspect}  and find the spectrum is pure point.

The remaining possibility for an orbital Schreier graph with one end is that $\{k;v_k=0\}$ is finite. In this case $v=w\bar{1}$ for some finite word $w$.  By Theorem~5.4(1) of~\cite{nagnibeda} all such graphs are isomorphic.  Theorem~\ref{thm:pureptspect} does not apply in this case.
\end{proof}

Since we know the orbital Schreier graph corresponding to $\bar{1}$ has a non-trivial global symmetry (which we may think of as the reflection $\Phi_0$), the following consequence is immediate.

\begin{corollary}\label{cor-key}
If the Schreier graph has one end, but does not have a global symmetry, then 
we are in the situation of 
	 the generic set of blowups specified in Theorem~\ref{thm:pureptspect}. 
	 In this case  the spectrum of $L_\infty$ is pure point, and the set of eigenvalues coincides 
	 with the set of atoms of the KNS measure. 
	 In particular, in this case 
	 the spectrum of the Laplacian is the same as the support of the KNS measure, which is a Cantor set 
    by Corollary~\ref{cor:KNSCantor}.
\end{corollary}

\bibliography{BasilicaRefs}
\bibliographystyle{plain}

\end{document}